\newtheorem{thm}{Theorem}[section]
\newtheorem{lemma}[thm]{Lemma}
\newtheorem{prop}[thm]{Proposition}
\newcommand{\tr}{\mbox{tr}}
\newcommand{\Div}{\mbox{div}}
\renewcommand{\div}{\mbox{div}}
\newcommand{\Ric}{\mbox{Ric}}
\newcommand{\R}{\mathbb R}
\theoremstyle{remark}
\newtheorem{remark}[thm]{Remark}
\newtheorem{example}[thm]{Example}
\numberwithin{equation}{section}
\newcommand{\be}{\begin{equation}}
\newcommand{\ee}{\end{equation}}
\def\p{\partial}
\def\M{\mathcal{M}}
\def\la{\langle}
\def\ra{\rangle}
\def\lf{\left}
\def\ri{\right}
\def\Pi{\displaystyle{\mathbb{II}}}
\begin{document}
\title{Deformation of Scalar Curvature and Volume} 

\author{Justin Corvino}
\address{Department of Mathematics, Lafayette College, Easton, PA 18042, USA}
\email{corvinoj@lafayette.edu}
\author{Michael Eichmair}
\address{Department of Mathematics, ETH Z\"urich, 8092 Z\"urich, Switzerland}
\email{michael.eichmair@math.ethz.ch}
\author{Pengzi Miao}
\address{School of Mathematical Sciences, Monash University, Victoria 3800, Australia}
\email{Pengzi.Miao@sci.monash.edu.au}
\curraddr{Department of Mathematics, University of Miami, Coral Gables, FL 33146, USA}
\email{pengzim@math.miami.edu}

\thanks{The first author was partially supported by the NSF through grants DMS-0707317 and DMS-1207844, and by a Simons Foundation Collaboration Grant.  The second author was partially supported by the NSF through grant DMS-0906038 and by the SNF through grant 2-77348-12.
The third author was partially supported by the ARC through grant  DP0987650 and by a 2011 Provost Research Award of the University of Miami.}
\subjclass[2010]{Primary 53C21}

\begin{abstract}
The stationary points of the total scalar curvature functional on the space of unit volume metrics on a given closed manifold are known to be precisely the Einstein metrics. One may consider the modified problem of finding stationary points for the volume functional on the space of metrics whose scalar curvature is equal to a given constant. In this paper, we localize a condition satisfied by such stationary points to smooth bounded domains. The condition involves a generalization of the \emph{static} equations, and we interpret solutions (and their boundary values) of this equation variationally.  On domains carrying a metric that does not satisfy the condition, we establish a local deformation theorem that allows one to achieve simultaneously small prescribed changes of the scalar curvature and of the volume by a compactly supported variation of the metric. We apply this result to obtain a localized gluing theorem for constant scalar curvature metrics in which the total volume is preserved. Finally, we note that starting from a counterexample of Min-Oo's conjecture such as that of Brendle-Marques-Neves, counterexamples of arbitrarily large volume and different topological types can be constructed.  
\end{abstract}

\maketitle

\section{Introduction} 

Let $M$ be a closed manifold with dimension at least three, $\mathcal{M}$ the cone of Riemannian metrics on $M$, and $\mathcal M^c\subset \mathcal{M}$ the subset of Riemannian metrics with constant scalar curvature $c$. Let $V(g)=\mbox{vol}(M,g)$ be the volume of a metric $g \in \mathcal{M}$, and let $R(g)$ be its scalar curvature.  For $c\neq 0$, critical points of the restricted volume map $V_c:\mathcal M^c\rightarrow (0,\infty)$ are precisely stationary points of the total scalar curvature $\mathcal{R}(g)=\int_M R(g)\; d\mu_g$ restricted to $\mathcal M^c$. (Note that the total scalar curvature is a topological invariant in dimension two.) Critical metrics for $V_c$ are special, as they admit non-trivial solutions $(f,\kappa)$ to the overdetermined-elliptic system $L_g^*f=\kappa g$. Here, $L_g$ is the linearization of the scalar curvature operator, $L_g^*$ is its formal adjoint, and $\kappa$ is a constant. We make this precise in Theorem \ref{thm-var}. 

In this paper, we localize the above analysis to the case where the metric deformations are supported on the closure of a bounded domain $\Omega\subset M$.  In Theorem \ref{locdef}, we show that when the metric $g$ does not admit non-trivial solutions to $L_g^*f=\kappa g$, then one can achieve simultaneously  a prescribed perturbation of the scalar curvature that is compactly supported in $\Omega$ and a prescribed perturbation of the volume by a small deformation of the metric in $\overline{\Omega}$.  

The obstruction to finding such a deformation of the metric is the existence of a non-trivial solution $(f,\kappa)$ of the system $L_g^*f =\kappa g$ on $\Omega$. If such a non-trivial solution $(f,\kappa)$ exists, we call the metric $g$ \emph{$V$-static} with \emph{$V$-static potential} $f$.  This condition is a mild generalization of the \emph{static} equation $L_g^* f=0$, cf. \cite{cor:schw}. A metric $g$ is called \emph{static} if the static equation admits a non-trivial solution $f$, in which case $f$ is called a \emph{static potential} for $g$.  In Theorem \ref{thm-var}, we provide a variational characterization of $V$-static metrics, emphasizing the role of the boundary values of a $V$-static potential.  The case where $\kappa \neq 0$ and the $V$-static potential vanishes on the boundary was studied in \cite{MiaoTam08}, where an interesting volume comparison result (stated here as Theorem \ref{thm-vcomparison-1}) was proved. We include a new proof of this result from \cite{MiaoTam08} that actually leads to a slightly stronger result. 

We now give a precise statement of the local deformation theorem. Let  $h$ be a symmetric $(0,2)$-tensor on $M$.  The linearization $L_g$ of the scalar curvature map $R:\mathcal{M}\rightarrow \mathbb{R}$ is $L_g(h)= -\Delta_g(\tr_gh)+\Div_g \Div_g h - h\cdot \Ric(g)$, and its formal $L^2$-adjoint is $L_g^*f=-(\Delta_g f)g+\nabla_g^2f-f\Ric(g)$. (Our convention is that $\Delta_g f = \text{tr}_g(\nabla^2_g f)$.) The variation of the volume map $V:\mathcal{M}\rightarrow (0,\infty)$ is  $DV_g(h)= \frac{1}{2} \int_M \tr_g(h)\; d\mu_g$.  Let $\Theta(g):=(R(g), V(g))$.  Let $\mathcal{S}_g(h)=D\Theta_g(h)=(L_g(h), DV_g(h))$.  Its formal adjoint is then $\mathcal{S}_g^*(f,a)= L_g^*f + \frac{a}{2} g$. Thus $V$-static potentials correspond precisely to non-trivial elements in the kernel of $\mathcal{S}_g^*$.  
 
The Banach spaces $\mathcal{B}_0 = \mathcal{B}_0 (\Omega) \subset C^{0, \alpha} (\text{Sym}^2(T^*\Omega)) \times \R$ and $\mathcal{B}_2 = \mathcal{B}_2 (\Omega) \subset C^{2, \alpha} (\text{Sym}^2(T^*\Omega))$ and their respective norms $||\cdot||_0$ and $||\cdot||_2$ that appear in the statement of the following theorem are introduced in Section  \ref{subsection:pointwise}. 

\begin{thm} \label{locdef} Let $(\overline \Omega, g)$ be a compact $C^{4, \alpha}$ Riemannian manifold \footnote{Given an integer $k \geq 1$ and $\alpha \in (0, 1)$, a $C^{k, \alpha}$ Riemannian manifold $(M,g)$ consists of a smooth manifold $M$, possibly with non-empty boundary, and a tensor field $g \in C^{k, \alpha} (\text{Sym}^2(T^*M))$ that is everywhere positive definite.} of dimension $n \geq 2$ with boundary. Let $\Omega$ be the manifold interior of $\overline \Omega$. Assume that the equation $\mathcal{S}^*_{g} (f, a) = 0$ has no non-trivial solutions $(f, a) \in{C}^2 ({\Omega}) \times \R$. There exist $\epsilon, C >0$ so that for any $(\sigma, \tau) \in \mathcal{B}_0$ with $||(\sigma, \tau)||_0 < \epsilon$ there is a metric $\gamma$ on $\overline \Omega$ so that $R(\gamma)=R(g)+ \sigma $, $V(\gamma)=V(g)+\tau$. In fact, $\gamma - g \in \mathcal{B}_2$ and $\|\gamma-g\|_{2} \leq C \|(\sigma, \tau)\|_{0}$. In particular, $\gamma - g$ can be extended by $0$ as a $C^{2, \alpha}$ tensor across the boundary of $\overline \Omega$.  
\end{thm}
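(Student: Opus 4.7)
The strategy is the classical adjoint (or dual) variational method developed by Corvino in \cite{cor:schw}, adapted to handle the augmented map $\mathcal{S}_g = (L_g, DV_g)$ whose codomain now includes the extra scalar factor coming from the volume. The plan is first to solve the \emph{linearized} underdetermined problem $\mathcal{S}_g(h) = (\sigma,\tau)$ with $h$ compactly supported in $\overline{\Omega}$, by minimizing a weighted quadratic functional in the adjoint variable $(f,a)$, and then to set up a nonlinear iteration whose convergence is driven by the linear estimate together with the quadratic character of the remainder $\mathcal{S}(g+h) - \mathcal{S}(g) - \mathcal{S}_g(h)$.

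For the linear problem I would look for a solution of the form $h = \rho \, \mathcal{S}^*_g(f,a) = \rho\bigl(L^*_g f + \tfrac{a}{2} g\bigr)$, where $\rho > 0$ is a positive weight vanishing to appropriate order on $\partial \Omega$ and built into the spaces $\mathcal{B}_0$ and $\mathcal{B}_2$ of Section \ref{subsection:pointwise}. Such an $h$ is produced as the minimizer of the strictly convex functional
\be
J(f,a) = \frac{1}{2} \int_\Omega \rho \, |\mathcal{S}^*_g(f,a)|^2 \, d\mu_g - \int_\Omega f \sigma \, d\mu_g - \frac{a}{2}\tau
\ee
on the completion of $C^\infty_c(\Omega) \times \R$ in the natural weighted Hilbert norm. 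The Euler--Lagrange identity for $J$ is precisely the statement that $h = \rho \, \mathcal{S}^*_g(f,a)$ is a weak solution of $\mathcal{S}_g(h) = (\sigma,\tau)$. Since $L^*_g$ is overdetermined elliptic and $L_g L^*_g$ is fourth-order elliptic on scalars, weighted Schauder bootstrapping upgrades this weak solution to $h \in \mathcal{B}_2$ with $\|h\|_2 \leq C \|(\sigma,\tau)\|_0$.

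The main obstacle, and the heart of the argument, is the coercivity (``basic'') estimate
\be
\|f\|_{H^2_\rho(\Omega)}^2 + a^2 \leq C \int_\Omega \rho \, |\mathcal{S}^*_g(f,a)|^2 \, d\mu_g,
\ee
which simultaneously yields existence of the minimizer and controls the resulting $h$. Interior overdetermined ellipticity of $\mathcal{S}^*_g$ supplies a local estimate for the second derivatives of $f$ without zeroth-order terms, and the hypothesis that $\mathcal{S}^*_g$ has no nontrivial kernel on $\Omega$ allows the zeroth-order terms in $f$ and the parameter $a$ to be absorbed by the standard compactness-and-contradiction argument: a sequence violating the estimate, after normalization, would subconverge to a nontrivial kernel element of $\mathcal{S}^*_g$ (whose regularity is guaranteed by interior overdetermined ellipticity), contradicting the hypothesis. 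The weighted version, in which $\rho$ degenerates at $\partial \Omega$, is delicate and must be handled by a careful choice of weight together with weighted elliptic estimates near the boundary, in the spirit of Corvino's original construction. The additional scalar unknown $a$ introduces no new difficulty: it couples to $f$ only through the term $\tfrac{a}{2} g$ and is carried along as a finite-dimensional parameter.

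With the linear estimate in hand, the nonlinear problem is solved by iteration. Writing $\mathcal{S}(g+h) - \mathcal{S}(g) = \mathcal{S}_g(h) + Q_g(h)$ with $\|Q_g(h)\|_0 \leq C \|h\|_2^2$ for $\|h\|_2$ small, I would set $h_0 = 0$ and define $h_{k+1}$ inductively as the solution provided by the linear step of $\mathcal{S}_g(h_{k+1}) = (\sigma,\tau) - Q_g(h_k)$. The smallness hypothesis on $\|(\sigma,\tau)\|_0$ ensures that the $h_k$ remain in a small ball in $\mathcal{B}_2$, and the quadratic bound on $Q_g$ makes $(h_k)$ Cauchy in $\mathcal{B}_2$. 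The limit $h \in \mathcal{B}_2$ satisfies $\mathcal{S}(g+h) = \mathcal{S}(g) + (\sigma,\tau)$, i.e., $R(\gamma) = R(g) + \sigma$ and $V(\gamma) = V(g) + \tau$ for $\gamma = g+h$, and because $h \in \mathcal{B}_2$ vanishes to sufficient order at $\partial \Omega$, it extends by zero as a $C^{2,\alpha}$ tensor across the boundary, as claimed.
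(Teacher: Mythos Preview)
Your proposal is correct and follows essentially the same path as the paper's proof in Section~3: the weighted variational construction of $h=\rho\,\mathcal{S}_g^*(f,a)$ via the coercivity estimate for $\mathcal{S}_g^*$ (obtained by the compactness-and-contradiction argument you describe, then promoted to the weighted version), the interior weighted Schauder estimates for $\rho^{-1}L_g\rho L_g^*$, and a nonlinear iteration driven by the quadratic remainder bound $\|Q_g(h)\|_0\lesssim\|h\|_2^2$. Two cosmetic remarks: the linear term in your functional $J$ should be $-a\tau$ rather than $-\tfrac{a}{2}\tau$ (otherwise the Euler--Lagrange equation yields $DV_g(h)=\tfrac{\tau}{2}$ instead of $\tau$), and the paper sets up the iteration incrementally---adding successive small corrections $h_m$ to $\gamma_m=g+\sum_{p<m}h_p$---rather than via your Picard map $h_{k+1}=G\bigl((\sigma,\tau)-Q_g(h_k)\bigr)$, but the two schemes are equivalent here.
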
 

The following version of Theorem \ref{locdef} includes the dependence on the metric and higher order regularity:

\begin{thm} \label{locdefTake2} Let $k \geq 4$. Let $(\overline \Omega, g_0)$ be a compact $C^{k, \alpha}$ Riemannian manifold of dimension $n \geq 2$ with boundary, and let $\Omega$ be the manifold interior of $\overline \Omega$. Assume that the equation $\mathcal{S}^*_{g_0} (f, a) = 0$ has no non-trivial solutions $(f, a) \in{C}^2 (\Omega) \times \R$. Let $\Omega_0 \subset \Omega$ be a non-empty open set that is compactly contained in $\Omega$. There exists an open neighborhood $U$ of $g_0$ in $C^{k, \alpha}(\overline \Omega)$ and $\epsilon, C>0$ such that for any $g \in U$, $\tau \in \mathbb{R}$, and $\sigma \in C^{k-4, \alpha}(\overline \Omega)$ with support in $\Omega_0$ and with $|\tau| + ||\sigma||_{C^{k-4, \alpha}} < \epsilon$, there is a $C^{k-2, \alpha}$ metric $\gamma$ on $\overline \Omega$ so that $\mathrm{supp}(\gamma - g)$ is compactly contained in $\Omega$, such that $||\gamma - g|| _{C^{k-2, \alpha}} \leq C \left( |\tau| + ||\sigma||_{C^{k-4, \alpha}}\right)$, and such that  $R(\gamma)=R(g)+ \sigma $, $V(\gamma)=V(g)+\tau$.  If $g$ and $\sigma$ are smooth, we can arrange for $\gamma$ to be smooth as well.  \end{thm}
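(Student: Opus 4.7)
My plan is to bootstrap Theorem \ref{locdef} while tracking the additional parameters in the hypotheses. The argument for Theorem \ref{locdef} seeks $\gamma$ of the form $g + \mathcal{S}_g^*(f,a)$ (with a weight built into the ansatz), so that the linearization of the equation $\Theta(\gamma) = \Theta(g) + (\sigma, \tau)$ becomes the fourth-order, formally self-adjoint elliptic system $\mathcal{S}_g \mathcal{S}_g^*$ on weighted H\"older spaces. Invertibility of this linear problem follows from the trivial-kernel hypothesis on $\mathcal{S}_g^*$ combined with weighted elliptic estimates, and the nonlinear problem is then solved by the inverse function theorem (equivalently, by a contraction mapping).

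To prove Theorem \ref{locdefTake2}, I would first observe that the linear picture is robust under small $C^{k,\alpha}$ perturbations of the metric. The operator $\mathcal{S}_g^*$ depends continuously on $g$, trivial-kernel is an open condition (argued by contradiction, extracting via compact embedding a non-trivial limiting solution on $\Omega$ that violates the hypothesis for $g_0$), and the resulting weighted coercivity estimates hold uniformly for $g$ in a $C^{k,\alpha}$-neighborhood $U$ of $g_0$. Consequently, the inverse function theorem applies uniformly for $g \in U$ and produces a $C^{2, \alpha}$ solution $\gamma$ with $\|\gamma - g\|_{C^{2, \alpha}} \leq C (|\tau| + \|\sigma\|_{C^{0,\alpha}})$.

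To get the compact support and higher regularity, I would choose the weight in the ansatz so that it is supported in a slightly larger open set $\Omega_1$ with $\Omega_0 \subset\subset \Omega_1 \subset\subset \Omega$. Since $\sigma$ vanishes outside $\Omega_0$ and the tensor $\mathcal{S}_g^*(f,a)$ carries the weight factor, the resulting $\gamma - g$ is automatically supported in $\overline{\Omega_1}$, hence compactly in $\Omega$. Higher regularity follows from standard interior elliptic bootstrapping applied to the fourth-order equation satisfied by $(f,a)$ on $\Omega_1$, where the weight stays positive: with $\sigma \in C^{k-4, \alpha}$ and $g \in C^{k, \alpha}$ one gets $(f,a) \in C^{k-2, \alpha}$ on $\overline{\Omega_1}$, hence $\gamma - g \in C^{k-2, \alpha}$. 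The smooth case follows by iteration on $k$.

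The main technical obstacle is upgrading the coercivity of $\mathcal{S}_g^*$ from the single metric $g_0$ to a neighborhood $U$ in the weighted H\"older framework. A careful compactness argument is needed: assuming a violating sequence $g_n \to g_0$ with normalized $(f_n, a_n)$, one must extract a limit $(f_\infty, a_\infty)$ that is genuinely non-trivial as an element of $C^2(\Omega) \times \R$ and satisfies $\mathcal{S}_{g_0}^*(f_\infty, a_\infty) = 0$, contradicting the hypothesis. The delicate point is ensuring non-triviality is not lost at $\p\Omega$ under the weighted norms, which requires combining interior elliptic estimates with the structure of the weight. Once this uniform coercivity is in place, the nonlinear iteration, the estimate, and the regularity statements all go through by standard arguments.
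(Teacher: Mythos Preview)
Your overall strategy coincides with the paper's: the ansatz $\gamma - g = \rho\,\mathcal{S}_g^*(u,a)$, uniform coercivity for $g$ near $g_0$ (the paper's Remark~\ref{rem:nearbymetrics}), and shrinking the working domain to some $\Omega_\delta \supset \Omega_0$ to obtain compact support. One point you omit: after shrinking you must still verify that $\Omega_\delta$ is not $V$-static; the paper notes this follows from the kernel-stabilization argument in the proof of Proposition~\ref{prop:coest}.

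Your regularity paragraph has a derivative-counting slip and a more substantive gap. For the count: the fourth-order equation with right side in $C^{k-4,\alpha}$ gives $u \in C^{k,\alpha}$ (in the weighted sense), not $C^{k-2,\alpha}$; the drop to $C^{k-2,\alpha}$ occurs when you apply the second-order operator $\mathcal{S}_g^*$ to form $h$. For the gap: ``standard interior bootstrapping where the weight stays positive'' does not yield regularity of $u$ on $\overline{\Omega_1}$, because the weight $\rho$ vanishes on $\partial\Omega_1$ and $u$ may well blow up there. The paper instead uses the \emph{weighted} Schauder estimates of Appendix~\ref{sec:Schauder} (estimate~\eqref{wtschk}) for the quasi-linear operator $\hat P(u)=\rho^{-1}(R(g+\rho\mathcal{S}_g^*(u,a))-R(g))$ to place $u$ in $C^{k,\alpha}_{\phi,\phi^{n/2}\rho^{1/2}}$; it is then $h = \rho\,\mathcal{S}_g^*(u,a)$, not $u$, that extends by zero in $C^{k-2,\alpha}$ across the boundary, precisely because the exponential decay $\rho \sim e^{-1/d}$ beats the polynomial growth allowed for $u$ and its derivatives. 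The specific choice of weight is essential here, and ordinary interior estimates do not suffice.
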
 

Now we present several examples to illustrate the static and $V$-static conditions.  Our convention is that $\lambda$ is an eigenvalue for a Schr\"odinger operator $T=\Delta_g + w$ with non-zero eigenfunction $u$ if $T(u)= -\lambda u$. 

\begin{example} \label{example_Einstein} Recall that a metric $g$ is critical for $V_{-1}$ on a closed manifold $M$ precisely when $g$ is Einstein with $R(g)=-1$.  We show that this is equivalent to being $V$-static with $R(g)=-1$.  Indeed, in the Einstein case, $(f, \kappa)=(1, 1/n)$ satisfies the critical equation $L_{g}^* f = \kappa g$.  Conversely, if $R(g)=-1$ and if $(M,g)$ admits a non-trivial solution $(f,\kappa)$ of $L_g^*f=\kappa g$, we first obtain that $-(n-1)\Delta_g f + f = n\kappa$ by taking the trace. Since $((n-1)\Delta_g - 1)$ is invertible, we conclude that $\kappa\neq 0$ and that $f=n\kappa$ is a solution. Plugging this back into the critical equation we obtain that  $-n\kappa \Ric(g)= \kappa g$. Thus $g$ is Einstein.  

The same argument shows that  when $R(g)=1$ and $\frac{1}{n-1}$ is not an eigenvalue of the Laplacian, then $g$ is $V$-static if and only if $g$ is Einstein.  \end{example}

\begin{example} \label{example_Ricciflat} A scalar-flat $V$-static metric $g$ on a closed manifold $M$ is Ricci-flat. To see this, let $(f, \kappa)$ be a non-trivial solution of $L^*_g f = \kappa g$. Taking the trace of this equation, we see that $\kappa = 0$ and that $f$ is equal to a non-zero constant. Using this information in the equation $L^*_g f = \kappa g$, we obtain that $(M, g)$ is Ricci-flat. Conversely, every Ricci-flat metric on $M$ is $V$-static and the space of solutions $(f, \kappa)$ of $L^*_g f = \kappa g$ is spanned by $(1, 0)$. Scaling changes the volume of a Ricci-flat metric at a nonzero rate while leaving the metric Ricci-flat. Therefore a Ricci-flat metric cannot be critical for $V_0$. 
\end{example}

\begin{example} \label{example:staticnotcritical} Consider the metric $g = (n-2)^{-1}g_{\mathbb{S}^1} + g_{\mathbb{S}^{n-1}}$ on $M = \mathbb{S}^1\times \mathbb{S}^{n-1}$ where $n \geq 3$. Then $f(t, \omega)=\sin(t)$ is a static potential for $g$. Clearly, scaling the $\mathbb{S}^1$-factor preserves the scalar curvature while the total volume changes. Thus $g$ is not a critical point for the volume functional on $\mathcal{M}^{(n-1)(n-2)}$. 
\end{example}

To summarize the above discussion, let $\mathcal{K}$ be the space of $V$-static metrics $g$ on a closed connected manifold $M$ of dimension at least three. 
This space contains all Einstein metrics and all metrics that are static. By Theorem A in \cite{ob}, a metric which is Einstein and static is either Ricci-flat or a round sphere. We can write $\mathcal K$ as a disjoint union $\mathcal{K}=\mathcal{K}_+\cup \mathcal{K}_0 \cup \mathcal{K}_-$ according to the sign of the constant scalar curvature $R(g)=c$, cf. Proposition \ref{prop:csc}. By Example \ref{example_Einstein}, the space $\mathcal{K}_-$ consists precisely of the Einstein metrics of negative scalar curvature. None of these metrics is static. By Example \ref{example_Ricciflat}, $\mathcal{K}_0$ is the space of Ricci-flat metrics, all of which are static, and none of which are critical for $V_0$.  The structure of $\mathcal{K}_+$ is more complicated.  $\mathcal{K}_+$ consists of metrics that are critical for $V_c$, e.g. the Einstein metrics of positive scalar curvature, and static metrics that are not critical for $V_c$, cf. with Example \ref{example:staticnotcritical}.  Static metrics in $\mathcal{K}_+$ admit $\frac{c}{n-1}$ in the spectrum of the Laplacian, such as the sphere  ($c=n(n-1)$) and $\mathbb{S}^1\times \mathbb{S}^{n-1}$ ($c=(n-1)(n-2)$). Further examples have been found by Kobayashi and Lafontaine in \cite{sh}.  If $\frac{c}{n-1}$ is not in the spectrum of the Laplacian of a metric in $\mathcal K_+$, then the metric is Einstein and non-static, for example $\mathbb {RP}^n$.


As our first application of Theorems \ref{locdef} and Theorem \ref{locdefTake2}, we establish a gluing result which is largely inspired by those in \cite{cip,imp,imp:flds}.  The result gives a condition that guarantees that two metrics with the same constant scalar curvature can be glued together to produce a metric with the same constant scalar curvature, preserving both the total volume and the original metrics outside a specified region.  
\begin{thm} \label{glue}
Fix  $n\geq 3$ and $k \geq 4$. Let $\sigma_n\in \{ -n(n-1), 0, n(n-1)\}$.  Let $(M_1, g_1)$ and $(M_2, g_2)$ be two compact $C^{k, \alpha}$ Riemannian manifolds such that $R(g_1)=\sigma_n=R(g_2)$. Assume that each $(M_i, g_i)$ contains a non-empty smooth domain $U_i\subset \mathrm{int}(M_i)$ where $g_i$ is not $V$-static. There exists a $C^{k-2, \alpha}$ metric $g$ on the connected sum $M_1\# M_2\supset (M_1\setminus U_1)\sqcup (M_2\setminus U_2)$ such that $R(g)=\sigma _n$, $\mathrm{vol}(M_1\# M_2, g)= \mathrm{vol}(M_1,g_1)+\mathrm{vol}(M_2,g_2)$, and $g= g_i$ on $M_i\setminus U_i$, $i=1,2$.  If $(M_1, g_1)$ and $(M_2, g_2)$ are smooth, then we can find $(M_1\# M_2, g)$ smooth with these properties. 
\end{thm}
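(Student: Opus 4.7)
The plan follows the standard connected-sum strategy (compare \cite{cip,imp,imp:flds}): first construct an approximate glued metric $g_\delta$ on $M_1 \# M_2$ whose scalar curvature is close to $\sigma_n$ and whose total volume is close to $V(g_1) + V(g_2)$, then apply Theorem \ref{locdefTake2} to correct both scalar curvature and volume exactly via a perturbation supported in a region containing $U_1$ and the neck.

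To set up the approximate metric, I would pick interior points $p_i \in M_i \setminus \overline{U_i}$, excise geodesic balls of radius $\delta$ around each $p_i$, and join the resulting boundary spheres through a short rotationally symmetric neck. For each of the three admissible values of $\sigma_n$ there is a natural constant-scalar-curvature model: a scaled round sphere for $\sigma_n = n(n-1)$, an $n$-dimensional Riemannian Schwarzschild metric for $\sigma_n = 0$, and a Schwarzschild--anti-de Sitter model for $\sigma_n = -n(n-1)$. After a careful (conformal) cut-off interpolation between $g_i$ and the CSC neck model on an annular region, one produces a $C^{k,\alpha}$ metric $g_\delta$ on $M_1 \# M_2$ (smooth if both $g_i$ are smooth) with $\sigma := \sigma_n - R(g_\delta)$ compactly supported in the interpolation annulus, and with both $\|\sigma\|_{C^{k-4,\alpha}}$ and $|\tau| := |V(g_1) + V(g_2) - V(g_\delta)|$ tending to zero as $\delta \to 0$.

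Next, I would fix a connected bounded open region $\Omega$ compactly contained in $\mathrm{int}(M_1 \# M_2)$ that contains $U_1$, the neck, and the interpolation annulus. The key structural observation is that $g_\delta$ admits no non-trivial $V$-static potential on $\Omega$: since $g_\delta$ coincides with $g_1$ on $U_1$, any $(f, a)$ solving $\mathcal{S}^*_{g_\delta}(f, a) = 0$ on $\Omega$ restricts to a $V$-static potential for $g_1$ on $U_1$; taking the trace on $U_1$ forces $a = 0$ and then $f|_{U_1} \equiv 0$. Unique continuation for the overdetermined elliptic equation $L^*_{g_\delta} f = 0$ (compare \cite{cor:schw}) on the connected open set $\Omega$ then yields $f \equiv 0$ on $\Omega$. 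One may therefore apply Theorem \ref{locdefTake2} with reference metric $g_\delta$ on $\overline{\Omega}$ and $\Omega_0 \subset \Omega$ any open set compactly containing $\mathrm{supp}(\sigma)$. For $\delta$ sufficiently small the smallness condition $|\tau| + \|\sigma\|_{C^{k-4,\alpha}} < \epsilon$ is satisfied, producing a $C^{k-2,\alpha}$ metric $\gamma$ on $\overline{\Omega}$ that agrees with $g_\delta$ near $\partial \Omega$ and satisfies $R(\gamma) = \sigma_n$ and $V(\gamma) = V(g_\delta) + \tau = V(g_1) + V(g_2)$. Extending by $g_\delta$ outside $\Omega$ yields the claimed glued metric, which equals $g_i$ on $M_i \setminus U_i$; the smoothness assertion transfers from Theorem \ref{locdefTake2}.

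The main obstacle is the approximation step: a naive cut-off connected sum only gives $O(1)$ control on $R(g_\delta) - \sigma_n$ in $L^\infty$ on the neck, which cannot be absorbed by the local deformation theorem. The restriction $\sigma_n \in \{-n(n-1), 0, n(n-1)\}$ is precisely what allows the use of standard rotationally symmetric exact CSC model necks that make the approximation quantitatively small in $C^{k-4,\alpha}$. A secondary technical point is to verify that the smallness threshold $\epsilon$ from Theorem \ref{locdefTake2}, applied with $g_0 = g_\delta$, does not degenerate faster than the approximation defect $(\sigma, \tau)(\delta)$ shrinks; this requires either a weighted-norm analysis of the linearization or choosing the neck geometry so that the $C^{k,\alpha}$ size of $g_\delta$ on $\overline{\Omega}$ remains bounded, so that a single sufficiently small $\delta$ can be selected.
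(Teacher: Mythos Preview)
There are two genuine gaps.

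First, you place the gluing points $p_i \in M_i \setminus \overline{U_i}$. This is incompatible with the conclusion: the connected sum removes neighborhoods of the $p_i$, so if $p_i \notin U_i$ then $M_i \setminus U_i$ is not even contained in $M_1 \# M_2$, and the requirement $g = g_i$ on $M_i \setminus U_i$ cannot be met. One must take $p_i \in U_i$ so that all surgery and deformation is confined to $U_1 \cup U_2$. Once you do this, $g_\delta$ no longer agrees with $g_1$ on all of $U_1$, so your non-$V$-static/unique-continuation argument must start from an annular subregion; that $g_1$ remains non-$V$-static on $U_1 \setminus \overline{B_{g_1}(p_1,\rho/2)}$ for small $\rho$ requires the finite-dimensional stabilization argument from the proof of Proposition~\ref{prop:coest}, which you do not invoke.

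Second, the ``secondary technical point'' you flag is in fact the crux, and your outline does not resolve it. If $\Omega$ contains the neck, then as the gluing parameter tends to its limit the pair $(\overline\Omega, g_\delta)$ does not lie in any fixed $C^{k,\alpha}$ neighborhood of a fixed non-$V$-static reference metric, so Theorem~\ref{locdefTake2} yields no uniform $\epsilon$. (Your claim $\|\sigma\|_{C^{k-4,\alpha}} \to 0$ is likewise unjustified: a direct CSC model neck matches the flat model, not $g_i$, on a fixed annulus, and on a shrinking annulus the cutoff derivatives blow up.) The paper circumvents both issues by decoupling the two steps. It first invokes a full IMP-type conformal gluing, Theorem~\ref{thm-imp-strenthed}, on the balls $\Sigma_i = \overline{B_{g_i}(p_i,\rho)} \subset U_i$ to obtain metrics $\hat g_T$ on $\Sigma_1 \# \Sigma_2$ with $R(\hat g_T) = \sigma_n$ \emph{exactly}, together with $\hat g_T \to g_i$ in $C^{k,\alpha}$ on the fixed outer annuli and the stated volume convergence. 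One then cuts off from $\hat g_T$ back to $g_i$ on these fixed annuli and applies Theorem~\ref{locdefTake2} separately on each fixed domain $U_i \setminus \overline{B_{g_i}(p_i,\rho/2)}$, where the interpolated metric $\tilde g_T$ is $C^{k,\alpha}$-close to the fixed non-$V$-static reference $g_i$. The neck never enters the domain on which the local deformation theorem is invoked, so the threshold $\epsilon$ is uniform in $T$.
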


There are many gluing results for constant scalar curvature and, more generally, the Einstein constraint equations in the literature. Gromov-Lawson \cite{gromlaw} and Schoen-Yau \cite{sy:psc} used different methods to prove that the existence of a positive scalar curvature metric on a manifold is preserved under surgeries of co-dimension at least three.  The seminal paper of Schoen \cite{singyam} on the singular Yamabe problem on the sphere has inspired a large number of works on scalar curvature gluing constructions. The resolution of the Yamabe problem shows that the connected sum of two closed manifolds admits a metric of constant scalar curvature in the conformal class of any metric on the sum. It is interesting and important  to understand in what way the constant scalar curvature metric on the sum can be made to reflect the geometry of the original summands.  Joyce \cite{joyce} produced constant scalar curvature metrics on connected sums of closed manifolds by constructing approximate solutions on the joined manifolds by hand, and then solving for a conformal deformation to constant scalar curvature. He also described the geometry of the resulting configuration.  A difference in Theorem \ref{glue} (as in \cite[Theorem 1.2]{cip}) is that we use a deformation out of the conformal class to preserve the initial metrics away from the gluing region. In particular, we note that in \cite{joyce}, the resulting metric on the connected sum of two zero scalar curvature metrics has constant negative scalar curvature.  

The conformal part of the proof of Theorem \ref{glue} follows closely the works \cite{imp, imp:flds}, see also \cite{mpu}, on gluing constructions for the Einstein constraint equations.  An important observation for localized gluing was made by Chru\'sciel-Delay \cite{cd}. They noticed that the conformal constructions could be combined with the localized deformation technique of Corvino-Schoen \cite{cor:schw, cs:ak} to produce, under certain non-degeneracy conditions, solutions to the Einstein constraint equations on connected sums for which the original data is left unchanged outside the gluing region. We refer to \cite[Theorem 1.2]{cip} for an analogue of our Theorem \ref{glue} in the case $\sigma_n\leq 0$.  A gluing construction for constant positive scalar metrics was obtained by Chru\'sciel-Pacard-Pollack \cite{cpp}.  An overview of these constructions with additional references is given in \cite[Sections 5.2-5.3]{corpol}. We also refer the reader to the recent work of Delay \cite{delay}.  

In the final section of this paper, we note how connect-sum constructions for scalar curvature can be combined with the recent counterexample to Min-Oo's conjecture
by Brendle, Marques, and Neves \cite{brendle-marques-neves} to produce counterexamples of different topological types and of large volume. Such examples are interesting in light of the recent results in \cite{MiaoTamMinOo}.

\subsection{Acknowledgments} The authors would like to thank S. Brendle, R. Mazzeo, D. Pollack, R. M. Schoen and L.-F. Tam for useful discussions on various aspects of this work. 


 \section{Variational characterization of $V$-static metrics}  
 
Let $(\overline \Omega, g)$ be a connected  $n$-dimensional compact $C^3$ Riemannian manifold with boundary, and let $\Omega$ be the manifold interior of $\overline \Omega$. We say that $(\overline \Omega, g)$ is \emph{$V$-static} (or simply that the metric $g$ is $V$-static) if the equation \begin{equation} \label{eqn:Vstatic} \mathcal{S}_g^*(f, a) = 0 \text{ on } \Omega \end{equation} admits a non-trivial weak solution $(f, a) \in H^1_{\mathrm{loc}} (\Omega) \times \R$;  $f$ is then called a \emph{$V$-static potential}. We will see in Proposition \ref{prop-bdryexten} that every solution $(f, a) \in H^1_{\mathrm{loc}} (\Omega) \times \R$ of (\ref{eqn:Vstatic}) is actually in $C^2(\overline \Omega) \times \R$. The goal of this section is to study properties of $V$-static metrics and to characterize the boundary values of $V$-static potentials. 


\subsection{The kernel of $\mathcal S_g^*$}   \label{sec:ker} 
The equation 
\begin{eqnarray} \label{eqn:statickappa} 
L_g^* f = \kappa g \text{ on } \Omega\end{eqnarray} 
is equivalent to (\ref{eqn:Vstatic}) with $a = - 2 \kappa$. Given $\kappa \in \R$, $L_g^*f = \kappa g$ is an overdetermined elliptic system for $f$. It is well-known how to re-cast (\ref{eqn:statickappa}) into a proper elliptic system for $(f,g)$ in appropriate coordinates (e.g. harmonic coordinates), cf. \cite{be} or \cite[p. 145-146]{cor:schw}.  In such coordinates, then, $f$ and $g$ are analytic. It follows that if $(\overline \Omega, g)$ is $V$-static, then so is any subdomain (with the restricted metric); this also follows from the proof of Proposition \ref{prop:csc}. 
 
 The following property of $V$-static metrics follows as in \cite[Proposition 2.3]{cor:schw}, see also \cite[Theorem 7 (i)]{MiaoTam08}.

\begin{prop} \label{prop:csc}
Assume that for some constant $\kappa \in \R$ there exists a non-trivial weak solution $f \in H^{1}_{\mathrm{loc}} (\Omega)$ of  \eqref{eqn:statickappa}. Then $g$ has constant scalar curvature. 
\end{prop}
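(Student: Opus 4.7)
The plan is to take the divergence of the equation $L_g^* f = \kappa g$ and exploit a classical second-Bianchi-type identity that expresses $\mathrm{div}_g(L_g^* f)$ in terms of $f$ and $dR(g)$. Since $\mathrm{div}_g(\kappa g) = 0$, this will force $f\, dR(g) = 0$, and the argument will conclude by invoking analyticity of $f$.

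First, I would upgrade the regularity of $f$. Although $f$ is given only as an $H^1_{\mathrm{loc}}$ weak solution, the authors have already observed that in harmonic coordinates the overdetermined system $L_g^* f = \kappa g$ can be recast as a determined elliptic system for the pair $(f,g)$. By elliptic regularity and analytic hypoellipticity, both $g$ and $f$ are real-analytic on $\Omega$ in such coordinates. In particular, if $f$ were to vanish on an open subset of $\Omega$, then $f$ would vanish identically, contradicting non-triviality. Hence the zero set $\{f = 0\}$ has empty interior in $\Omega$.

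Next, I would establish the identity
\[
\mathrm{div}_g (L_g^* f) = -\tfrac{1}{2}\, f\, dR(g).
\]
This is the formal adjoint expression of the diffeomorphism invariance of $\int R\, d\mu$. Concretely, for any compactly supported vector field $X$ on $\Omega$, the chain of equalities
\[
-2 \int_\Omega \langle \mathrm{div}_g(L_g^* f),\, X \rangle\, d\mu_g = \int_\Omega \langle L_g^* f,\, \mathcal{L}_X g\rangle\, d\mu_g = \int_\Omega f\, L_g(\mathcal{L}_X g)\, d\mu_g = \int_\Omega f\, \langle X, \nabla R(g)\rangle\, d\mu_g,
\]
together with the arbitrariness of $X$, yields the identity. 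The first equality uses integration by parts applied to a symmetric $2$-tensor against a Lie derivative of the metric, the second is the definition of $L_g^*$ as the formal adjoint of $L_g$, and the third uses that $L_g(\mathcal{L}_X g) = \mathcal{L}_X R(g)$ since scalar curvature is natural under diffeomorphisms.

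Applying this identity to the equation $L_g^* f = \kappa g$ and using $\mathrm{div}_g g = 0$, I obtain $f\, dR(g) = 0$ pointwise on $\Omega$. Since $\{f = 0\}$ has empty interior, $dR(g)$ vanishes on a dense open subset of $\Omega$, and by continuity $dR(g) \equiv 0$ on $\Omega$. The connectedness of $\Omega$ then forces $R(g)$ to be constant.

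The main obstacle I anticipate is the careful justification of the Bianchi-type divergence identity at the regularity available. Since $(f,g)$ is analytic in harmonic coordinates, one may equally well perform the computation in such coordinates and obtain the identity pointwise, which sidesteps any regularity issue in the weak formulation; this is how I would present it to keep the argument clean.
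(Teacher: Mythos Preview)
Your proof is correct. Both your argument and the paper's begin by taking the divergence of $L_g^* f = \kappa g$ to obtain $f\, dR(g) = 0$; you derive this identity softly via diffeomorphism invariance of the scalar curvature, while the paper computes it directly from the contracted Bianchi identity and the Ricci commutation formula. The genuine difference lies in how one passes from $f\, dR(g) = 0$ to $dR(g) \equiv 0$. You invoke the analyticity of $f$ (stated in the paragraph preceding the proposition) to conclude that $\{f=0\}$ has empty interior. The paper instead observes that along any unit-speed geodesic the equation $L_g^* f = \kappa g$ restricts to a linear second-order ODE for $t \mapsto f(\gamma(t))$, so that $f$ cannot vanish on a nonempty open set (and in the homogeneous case $\kappa=0$ the zero set actually has codimension one). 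The ODE argument is more elementary and self-contained---it avoids the harmonic-coordinate analytic-regularity machinery---and it delivers extra information the paper uses immediately afterward: the codimension-one structure of $f^{-1}(0)$ and the bound $\dim\ker L_g^* \le n+1$. Your analyticity route is shorter but leans on heavier regularity theory already quoted in the text.
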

 \begin{proof} By elliptic regularity, $f \in C^2(\Omega) \cap H^3_{\mathrm{loc}} (\Omega)$. Taking the divergence of the equation $L_g^*f=\kappa g$ and using the Bianchi identity and the Ricci formula, it follows that $fdR(g)=0$. Along a unit-speed geodesic $\gamma$ with $\gamma (0) = p$, the equation $L_g^*f=\kappa g$ reduces to a second-order ODE with initial data $(f(p), df (\gamma'(0)))$.  Indeed, if $h(t)=f(\gamma(t))$, then $$h''(t)= \nabla^2_g f (\gamma'(t), \gamma'(t)) = \Big(\Ric(g)(\gamma'(t), \gamma'(t)) - \frac{1}{n-1}\Big) h(t) - \frac{\kappa}{n-1}.$$ In the homogeneous case $\kappa=0$, observe that if $f$ has a zero that is a critical point, then $f$ is identically zero.  Thus the zero set of $f$ has codimension one. It follows that $d R(g) = 0$ so that the scalar curvature $R(g)$ is constant.  If $\kappa\neq 0$, then a solution of the (inhomogeneous) ODE cannot vanish identically in a non-empty open set, from which we can again conclude that $R(g)$ is constant.  \end{proof} 
 
The ODE argument in the proof of Proposition \ref{prop:csc} shows that the kernel of $L_g^*$ has dimension at most $(n+1)$.  Thus, the dimension of the kernel of $\mathcal{S}_g^*$ is at most $(n+2)$. This maximal dimension is achieved, for example, by the standard metric on the sphere $\mathbb S^n$. Viewing $\mathbb S^n$ as the unit sphere in $\R^{n+1}$ with center at the origin, the kernel is spanned by $(x^j|_{\mathbb S^n}, 0)$, $j=1, \ldots, n+1$, and $(1, 2(n-1))$ in this case. 
 
By employing the exponential map from points near the boundary and using basic facts about existence, uniqueness, and dependence on initial data for ODEs as in \cite[Proposition 2.5]{cor:schw}, we see that every solution $f$ of \eqref{eqn:statickappa} extends to the boundary as a $C^2$ function; using an interior elliptic estimate, or appealing to the finite-dimensionality of the kernel, we also obtain an estimate on such solutions:

\begin{prop}  \label{prop-bdryexten}
Every weak solution $ f  \in  H^{1}_{\mathrm{loc}} (\Omega)$ of \eqref{eqn:statickappa} is actually in $C^2(\overline \Omega)$. There is a constant $C=C(\Omega, g)$ so that for $\epsilon >0$ sufficiently small, and for any solution $f$ of \eqref{eqn:statickappa}, $||f||_{C^2(\overline \Omega)} \leq C ||f||_{H^1(\Omega_\epsilon)}$, where $\Omega_\epsilon  =\{x \in \Omega : d(x,\partial \Omega) > \epsilon\}$.
\end{prop}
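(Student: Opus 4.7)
The plan is to treat this in two parts: (i) establish that $f$ extends as a $C^2$ function to $\overline \Omega$, and (ii) derive the estimate from the finite dimensionality of the kernel of $\mathcal S_g^*$. For (i), I would first get interior regularity. Taking the trace of \eqref{eqn:statickappa} gives the linear elliptic equation
\[
-(n-1)\Delta_g f - R(g)f = n\kappa,
\]
to which Schauder bootstrapping applies (the coefficients are $C^1$ since $g \in C^3$), upgrading any weak solution $f \in H^1_{\mathrm{loc}}(\Omega)$ to $f \in C^2(\Omega)$.

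To extend $f$ across $\partial \Omega$, I would combine the trace equation with \eqref{eqn:statickappa} to obtain the pointwise identity
\[
\nabla^2_g f = -\frac{\kappa + R(g)\, f}{n-1}\, g + f\, \Ric(g),
\]
so that along any unit-speed geodesic $\gamma$ the function $h(t) = f(\gamma(t))$ satisfies the second-order linear ODE used in the proof of Proposition \ref{prop:csc}. Given $p \in \partial \Omega$, choose an interior base point $q \in \Omega$ close to $p$ so that $\exp_q$ is a $C^2$ diffeomorphism from a neighborhood of $0 \in T_q\Omega$ onto a neighborhood $U \ni p$ in $\overline \Omega$. Every $x \in U$ has a unique representation $x = \exp_q(v)$, and $f(x) = h_v(1)$, where $h_v$ solves the ODE along $\gamma_v(t) = \exp_q(tv)$ with initial data $(f(q), df_q(v))$. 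Smooth dependence of ODE solutions on parameters, together with the regularity of the exponential map, yields $f \in C^2(U)$. Covering $\partial \Omega$ by finitely many such neighborhoods and observing that any two extensions agree on the interior overlap (where they coincide with the original $f$) produces the claimed $C^2$ extension.

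For the estimate, the same ODE reduction shows that the kernel $\mathcal K$ of $\mathcal S_g^*$ is finite-dimensional: any solution $(f, \kappa)$ is determined by the finite data $(f(q), df_q, \kappa)$ at a single interior base point $q$, so $\dim \mathcal K \leq n+2$. For $\epsilon > 0$ small enough that $\Omega_\epsilon$ is a non-empty open set, the linear evaluation map $\mathcal K \to H^1(\Omega_\epsilon)$, $(f, \kappa) \mapsto f|_{\Omega_\epsilon}$, is injective: if $f \equiv 0$ on $\Omega_\epsilon$, then the unique-continuation argument from the proof of Proposition \ref{prop:csc} forces $f \equiv 0$ on $\Omega$, and then $\kappa g = L_g^* f = 0$ gives $\kappa = 0$. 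Since all norms on the finite-dimensional space $\mathcal K$ are equivalent, the seminorm $(f, \kappa) \mapsto \|f\|_{H^1(\Omega_\epsilon)}$ dominates the norm $(f, \kappa) \mapsto \|f\|_{C^2(\overline \Omega)} + |\kappa|$, yielding the stated inequality. The main technical step is the boundary extension in (i); one must arrange the base points $q$ so that the images of the respective exponential maps jointly cover a full collar of $\partial \Omega$, which is where the geometry of $\overline \Omega$ enters. Once the extension is in hand, the estimate is essentially automatic from dimension counting.
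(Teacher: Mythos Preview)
Your approach is the paper's own: interior elliptic regularity, then the exponential map together with the geodesic ODE from Proposition~\ref{prop:csc} for the extension to $\partial\Omega$, and finite-dimensionality of $\ker\mathcal S_g^*$ for the estimate (the paper lists exactly this alongside an interior elliptic estimate as the two possible routes). One minor technical caveat: since here $g\in C^3$, the coefficient $\Ric(g)(\gamma_v',\gamma_v')$ in your ODE is only $C^1$ in the parameter $v$, so dependence on parameters yields $f\in C^1(U)$ rather than $C^2$ directly; you recover $C^2$ by reading the second partials off the Hessian identity $\nabla_g^2 f = -\tfrac{\kappa+R(g)f}{n-1}\,g + f\,\Ric(g)$, whose right-hand side is continuous on $\overline\Omega$ once $f\in C^0(\overline\Omega)$.
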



\subsection{The variational principle} In this section, we characterize the boundary values of solutions $f$ of \eqref{eqn:statickappa} whose existence is ensured by Proposition \ref{prop-bdryexten}. For simplicity, we will assume that $n \geq 3$ and that $(\overline \Omega, g)$ is smooth in this subsection and the next. 

We adopt the notation from \cite{MiaoTam08}. Let $\gamma$ be a smooth Riemannian metric on $\p \Omega  $.  Let $c$ be a constant.  For any 
integer $ k > \frac{n}{2} + 2 $, let $ \mathcal{M}^c_\gamma $ denote the set of $ H^k$ Riemannian metrics $ g $ on $ \overline \Omega $ such that 
$R ( g ) = c \ \ \mathrm{and} \ \ g |_{T(\p \Omega) } = \gamma,$ where $ R(g) $ is the scalar curvature of $ g $ and $ g |_{T( \p \Omega) } $ is the metric induced by $g$ on $\partial \Omega$.  We recall from \cite{MiaoTam08} that if $g$ is such that $\Delta_g + \frac{c}{n-1}$ has positive (Dirichlet) spectrum, then $\mathcal{M}^c_\gamma$ is a Hilbert manifold near $g$. Let $\nu$ be the outward unit normal to $\p \Omega$, let $\Pi(X,Y)= \la \nabla_X \nu, Y \ra$ for vector fields $X, Y$ tangent to $\partial \Omega$, and let $H=\mbox{tr}_{\gamma} (\Pi)$ be the mean curvature. (Our sign convention follows that of  \cite{MiaoTam08}.)  

The following theorem  provides a general context unifying \cite[Theorem 5]{MiaoTam08} and \cite[Theorem 2.1]{MiaoShiTam09}. 

\begin{thm} \label{thm-var}
Let $\kappa$ be a constant and let $\phi$ be a smooth function 
on $\partial \Omega$. We assume that either $ \kappa \neq 0 $ or 
that $ \phi $ does not vanish identically. 
Consider the functional  {on}  $\mathcal{M}^c_\gamma$ given by 
\be
g\mapsto E_{\kappa, \phi} ( g ) = \kappa V (g) - \int_{\p \Omega} H \phi \ d  \sigma \; ,
\ee  
where $ V(g) $ is the volume of $(\overline \Omega, g)$ and $ d  \sigma  $ is the volume form
of $ \gamma $.  Suppose $ g \in \M^c_\gamma $ is  a smooth metric such that
the operator
$  \Delta_g + \frac{c}{n-1}$ 
has positive (Dirichlet) spectrum. Then $ g $ is a critical point of 
$ E_{\kappa, \phi} ( \cdot ) $ on $ \mathcal{M}^c_\gamma$
if and only if there exists a smooth function $ f $ on $ \overline \Omega$ with
\be \label{eq-target}
L_g^* f=  \kappa g  \ \   \mathrm{ in } \ \ \Omega \ \ \mathrm{ and } \ \ f  =  \phi \ \ \mathrm{ on } \ \  \partial \Omega  .
\ee
\end{thm}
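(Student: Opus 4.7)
The plan is to compute $DE_{\kappa,\phi}$ on the tangent space $T_g\M^c_\gamma$, which consists of symmetric $2$-tensors $h$ satisfying $L_g(h) = 0$ and $h|_{T(\p\Omega)} = 0$, and to match it to the equation $L_g^* f = \kappa g$ together with the boundary condition $f|_{\p\Omega} = \phi$ via integration by parts. Since $DV_g(h) = \frac{1}{2}\int_\Omega \tr_g h\, d\mu_g$, we have for any $h \in T_g\M^c_\gamma$
\be
DE_{\kappa,\phi}(h) = \frac{\kappa}{2}\int_\Omega \tr_g h\, d\mu_g - \int_{\p\Omega}(DH)(h)\,\phi\, d\sigma.
\ee
The core input is the standard Green-type identity
\be
\int_\Omega [fL_g(h) - h\cdot L_g^* f]\, d\mu_g = \int_{\p\Omega}\mathcal{B}(f,h)\,d\sigma,
\ee
whose boundary integrand $\mathcal{B}(f,h)$ involves $f$, $\p_\nu f$, $\tr_g h$, $\p_\nu(\tr_g h)$, $(\Div_g h)(\nu)$, and $h(\nabla_g f,\nu)$. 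The first step is a pointwise computation on $\p\Omega$ showing that under the constraint $h|_{T(\p\Omega)} = 0$ one has $\mathcal{B}(f,h) = -2(DH)(h)f$; here one uses the explicit formula for the variation of the mean curvature under deformations preserving the induced metric $\gamma$, which involves $h(\nu,\nu)$, the tangential part of $h(\nu,\cdot)$, and the normal derivative of $\tr_g h - h(\nu,\nu)$. This is essentially the computation in \cite{MiaoTam08}.

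The $(\Leftarrow)$ direction is then immediate: if $L_g^* f = \kappa g$ and $f|_{\p\Omega} = \phi$, the left-hand side of the Green identity reduces to $-\kappa\int_\Omega \tr_g h\,d\mu_g$ and the right-hand side equals $-2\int_{\p\Omega}(DH)(h)\phi\,d\sigma$, which substituted into the formula above gives $DE_{\kappa,\phi}(h) = 0$. For $(\Rightarrow)$, first use the spectral hypothesis on $\Delta_g + \frac{c}{n-1}$ to solve the scalar Dirichlet problem $\Delta_g f + \frac{c}{n-1}f = -\frac{n\kappa}{n-1}$, $f|_{\p\Omega} = \phi$, uniquely for $f \in C^\infty(\overline\Omega)$; this is precisely the trace of the target equation, obtained by tracing $L_g^* f = \kappa g$. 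Set $T := L_g^* f - \kappa g$, so that $\tr_g T = 0$ by construction. The Green identity applied to this $f$ yields
\be
-\kappa\int_\Omega \tr_g h\, d\mu_g - \int_\Omega h\cdot T\, d\mu_g = -2\int_{\p\Omega}(DH)(h)\phi\, d\sigma,
\ee
so that $\int_\Omega h\cdot T\, d\mu_g = -2\,DE_{\kappa,\phi}(h) = 0$ for every $h \in T_g\M^c_\gamma$. To conclude that $T\equiv 0$, take any smooth symmetric $2$-tensor $h$ compactly supported in $\Omega$ and solve, using the spectral hypothesis, $(\Delta_g + \frac{c}{n-1})v = -\frac{1}{n-1}L_g(h)$ with $v|_{\p\Omega} = 0$, so that $h - vg \in T_g\M^c_\gamma$; since $\tr_g T = 0$, the correction term $vg$ is invisible to the pairing, and hence $\int_\Omega h\cdot T\, d\mu_g = 0$ for every compactly supported $h$, forcing $T\equiv 0$ on $\Omega$.

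The main obstacle is establishing the pointwise boundary identity $\mathcal{B}(f,h) = -2(DH)(h)f$ on $\p\Omega$ when $h|_{T(\p\Omega)} = 0$. Carrying this out requires a careful decomposition of $h$ and $\nabla_g f$ into normal and tangential components at the boundary, together with an explicit formula for $(DH)(h)$ valid for deformations preserving the induced metric. The boundary term $-\int_{\p\Omega} H\phi\, d\sigma$ in the definition of $E_{\kappa,\phi}$ is engineered precisely so that this cancellation occurs; everything else in the argument is a formal consequence of this identity combined with the elliptic invertibility provided by the spectral hypothesis.
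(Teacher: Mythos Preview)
Your proposal is correct and follows essentially the same strategy as the paper's proof. Both compute the first variation of $E_{\kappa,\phi}$, match the boundary term to the mean-curvature variation via integration by parts, and for the $(\Rightarrow)$ direction solve the trace Dirichlet problem for $f$ and then exploit that $T=L_g^*f-\kappa g$ is trace-free so that a correction of the form $vg$ (or a conformal factor) is invisible.

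Two small remarks. First, the identity you state as ``pointwise'', $\mathcal{B}(f,h)=-2(DH)(h)f$, only holds modulo a tangential divergence: a direct computation gives $\mathcal{B}(f,h)=-2(DH)(h)f-\div_\gamma(fX)$ where $X$ is the tangential vector field dual to $h(\nu,\cdot)|_{T(\partial\Omega)}$; the divergence term integrates to zero over $\partial\Omega$, so the integrated identity you need is correct. Second, your linear correction $h-vg$ is exactly the infinitesimal version of the paper's nonlinear conformal path $g(t)=u(t)^{4/(n-2)}(g+t\hat h)$: differentiating at $t=0$ yields $g'(0)=\hat h-vg$ with the same $v$. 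The paper explicitly constructs the path (via \cite[Proposition~1]{MiaoTam08}) rather than appealing to the Hilbert-manifold structure of $\mathcal{M}^c_\gamma$, which is slightly more self-contained; your version relies on knowing that every element of the linearized constraint space is a genuine tangent vector, which is legitimate under the spectral hypothesis since $\mathcal{M}^c_\gamma$ is a manifold near $g$.
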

\begin{proof}
We follow the proof of Theorem 2.1 in \cite{MiaoShiTam09}. 
Let $ \{ g(t) \}_{| t | \le \epsilon } \subset \M^c_\gamma $ be a continuously differentiable path such that $ g (0) = g $. Let $h = g^\prime (0) $.  Let $H(t)$ be the mean curvature of $\partial \Omega$ in $(\overline \Omega, g(t))$ computed with respect to the outward unit normal as above. 
A calculation as in \cite[(34)]{MiaoTam08}
yields that 
\be \label{linearizedH}
2 H^\prime (0) = [ d ( \tr_ g h ) - \div_g h ] (\nu) - \div_\gamma X - \la \Pi, h \ra_\gamma 
\ee
where $ \nu $ is the outward unit normal to $ \p \Omega  $ in $(\Omega, g(0) )$,
$ X $ is the vector field dual to the $1$-form $ h( \nu, \cdot) |_{T(\p \Omega) } $ on $ (\p \Omega , \gamma)$,
$ \div_\gamma X $ is the divergence of $ X $ on $ (\p \Omega , \gamma)$, and $ \la \cdot, \cdot \ra_\gamma $ is the metric product on $(\p \Omega, \gamma)$. 
Using that $ h|_{T(\p \Omega)  } = 0$, it follows that
\be \label{eqn:linearizationE} 
2 \frac{d}{dt}\big |_{t =0}  E_{\kappa, \phi} ( g(t) ) = \int_\Omega \kappa \tr_g h 
- \int_{\p \Omega}  \phi \left\{ [ d ( \tr_ g h ) - \div_g h ] (\nu) - \div_\gamma X \right\}
\ee
where we have omitted the volume forms. For any function $f$ on $\Omega$ with $f = \phi$ along $\partial \Omega$, we can integrate by parts in (\ref{eqn:linearizationE}) to obtain
\begin{align} \label{linearE1} 
2   \frac{d}{dt} &\big|_{t =0}   E_{\kappa, \phi}  ( g(t) )  \\
= &   \int_\Omega \kappa \tr_g h  - f \lf[\Delta_g ( \tr_g h ) - \div_g ( \div_g h )\ri]   + ( \Delta_g f ) \tr_g h   
 - \la \nabla^2_g f, h\ra_g  \nonumber \\
 &  +  \int_{\p \Omega} 
 h( \nu, \nabla_g f) -  h(\nu, \nabla_{\gamma}  f )  - \tr_g h \frac{\p f}{\p \nu} \nonumber
\end{align}
where $ \la \cdot, \cdot \ra_g $ denotes the metric product on $ (\overline \Omega, g) $ and 
$ \nabla_{\gamma}  $ is the gradient operator on $(\p \Omega , \gamma)$.
Since $ h |_{ T (\p \Omega) } = 0 $,  
\be \label{bdryeq1}
h ( \nu, \nabla_g f) - h ( \nu, \nabla_{\gamma}  f) -  \tr_g h \frac{\p f}{\p \nu}  
= 0 \ \ \ \mathrm{on} \ \ \p \Omega. 
\ee
On the other hand, the fact that $  \{ g(t) \}_{| t | \le \epsilon } \subset \M^c_\gamma $ implies that
\be \label{linearR}
L_g(h)= -  \Delta_g ( \tr_g h) + \div_g \div_g h - \la h, \Ric(g) \ra_g = 0, 
 \ee
where we recall that $L_g(h)=D R_g ( h )$ is the linearization of the scalar curvature map at $ g $ in direction $h$.  Therefore, it follows from \eqref{linearE1}-\eqref{linearR} that
\begin{eqnarray}  \label{gradE} 
2 \frac{d}{dt} \big|_{t=0}  E_{\kappa, \phi}  ( g(t) ) &=&   \int_\Omega \la h,   f \Ric(g) + ( \Delta_g  f) g - \nabla^2_g f + \kappa g \ra_g  \\ \nonumber &=& \int_\Omega \la h,  - L^*_g f + \kappa g \ra_g = 0. 
\end{eqnarray}
Hence if $f$ is a solution of (\ref{eq-target}), then $g$ is a critical point of $ E_{\kappa, \phi} (\cdot) $ on $ \mathcal{M}^c_\gamma$.

For the other direction, assume now that $g$ is a critical point of $E_{\kappa, \phi} (\cdot)$, and consider the unique solution $f$ of the boundary value problem 
\be \label{bdryprob}
\left\{
\begin{array} {rcl}
( n - 1)  \Delta_g f   + c f  & = & - n \kappa \ \ \ \mathrm{in} \ \ \Omega  \\
 f  & = & \phi \ \  \ \  \ \ \mathrm{on} \ \ \p \Omega  .
\end{array} 
\right.
\ee 
Let $ \hat h $ be an arbitrary smooth symmetric $(0,2)$-tensor with compact support in $ \Omega $.
Since the first Dirichlet eigenvalue of $  \Delta_g + \frac{c}{n-1}$ is positive, by \cite[Proposition 1]{MiaoTam08} there exist $ t_0 > 0 $ and $ \epsilon > 0 $ such that,
 for every $ t \in ( - t_0, t_0) $, there exists a unique smooth positive function $u(t)$ on $ \overline \Omega $ with $ | u(t) - 1 | \le \epsilon $
 such that $u(t)=1$ on $ \p \Omega  $, such that
$g (t) = u(t)^{\frac{4}{ n-2} } (g + t \hat h) \in \mathcal{M}^c_\gamma$, and such that $ \{ u(t) \}_{ | t |< t_0 }$ is differentiable at $ t = 0 $ with $ u(0) = 1 $.
For such a path $  g(t) $, we have $ h  :=g'(0) = \frac{4}{n-2} u'(0) g + \hat h $. 
Hence, by \eqref{gradE} and the fact that $f$ is a solution to \eqref{bdryprob},
we have
\be \label{gradE-1}
0 =   \int_\Omega \la \hat{h},   f \Ric(g) + ( \Delta_g  f) g - \nabla^2_g f + \kappa g \ra_g  .
\ee
Since $ \hat{h}$ can be chosen arbitrarily, we conclude that $ f $ satisfies \eqref{eq-target}. 
\end{proof}

\subsection{A volume comparison result for $V$-static metrics}

When the function $ \phi $  in Theorem \ref{thm-var}   is chosen to be identically zero and $\kappa =1$, then 
Theorem \ref{thm-var} reduces to Theorem 5 in \cite{MiaoTam08} and claims that  for a  metric $ g \in \mathcal{M}^c_\gamma$ for which $\Delta_g + \frac{c}{n-1}$ has positive first Dirichlet eigenvalue, the system \be \label{eq-target-zero}
\left\{
\begin{array} {rcl}
- ( \Delta_g f ) g + \nabla^2_g f - f \Ric (g) & = &  g \ \ \ \mathrm{in} \ \ \Omega  \\
 f & = & 0 \ \ \  \mathrm{on} \ \ \p \Omega  
\end{array} 
\right.
\ee
admits a solution $ f \in {C}^2(\overline \Omega)$ if and only if $ g $ 
is a critical point of the volume functional $ V(\cdot)$ restricted to  $ \mathcal{M}^c_\gamma$. 
We recall the following volume comparison result from \cite{MiaoTam08} for such metrics when $c=0$. 

\begin{thm} [\cite{MiaoTam08}] \label{thm-vcomparison-1}
Let $ g $ be a smooth, scalar flat metric on $\overline  \Omega $.
Suppose there exists a function $ f $ such that $ g $ and $ f $ satisfy
\eqref{eq-target-zero}. 
Let $ \gamma $ be the metric induced on $ \Sigma  = \partial \Omega$.
Suppose $ \Sigma $ is connected and that
 $(\Sigma, \gamma)$ can be isometrically embedded in $\R^n$
as a compact strictly convex hypersurface $\Sigma_0$.
If $n>7$, where $n$ is the dimension of $\overline \Omega$, we assume in addition that $ \overline \Omega $ is  spin.
Then
$$  V(g) \ge V_0 $$
where $V(g)$ is the volume of $(\overline \Omega,g)$ and $V_0$ is the
Euclidean volume of the compact domain bounded by $\Sigma_0$
in $ \mathbb{R}^n $.
Moreover, $ V (g) = V_0 $ if and
only if $(\overline \Omega, g)$ is isometric to a standard ball in $\R^n$.
\end{thm}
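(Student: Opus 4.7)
The plan is to combine an integral identity extracted from the $V$-static equation with the Shi--Tam quasi-local mass inequality (itself a consequence of the positive mass theorem) and the Alexandrov--Fenchel inequality for convex bodies in $\R^n$.

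First I would extract boundary consequences of the $V$-static equation. Tracing $\nabla^2 f - (\Delta f) g - f\,\Ric(g) = g$ with $R(g) = 0$ gives $\Delta_g f = -n/(n-1)$; the strong maximum principle and Hopf lemma then force $f > 0$ in $\Omega$ and $\phi := -\partial_\nu f > 0$ on $\Sigma$. Evaluating the tangential Hessian of $f$ at the boundary in two ways --- from the PDE itself one reads off $-\gamma/(n-1)$ (since $f|_\Sigma \equiv 0$), while the Gauss formula gives $-\phi\,\Pi$ --- shows that $\Sigma$ is totally umbilic, with $\Pi = \gamma/((n-1)\phi)$ and mean curvature $H = 1/\phi > 0$. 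Applying the divergence theorem to $\Delta_g f$ then yields the key identity
\[
\int_{\Sigma} \phi\, d\sigma \ = \ \frac{n\,V(g)}{n-1}.
\]

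Next I would invoke Shi--Tam's theorem: since $R(g) = 0$, the boundary mean curvature $H = 1/\phi$ is positive, and $\Sigma$ is isometric to the strictly convex $\Sigma_0 \subset \R^n$, Shi--Tam's conformal asymptotically flat extension together with the positive mass theorem (Witten's spinor argument in the dimensions where the spin hypothesis is needed) yields
\[
\int_{\Sigma} H\, d\sigma \ \leq \ \int_{\Sigma_0} H_0\, d\sigma_0,
\]
with equality iff $(\overline\Omega, g)$ is isometric to the Euclidean domain $(\Omega_0, g_{\mathrm{eucl}})$. Applying Cauchy--Schwarz to the pair $\phi,\ 1/\phi = H$ gives $|\Sigma|^2 \leq \int_\Sigma \phi\, d\sigma \cdot \int_\Sigma H\, d\sigma$, so
\[
\int_{\Sigma} \phi\, d\sigma \ \geq \ \frac{|\Sigma_0|^2}{\int_{\Sigma_0} H_0\, d\sigma_0}.
\]

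To close the argument, I would invoke the Alexandrov--Fenchel inequality $W_1^2 \geq W_0 W_2$ for the convex body $\Omega_0 \subset \R^n$, which reads
\[
|\Sigma_0|^2 \ \geq \ \frac{n V_0}{n-1}\int_{\Sigma_0} H_0\, d\sigma_0,
\]
with equality iff $\Omega_0$ is a round ball. Combining with the previous two inequalities yields $n V(g)/(n-1) = \int_\Sigma \phi\, d\sigma \geq n V_0/(n-1)$, that is, $V(g) \geq V_0$. For equality, Shi--Tam's rigidity identifies $(\overline\Omega, g)$ with the Euclidean $\Omega_0$, Alexandrov--Fenchel rigidity forces $\Omega_0$ to be a ball, and Cauchy--Schwarz equality (requiring $\phi$ constant) is then automatic. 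I expect the main subtlety to be the correct invocation of Shi--Tam in higher dimensions --- hence the need for the spin hypothesis when $n > 7$ --- while the remainder of the argument is simply a chain of classical inequalities.
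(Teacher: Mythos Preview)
Your argument is correct, but it is essentially the \emph{original} Miao--Tam proof from \cite{MiaoTam08}, which the paper explicitly sets out to replace. The paper's point is to give a \emph{new} proof that avoids Shi--Tam and hence the Positive Mass Theorem, thereby removing the spin assumption in dimensions $n>7$. Your route, by invoking Shi--Tam, reinstates exactly this dependence---which is why you correctly identify the spin hypothesis as the ``main subtlety.''

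The paper's argument (Proposition~\ref{propV} plus Lemma~\ref{lma-estofintR}) proceeds differently. Rather than comparing $\int_\Sigma H$ to the Euclidean model via Shi--Tam, it pairs the $V$-static equation with $\Ric(g)$ and integrates by parts to obtain
\[
\int_\Omega f\,|\Ric(g)|^2 \;=\; -\frac{1}{H}\int_\Sigma \Ric(g)(\nu,\nu)\,,
\]
which, combined with the Gauss equation, yields the intrinsic lower bound $\int_\Sigma R_\gamma \ge \tfrac{n-2}{n-1}H^2|\Sigma|$ and hence $V(g)\ge \tfrac{\sqrt{(n-1)(n-2)}}{n}\big(\int_\Sigma R_\gamma\big)^{-1/2}|\Sigma|^{3/2}$. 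The comparison to $V_0$ then uses \emph{two} Minkowski inequalities $W_1^2\ge W_0W_2$ and $W_2^2\ge W_1W_3$, whereas you use only the first. What the paper's approach buys is a strictly stronger theorem (no spin needed) and an inequality (Proposition~\ref{propV}(b)) that holds even without the convex-embeddability hypothesis; what your approach buys is a shorter chain once Shi--Tam is taken as a black box.

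One small remark on your write-up: the $(\nu,X)$-component of the Hessian equation at the boundary actually forces $\phi$ (and hence $H$) to be \emph{constant} on $\Sigma$, so your Cauchy--Schwarz step is in fact an equality and could be replaced by the direct identity $|\Sigma|=\tfrac{n}{n-1}HV(g)$, as the paper records.
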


The proof of Theorem \ref{thm-vcomparison-1} in \cite{MiaoTam08} uses the result of Shi and Tam in \cite{ShiTam02} and thus depends on the Positive Mass Theorem \cite{SchoenYau79, Witten81}.  Here we include another proof of Theorem \ref{thm-vcomparison-1} that does not depend on the Positive Mass Theorem, so we can omit the spin assumption in high dimensions. We start with the following proposition. 

\begin{prop} \label{propV}
Let $ g $ be a smooth, scalar flat metric  on $\overline \Omega $.
Suppose there exists a function $ f $ such that $ g $ and $ f $ satisfy
\eqref{eq-target-zero}. 
Let $ \gamma $ be the metric induced on $ \Sigma  = \partial \Omega$, let $ | \Sigma  | $ be the area of $(\Sigma, \gamma)$, and suppose that $ \Sigma $ is connected. Then
\begin{enumerate}
\item[a)] $ \int_\Sigma R_\gamma > 0 $,
 where $ R_\gamma $ is the scalar curvature of $ (\Sigma, \gamma) $.
\item[b)] The volume $V(g)$ of $ (\overline \Omega, g) $ satisfies
$$
V (g) \geq  \frac{ \sqrt{ ( n-2)(n-1)} }{n}  \lf( \int_\Sigma R_\gamma \ri)^{-\frac{1}{2} }  | \Sigma |^\frac32.
$$
Equality holds if and only if $ (\overline \Omega, g)$ is isometric to a standard ball in $ \R^n$.
\item[c)] When $ n = 3 $, one has
$$
V (g) \geq   \frac{ | \Sigma |^\frac32 } { 6 \sqrt \pi }.$$
Equality holds if and only if $ (\overline \Omega, g)$ is isometric to a round ball in $ \R^3$.
\end{enumerate}
\end{prop}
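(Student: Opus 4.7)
My plan is to first extract pointwise boundary information from the overdetermined system \eqref{eq-target-zero}, then establish key integral identities coupling $V(g)$, the boundary mean curvature, and the ambient Ricci curvature, and finally combine these via the Gauss equation and a sharp H\"older inequality to obtain the volume bound.

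\textbf{Boundary analysis and volume identity.} Tracing \eqref{eq-target-zero} with $R(g)=0$ gives $\Delta_g f = -n/(n-1)$; combined with $f|_\Sigma = 0$ and the maximum principle, this yields $f > 0$ in $\Omega$ and $\partial_\nu f < 0$ on $\Sigma$. Restricting \eqref{eq-target-zero} to $T\Sigma \times T\Sigma$ at the boundary (where $f$ vanishes) gives $(\partial_\nu f)\Pi = -\gamma/(n-1)$, so $\Sigma$ is totally umbilic, and its mean curvature $H = \tr_\gamma \Pi$ satisfies $H\partial_\nu f = -1$; in particular $H > 0$. The divergence theorem applied to $\Delta_g f$ then gives the fundamental identity
\[ V(g) = \frac{n-1}{n}\int_\Sigma \frac{d\sigma}{H}. \]

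\textbf{Curvature identity.} Since $R(g)=0$, the contracted Bianchi identity gives $\div_g \Ric(g) = \tfrac{1}{2}dR(g) = 0$; together with \eqref{eq-target-zero} this yields $\langle \Ric(g), \nabla^2_g f\rangle_g = f|\Ric(g)|^2$. Integrating $\div_g(\Ric(g)(\nabla_g f,\cdot))$ over $\Omega$ and using $\nabla_g f|_\Sigma = -H^{-1}\nu$ produces
\[ -\int_\Sigma \frac{\Ric(\nu,\nu)}{H}\,d\sigma \;=\; \int_\Omega f\,|\Ric(g)|^2\,d\mu_g \;\geq\; 0, \]
with equality iff $\Ric(g)\equiv 0$ (since $f > 0$ in $\Omega$).

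\textbf{Combination for (b), then (a) and (c).} The contracted Gauss equation on the umbilic $\Sigma$ with $R(g)=0$ reads $R_\gamma = -2\Ric(\nu,\nu) + \frac{n-2}{n-1}H^2$. Paired with the sharp three-exponent H\"older inequality
\[ |\Sigma|^3 \;\leq\; \bigl(\textstyle\int_\Sigma H^{-1}\,d\sigma\bigr)^{2}\,\bigl(\textstyle\int_\Sigma H^2\,d\sigma\bigr) \]
(saturated exactly when $H$ is constant), it suffices to establish the unweighted inequality $\int_\Sigma \Ric(\nu,\nu)\,d\sigma \leq 0$: combined with Gauss this gives $\int_\Sigma R_\gamma \geq \frac{n-2}{n-1}\int_\Sigma H^2$, and substituting into the H\"older bound yields $V(g)^2\int_\Sigma R_\gamma \geq \frac{(n-1)(n-2)}{n^2}|\Sigma|^3$, which is (b). Part (a) then follows from $\int_\Sigma R_\gamma \geq \frac{n-2}{n-1}\int_\Sigma H^2 > 0$, and (c) is the specialization to $n=3$ together with Gauss-Bonnet $\int_\Sigma R_\gamma = 4\pi\chi(\Sigma) \leq 8\pi$ for any closed orientable surface. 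For the rigidity in (b): equality forces $H$ constant on $\Sigma$ and $\int_\Sigma \Ric(\nu,\nu) = 0$, hence $\Ric(g)\equiv 0$ by the curvature identity; Ricci-flatness together with the constant mean curvature, totally umbilic boundary identifies $(\overline\Omega,g)$ as a standard ball in $\R^n$.

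\textbf{The main obstacle} is passing from the weighted statement $\int_\Sigma H^{-1}\Ric(\nu,\nu) \leq 0$, which falls out of the divergence computation above, to the unweighted $\int_\Sigma \Ric(\nu,\nu)\,d\sigma \leq 0$. I expect this to require a further integral identity of Pohozaev or Reilly type that exploits the full structure $\nabla^2_g f = -g/(n-1) + f\Ric(g)$, taking advantage of the rigidity encoded in the equality $V = \frac{n-1}{n}\int_\Sigma H^{-1}$ --- an \emph{equality} version of Heintze-Karcher in a setting without a Ricci lower bound. Identifying the correct auxiliary test field (or conformal companion to $f$) that converts the weighted identity into its unweighted counterpart is the heart of the argument.
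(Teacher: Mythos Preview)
Your approach is essentially the paper's, and your ``main obstacle'' is not an obstacle at all: you have overlooked that $H$ is \emph{constant} on $\Sigma$. This follows from the $(\nu,X)$-component of the system \eqref{eq-target-zero} at the boundary. Indeed, for $X$ tangent to $\Sigma$, on $\Sigma$ one has $\nabla_g f = (\partial_\nu f)\nu$, hence
\[
\nabla^2_g f(X,\nu) = \langle \nabla_X\big((\partial_\nu f)\nu\big),\nu\rangle = X(\partial_\nu f),
\]
while the $(\nu,X)$-component of \eqref{eq-target-zero} at a point where $f=0$ reads $\nabla^2_g f(X,\nu) = (1+\Delta_g f)\,g(X,\nu) = 0$. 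Thus $\partial_\nu f$ is locally constant on $\Sigma$, and since $\Sigma$ is connected, $H = -1/\partial_\nu f$ is a positive constant. (The paper imports this from \cite[Theorem~7(iii)]{MiaoTam08}.)

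With $H$ constant, your weighted identity $-\int_\Sigma H^{-1}\Ric(\nu,\nu)\,d\sigma = \int_\Omega f|\Ric(g)|^2 \ge 0$ is already the unweighted one, your H\"older step becomes the trivial equality, and the volume identity reads $|\Sigma| = \frac{n}{n-1}HV(g)$. The remainder of your argument then goes through exactly as in the paper: the Gauss equation gives $\int_\Sigma R_\gamma \ge \frac{n-2}{n-1}H^2|\Sigma|$, and substituting $H = \frac{(n-1)|\Sigma|}{nV(g)}$ yields (b); (a) is immediate and (c) follows from Gauss--Bonnet. For the rigidity, equality forces $\Ric(g)\equiv 0$; the paper invokes \cite[Theorem~2.1]{MiaoTam-TAMS} to conclude that a Ricci-flat $(\overline\Omega,g)$ satisfying \eqref{eq-target-zero} is a Euclidean ball, so you should either cite that result or indicate how you would prove it.
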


\begin{proof} 
Let $ \nu $ be the outward unit normal to $ \Sigma $. Let $ H $ and $ \Pi$ be the mean curvature and
 the second fundamental form of 
$ \Sigma $ in $(\overline \Omega, g)$ with respect to $ \nu$. By Theorem 7 (iii) in \cite{MiaoTam08}, 
 $ f $ is positive on $\Omega$, 
$ H $ is a positive constant, and $(n-1) \Pi = H \gamma $.
Moreover, by (48) and (53)  in \cite{MiaoTam08},  we have that 
$ H \frac{\partial f}{\partial \nu} = - 1 $  and  
$ | \Sigma | = \frac{n}{n-1} H V (g) $. Hence
\be \label{eq-npf-a}
\int_\Omega \la \nabla^2_g f , \Ric(g) \ra = 
\int_\Sigma \Ric(g) ( \nu, \nabla_g f ) - \int_\Omega
\la d f, \div_g \Ric(g) \ra 
=   \frac{\partial f}{\partial \nu} \int_\Sigma    \Ric(g)(\nu, \nu)  
\ee
where the first equality follows from an integration by parts, and where we used that $2 \div_g (\Ric(g)) =  d R(g) = 0$ and that  $\frac{\partial f}{\partial \nu} $ is constant along $\Sigma$ to justify the second equality. 
Taking the metric product of \eqref{eq-target-zero} with $\Ric(g)$ and using again that $R(g) = 0$, we see that
\be \label{eq-npf-b}
\la \nabla^2_g  f , \Ric(g) \ra = 
\la f \Ric(g) + ( \Delta_g f) g + g , \Ric(g) \ra 
=   f | \Ric(g) |^2 .
\ee
Equations \eqref{eq-npf-a} and \eqref{eq-npf-b}, together with the fact that $ H \frac{\partial f}{\partial \nu} = - 1$, give 
\be \label{eq-npf-r}
- \int_\Omega f | \Ric(g) |^2  
 = \frac{1}{H}  \int_\Sigma \Ric(g)(\nu, \nu) .
\ee
In particular, this shows that
\be \label{eq-npf-d}
  \int_\Sigma \Ric(g)(\nu, \nu)  \leq 0,
\ee
with equality if and only if $ \Ric(g) = 0 $ on $ \Omega$. 

The Gauss Equation, along with the fact $ R(g) =0 $ and $ \Pi = \frac{ H }{ n-1} \gamma$, implies 
\be \label{eq-npf-e}
2 \Ric(g) (\nu, \nu) = \frac{ n-2 }{ n -1 }   H^2 - R_\gamma .
\ee
It follows from \eqref{eq-npf-d} and \eqref{eq-npf-e} that
\be \label{eq-npf-f}
\int_\Sigma R_\gamma \geq  \ \frac{ n-2 }{ n -1 }  \int_\Sigma H^2 
= \frac{ n-2 }{ n -1 }  H^2 | \Sigma |.  
\ee
This proves a). 

The inequality in b) follows  from 
 \eqref{eq-npf-f} and the fact $ | \Sigma | = \frac{n}{n-1} H V (g) $. If equality holds, then $\Ric(g) = 0$ on $\Omega$. That $(\overline \Omega, g)$ is isometric to a ball in $\R^n$ in this case then follows from  \cite[Theorem 2.1]{MiaoTam-TAMS}.

Finally, c)  follows from b) and the Gauss-Bonnet Theorem. 
\end{proof}

The fact that Proposition \ref{propV} implies Theorem \ref{thm-vcomparison-1} was first noted by Tam \cite{Tam-private}.  We thank Luen-Fai Tam for  pointing out the following lemma. 

\begin{lemma} \label{lma-estofintR} 
Suppose $ \Sigma \subset \R^n $ is an embedded, closed, strictly convex hypersurface.
Let $ R_\gamma$ be the scalar curvature of $ \Sigma$ with respect to the metric $\gamma$ induced from the Euclidean metric, let $|\Sigma| $ be its area, and let $V$ be the Euclidean volume of the region enclosed by $ \Sigma$.
Then
\be \label{eq-estofintR}
\int_{\Sigma} R_\gamma \le \frac{(n-1)(n-2)|\Sigma|^3}{n^2V^2}.
\ee
\end{lemma}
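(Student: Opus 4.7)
The plan is to reduce the claim to two classical Alexandrov--Fenchel inequalities for the quermassintegrals of the convex body $K \subset \R^n$ bounded by $\Sigma$. First, using the Gauss equation for a hypersurface in Euclidean space, the scalar curvature of $\gamma$ equals $R_\gamma = H^2 - |\Pi|^2 = 2\,\sigma_2(\lambda_1, \ldots, \lambda_{n-1})$, where $\lambda_1, \ldots, \lambda_{n-1}$ are the principal curvatures of $\Sigma$ (all positive by strict convexity) and $\sigma_2$ is the second elementary symmetric polynomial. Thus $\int_\Sigma R_\gamma$ is, up to a combinatorial factor, the third quermassintegral $W_3(K)$; similarly one has $W_0(K) = V$, $W_1(K) = |\Sigma|/n$, and $W_2(K) = \frac{1}{n(n-1)}\int_\Sigma H$.

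Next, I would invoke the Alexandrov--Fenchel inequalities $W_1(K)^2 \geq W_0(K)\, W_2(K)$ and $W_2(K)^2 \geq W_1(K)\, W_3(K)$, both of which hold for any convex body in $\R^n$ (see, e.g., Schneider's monograph on convex bodies). The first is the classical Minkowski inequality and yields $\int_\Sigma H \leq \frac{(n-1)|\Sigma|^2}{nV}$; the second yields $\int_\Sigma R_\gamma \leq \frac{(n-2)\bigl(\int_\Sigma H\bigr)^2}{(n-1)|\Sigma|}$. Chaining the two---equivalently, deducing $W_3 \leq W_1^3/W_0^2$ from the two Alexandrov--Fenchel inequalities---produces the claimed bound $\int_\Sigma R_\gamma \leq \frac{(n-1)(n-2)|\Sigma|^3}{n^2 V^2}$, with equality realized by the round sphere (a quick check confirms the constants are sharp in that case).

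The proof is essentially bookkeeping once the correct invariants are identified; the only conceptual content is invoking the Alexandrov--Fenchel inequalities. The main care is to match the standard normalizations of quermassintegrals with the curvature integrals appearing on $\Sigma$, and to note that strict convexity is used twice: to ensure that $K$ is a convex body in the classical sense (so Alexandrov--Fenchel applies) and that $\sigma_2 \geq 0$, so that $\int_\Sigma R_\gamma$ is bona fide the third quermassintegral (without which the chain of inequalities could fail).
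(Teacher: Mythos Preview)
Your proposal is correct and is essentially the same argument as the paper's: both identify $V,\ |\Sigma|,\ \int_\Sigma H,\ \int_\Sigma R_\gamma$ with the quermassintegrals $W_0,\ldots,W_3$ (up to the stated constants) and then chain the two Minkowski/Alexandrov--Fenchel inequalities $W_1^2\ge W_0W_2$ and $W_2^2\ge W_1W_3$ from Schneider to obtain $W_3\le W_1^3/W_0^2$, which is exactly \eqref{eq-estofintR}. One small remark: your aside that strict convexity is needed so that $\sigma_2\ge 0$ in order for $\int_\Sigma R_\gamma$ to be ``bona fide'' the third quermassintegral is not quite the point---the curvature-integral formula for $W_3$ holds for any smooth convex body regardless of the sign of $\sigma_2$; convexity is really only used to invoke the Alexandrov--Fenchel inequalities.
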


\begin{proof}
Let $ H $ denote the (positive) mean curvature of $ \Sigma$.
Taking $ i$, $j$, $k$ to be $ 1$, $2$, $3$ and then $0$, $ 1$, $2$
in  (6.4.6) in \cite[p. 334]{Schneider},  one arrives at two of the 
Minkowski inequalities 
\be \label{wijk}
 (W_1)^2 \ge W_0 W_2 ,  \ (W_2)^2 \ge W_1 W_3,
\ee
where $W_0 = V$, $  W_1 = \frac{1}{n} | \Sigma |$,
$W_2 = \frac{1}{n(n-1)} \int_{\Sigma} H$, and
 $ W_3 = \frac{ 1}{n(n-1)(n-2)} \int_{\Sigma} R_\gamma .$
Clearly, \eqref{wijk} implies \eqref{eq-estofintR}. 
\end{proof}

Theorem \ref{thm-vcomparison-1} without the spin assumption in dimension $n >7$ now follows  from
Proposition \ref{propV} and Lemma \ref{lma-estofintR}.


\section{Proof of Theorem \ref{locdef}}

The proof of Theorem \ref{locdef} is similar to those of the localized deformation theorems in \cite{cor:schw, cs:ak, cd}. It proceeds by iteration with a linear correction at each stage. The linearized problem is solved variationally. This requires delicate weighted $L^2$-estimates. The pointwise bounds on these variational solutions required to establish convergence of the iteration follow from interior Schauder estimates. 

\subsection{Function spaces}  \label{func-sp} Let $k$ be a non-negative integer, $\alpha \in (0, 1)$, and let $(\overline \Omega, g)$ be a compact $C^{k, \alpha}$ Riemannian manifold with boundary. Let $\Omega$ denote the manifold interior of $\overline \Omega$. Let $\ell \leq k$ be a non-negative integer, and let $\rho$ be a positive measurable function on $\Omega$. Below, we use the connection and the tensor norms induced by $g$, and we integrate with respect to the volume form $d\mu_g$.  

Let $L^{2}_{\rho}(\Omega)$  be the set of  functions (or tensor fields) $u$ such that $|u|\rho^{1/2} \in L^{2}(\Omega)$ and let $\|u\|_{L^2_{\rho}(\Omega)}= \|u\rho^{1/2}\|_{L^2(\Omega)}$.  The pairing \[\langle u,v\rangle_{L^2_{\rho}(\Omega)} = \langle u\rho^{1/2},v\rho^{1/2}\rangle_{L^2(\Omega)}\] makes $L^2_{\rho}(\Omega)$ into a Hilbert space.  Let $H^{\ell}_{\rho}(\Omega)$ be the Hilbert space of $L^{2}_{\rho}(\Omega)$ functions (tensor fields) whose covariant derivatives up to and including order $\ell$ are also in $L^{2}_{\rho}(\Omega)$.   The inner product is defined by incorporating the $L^2_{\rho}(\Omega)$-pairings on all the derivatives, so that $$\| u\|_{H^\ell_{\rho}(\Omega)}^2= \sum\limits_{j=0}^\ell \|\nabla_g ^j u\|^2_{L^2_{\rho}(\Omega)}.$$  

Assume now that $k \geq 1$. Let $d(x)=d(x, \partial \Omega)$ be the distance to the boundary $\partial \Omega$ computed with respect to the metric $g$. Then $d(x)$ is a $C^{k, \alpha}$ function near $\partial \Omega$ \cite{Foote}. We will use a $C^{k, \alpha}$ weight $\rho$ with $0<\rho\leq 1$ on $\Omega$ with the following boundary behavior: $\rho$ depends monotonically on the distance $d$ to $\partial \Omega$, $\rho=e^{-1/d}$ near $\partial \Omega$, and $\rho \equiv 1$ outside a neighborhood of $\partial \Omega$.  Note that $|\nabla_g^{\ell}\rho|\leq C(\ell)d^{-2\ell}e^{-1/d}$. To be precise, we let $\rho(x)=\tilde{\rho}(d(x))$, where $\tilde{\rho}:\mathbb R \rightarrow [0,1]$ is smooth and monotone, $\tilde{\rho}'\geq 0$, with $\tilde{\rho}(t)>0$ for $t>0$, and $\tilde{\rho}(t)= e^{-1/t}$ on some interval $(0, d_0)$.  

Let $\phi>0$ be a $C^{k, \alpha}$ function on $\Omega$ such that for all $x\in \Omega$, $B(x, \phi(x))\subset \Omega$, and so that near $\partial \Omega$, $\phi=d^2$.  For $r, s\in \mathbb R$, let $\varphi=\phi^r  \rho^s$. For a $C^{\ell, \alpha}$ function $u : \Omega \to \R$ we define $\|u\|_{C^{\ell,\alpha}_{\phi, \varphi}(\Omega)}$ by
\begin{align*}
\sup\limits_{x\in \Omega}\Big( & \sum\limits_{j=0}^{\ell} \varphi (x) \phi(x)^j \|\nabla_g ^j u \|_{C^0(B(x, \phi(x)/2))}+ \varphi (x) \phi(x)^{\ell+\alpha} [\nabla_g^\ell u]_{0,\alpha;B(x, \phi(x)/2)}\Big).
\end{align*}
We let $C^{\ell,\alpha}_{\phi, \varphi}(\Omega)$ be the space of all functions $u\in C^{\ell,\alpha}(\Omega)$ for which $\|u\|_{C^{\ell,\alpha}_{\phi, \varphi}(\Omega)}<\infty$. Note that $\|\cdot \|_{C^{\ell,\alpha}_{\phi, \varphi}(\Omega)}$ is a Banach norm on this space. When the context is clear, we will  suppress the domain in the notation below. With our choice of $\phi$ and $\varphi$, we have that $\|u\|_{C^{\ell,\alpha}_{\phi, \varphi}}$ is equivalent to $\|u\varphi\|_{C^{\ell,\alpha}_{\phi, 1}}$. Moreover, differentiation is 
continuous as a map from $C^{\ell,\alpha}_{\phi, \varphi}(\Omega)$ to $C^{\ell-1,\alpha}_{\phi, \phi\varphi}(\Omega)$. 

These weighted H\"{o}lder norms are equivalent to those defined in \cite[p. 66]{cd}. 


\subsection{Coercivity estimate for $\mathcal S_g^*$} Let $\{ x\in \Omega: d(x)<\epsilon_0\}$ be a regular tubular neighborhood of $\partial \Omega$
and let $\Omega_{\epsilon}=\{ x\in \Omega: d(x)>\epsilon\}$. 

\begin{prop} [\protect{Cf. \cite[Theorem 3]{cor:schw}}] \label{prop:coest} Let $(\overline \Omega, g)$ be as in Theorem \ref{locdef}.  There exists a constant $C>0$ so that for all $(u, a)\in H^2_{\rho}(\Omega)\times \mathbb R$,
\be 
\label{eq:coest} 
\| (u,a)\|_{H^2_{\rho}(\Omega)\times \mathbb R} \leq C \|\mathcal S_g^*(u,a)\|_{L^2_{\rho}(\Omega)}.
\ee 
\end{prop}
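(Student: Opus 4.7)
The plan is to argue by contradiction along the lines of the weighted coercivity estimates in \cite{cor:schw, cs:ak, cd}. Suppose no such $C$ exists. Then there is a sequence $(u_k, a_k) \in H^2_\rho(\Omega) \times \R$ with $\|(u_k, a_k)\|_{H^2_\rho(\Omega) \times \R} = 1$ and $\|\mathcal{S}_g^*(u_k, a_k)\|_{L^2_\rho(\Omega)} \to 0$. The goal is to extract a subsequential limit, recognize it as an element of the kernel of $\mathcal{S}_g^*$ --- hence trivial by the assumption of Theorem \ref{locdef} --- and then use the super-polynomial decay of $\rho$ at the boundary to upgrade local convergence to global convergence in $H^2_\rho$, contradicting the normalization.

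The operator $\mathcal{S}_g^*$ is overdetermined elliptic in $u$: its principal symbol acting on a scalar is the injective map $\xi \mapsto \xi \otimes \xi - |\xi|^2 g$ for $\xi \neq 0$. Interior $L^2$ elliptic estimates on $\Omega_{2\epsilon} \subset \subset \Omega_\epsilon \subset \subset \Omega$ therefore give
\[
\|u_k\|_{H^2(\Omega_{2\epsilon})} \leq C_\epsilon \bigl(\|\mathcal{S}_g^*(u_k, a_k)\|_{L^2(\Omega_\epsilon)} + \|u_k\|_{L^2(\Omega_\epsilon)} + |a_k|\bigr),
\]
and because $\rho$ is bounded below on $\Omega_\epsilon$, the right-hand side is uniformly bounded by $\|(u_k, a_k)\|_{H^2_\rho \times \R} + \|\mathcal{S}_g^*(u_k, a_k)\|_{L^2_\rho}$. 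A diagonal Rellich extraction yields $a_k \to a \in \R$, $u_k \to u$ weakly in $H^2_{\mathrm{loc}}(\Omega)$ and strongly in $H^1_{\mathrm{loc}}(\Omega)$, with $\mathcal{S}_g^*(u, a) = 0$ weakly on $\Omega$. By the interior regularity underlying Proposition \ref{prop-bdryexten}, $u \in C^2(\Omega)$, and the non-degeneracy hypothesis of Theorem \ref{locdef} then forces $(u, a) = 0$. Applying the same elliptic estimate to the Cauchy differences $(u_k - u_m, a_k - a_m)$ upgrades the convergence to $u_k \to 0$ strongly in $H^2_{\mathrm{loc}}(\Omega)$ with $a_k \to 0$.

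The heart of the proof is then to establish a weighted interior estimate of the schematic form
\[
\|u\|_{H^2_\rho(\Omega)}^2 \leq C\bigl(\|\mathcal{S}_g^*(u, a)\|_{L^2_\rho(\Omega)}^2 + a^2 + \|u\|_{L^2(K)}^2\bigr)
\]
for some fixed compact $K \subset \subset \Omega$; applied to $(u_k, a_k)$ each term on the right tends to $0$ while the left tends to $1 - a_k^2 \to 1$, which is the sought contradiction. To obtain the weighted estimate I cover $\Omega$ by the bulk $\{d > d_0\}$ together with dyadic shells $A_j = \{2^{-j-1} < d < 2^{-j}\}$, apply the rescaled interior $L^2$ estimate for $\mathcal{S}_g^*$ on each ball $B(x, d(x)/4)$, and reassemble via a Vitali-type covering, using that $\rho$ is essentially constant on each such ball. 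This produces $\|u\|_{H^2_\rho}^2$ on the left, and on the right $C\|\mathcal{S}_g^*(u,a)\|_{L^2_\rho}^2$, an $a^2$-term absorbed using $\int_\Omega \rho < \infty$, and an error of the form $\int_\Omega \rho(x) d(x)^{-4} |u|^2$. The main technical obstacle is the absorption of this last term; this is precisely where the choice $\rho = e^{-1/d}$ is decisive, since $\rho(x) d(x)^{-N}$ is uniformly bounded on $\Omega$ for every $N$ and tends to zero as $d \to 0$ faster than any power of $d$. Splitting at a threshold $d = \eta$, the interior piece is bounded by $\|u\|_{L^2(K_\eta)}^2$, and in the boundary layer the super-polynomial decay of $\rho d^{-4}$ allows the contribution to be absorbed into a small multiple of $\|u\|_{H^2_\rho}^2$, completing the estimate.
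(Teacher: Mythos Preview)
Your overall strategy---contradiction, extraction of a limit in the kernel, then a weighted estimate with a compactly supported lower-order term---is reasonable, but the execution of the weighted interior estimate has two related gaps.

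First, the claim that $\rho$ is ``essentially constant on each ball $B(x, d(x)/4)$'' is false for $\rho = e^{-1/d}$. On such a ball $d(y)$ ranges over $[\tfrac{3}{4}d(x), \tfrac{5}{4}d(x)]$, so $|1/d(x) - 1/d(y)|$ is of order $1/d(x)$ and $\rho(y)/\rho(x)$ can be as extreme as $e^{\pm c/d(x)}$, which is unbounded as $d(x)\to 0$. (The scale on which this weight \emph{is} comparable is $d(x)^2$; that is exactly the function $\phi$ used in the paper's weighted H\"older spaces, not in the $L^2$ argument.)

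Second, and more seriously, the absorption step fails. You want $\int_{\{d<\eta\}} \rho\, d^{-4} |u|^2$ to be a small multiple of $\|u\|_{H^2_\rho}^2$. But the relevant pointwise comparison is between $\rho\, d^{-4}$ and $\rho$ (the weight in $\|u\|_{L^2_\rho}^2$), and that ratio is $d^{-4}$, which \emph{blows up} as $d\to 0$. The fact that $\rho\, d^{-4}\to 0$ is a comparison with the constant weight $1$, and is irrelevant since you have no unweighted control of $u$ near $\partial\Omega$. Concretely, for $u$ supported in $\{d<\eta\}$ and slowly varying, the error term is at least $\eta^{-4}\|u\|_{L^2_\rho}^2$, so no choice of $\eta$ makes it small relative to $\|u\|_{H^2_\rho}^2$.

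The paper avoids both issues by reversing the order of operations: it first proves the \emph{unweighted} estimate $\|(u,a)\|_{H^2(\Omega_\epsilon)\times\R}\leq C\|\mathcal S_g^*(u,a)\|_{L^2(\Omega_\epsilon)}$ with $C$ independent of $\epsilon$, by a contradiction/compactness argument on the exhaustion $\Omega_\epsilon$ using uniformly bounded extension operators $H^2(\Omega_\epsilon)\to H^2(\Omega)$. The lower-order term is absorbed at this unweighted stage, so nothing remains to absorb after weighting. One then multiplies the squared estimate by $\rho'(\epsilon)$, integrates in $\epsilon$, and uses the co-area formula to convert the integrated unweighted norms directly into $\|\cdot\|_{H^2_\rho}$ and $\|\cdot\|_{L^2_\rho}$.
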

\begin{proof} There is a constant $D>0$ so that for all $\epsilon>0$ sufficiently small, there is an extension operator $E_{\epsilon}: H^2(\Omega_{\epsilon})\rightarrow H^2(\Omega)$ with norm bounded by $D$. The equation $\mathcal S_g^*(u,a)=  -(\Delta_g u) g + \nabla_g^2 u - u \Ric(g) + \frac{a}{2}g$ shows that 
\be \label{eq:coest0}
\|(u,a)\|_{H^2(\Omega_{\epsilon})\times \R} \leq C(n, g, \Omega) \left( \|\mathcal S_g^*(u,a)\|_{L^2(\Omega_{\epsilon})} + \|(u,a)\|_{H^1(\Omega_{\epsilon})\times \R} \right).
\ee 
Note that $\mathcal S_g^*$ has trivial kernel in $H^1_{\rm{\mathrm{loc}}}(\Omega_{\epsilon})$ for $\epsilon >0$ sufficiently small. Indeed, the kernels $K_{\epsilon}$ of $\mathcal S_g^*$ on $\Omega_{\epsilon}$ decrease as $\epsilon \downarrow 0$ (by restriction, which is injective by the remarks following \eqref{eqn:statickappa}). Since each is at most $(n+2)$-dimensional (cf. Section \ref{sec:ker}) and there is no kernel on $\Omega$, they must stabilize at $\{0\}$.  

We claim that there is a constant $C>0$ so that for all $\epsilon>0$ sufficiently small and $(u,a)\in H^2(\Omega_{\epsilon})\times \R$, 
\be \label{coest-1}
\| (u,a)\|_{H^2(\Omega_{\epsilon})\times \mathbb R} \leq C \|\mathcal S_g^*(u,a)\|_{L^2(\Omega_{\epsilon})} .
\ee

We prove this by contradiction.  Suppose the estimate does not hold. There is a sequence $\epsilon_j>0$ with $\epsilon_j \downarrow 0$ and $(u_j,a_j)\in H^2(\Omega_{\epsilon_j})\times \R$ such that 
\be \label{eq:contra}
\| (u_j,a_j)\|_{H^2(\Omega_{\epsilon_j})\times \mathbb R} \geq j \|\mathcal S_g^*(u_j,a_j)\|_{L^2(\Omega_{\epsilon_j})} .
\ee
Let $\tilde u_j=E_{\epsilon_j}(u_j)$ be the extension of $u_j$ to $\Omega$. Then  
\be \label{eq:extest}
\|(u_j,a_j)\|_{H^2(\Omega_{\epsilon_j})\times \R} \leq \|(\tilde u_j, a_j)\|_{H^2(\Omega)\times \R} \leq D \|(u_j, a_j)\|_{H^2(\Omega_{\epsilon_j})\times \R}  .
\ee
We normalize so that $\|(\tilde u_j, a_j)\|_{H^1(\Omega)\times \R} =1$.  Using (\ref{eq:coest0}) and  (\ref{eq:contra}), we obtain
\begin{align*}
 \|(u_j, a_j)\|_{H^2(\Omega_{\epsilon_j})\times \R} 
& \leq  C(n, g, \Omega) \left( \|\mathcal S_g^*(u_j,a_j)\|_{L^2(\Omega_{\epsilon_j})} + \|(u_j,a_j)\|_{H^1(\Omega_{\epsilon_j})\times \R} \right)\\
& \leq C(n, g, \Omega) \left( j^{-1}\| (u_j,a_j)\|_{H^2(\Omega_{\epsilon_j})\times \mathbb R}+1\right).
\end{align*}
For $j$ large enough so that $C(n, g, \Omega) j^{-1} \leq \frac{1}{2}$, we obtain $ \|(u_j, a_j)\|_{H^2(\Omega_{\epsilon_j})\times \R} \leq 2 C(n, g, \Omega)$.  By (\ref{eq:extest}), we then have $ \|(\tilde u_j, a_j)\|_{H^2(\Omega)\times \R} \leq 2 D C(n, g, \Omega)$. By the Rellich Lemma and the fact that the sequence $\{ a_j\}$ is bounded, there exist $(u,a)\in  H^2(\Omega)\times \R$ and a subsequence of $\{ (\tilde u_j, a_j)\}$ that converges to $(u,a)$ weakly in $H^2(\Omega)\times \R$ and strongly in $H^1(\Omega)\times \R$.  The latter implies that $\|(u,a)\|_{H^1(\Omega)\times \R}=1$.  Moreover, by pairing $\mathcal S_g^*(u,a)$ in $L^2(\Omega)$ with $h\in C^{2}_c(\Omega)$, and using (\ref{eq:contra}), we see $\mathcal S_g^*(u,a)=0$ holds weakly, so that $(u,a)$ is a non-trivial element of the kernel of $\mathcal S_g^*$.  This is a contradiction. Thus (\ref{coest-1}) holds uniformly for $\epsilon>0$ small, as asserted. 

The uniformity of (\ref{coest-1}) in $\epsilon>0$ allows us to promote this estimate to the weighted coercivity estimate (\ref{eq:coest}) exactly as in \cite[p. 149-150]{cor:schw}, using the co-area formula and integration by parts. Indeed, for any $u \in C^2(\overline \Omega)$, and any sufficiently small $d_1 >0$, we have that
$$ \int\limits_0^{d_1} \rho'(\epsilon) \| u \|_{H^2(\Omega_{\epsilon}\setminus \overline{\Omega_{d_1}})}^2 \; d\epsilon = \|u\|^2_{H^2_{\rho}(\Omega\setminus \overline{\Omega_{d_1}})}.$$  
With $C_0 = \rho(d_1)=\int\limits_0^{d_1} \rho'(\epsilon)\; d\epsilon>0$ and (\ref{coest-1}), this implies 
\begin{align*}
C_0 (\|u\|^2_{H^2(\Omega_{d_1})}+a^2)+ \|u\|^2_{H^2_{\rho}(\Omega\setminus \overline{\Omega_{d_1}})}  \leq C^2 \int\limits_0^{d_1} \rho'(\epsilon) \|\mathcal S_g^*(u,a)\|^2_{L^2(\Omega_{\epsilon})}\; d\epsilon\\
\leq  C^2 C_0 \|\mathcal S_g^*(u,a)\|^2_{L^2(\Omega_{d_1})} +  C^2 \|\mathcal S_g^*(u,a)\|^2_{L^2_{\rho}(\Omega\setminus \overline{\Omega_{d_1}})}. 
\end{align*}
By the density of $C^2(\overline \Omega)$ in $H^2_\rho (\Omega)$ (cf. \cite[Lemma 2.1]{cs:ak}), (\ref{eq:coest}) now follows easily. \end{proof}


\subsection{Variational solution of the linearized equation} Under the assumption that $\mathcal{S}_g^*$ has trivial kernel, solutions to $\mathcal{S}_g(h)=(\sigma, \tau)$ for $(\sigma, \tau)\in L^{2}_{\rho ^{-1}}(\Omega)\times \mathbb{R}$ can be obtained from a standard variational argument.   

\begin{prop} [\protect{Cf. \cite[Proposition 3.6]{cor:schw}}] \label{varsol} Let $(\overline \Omega, g)$ be as in Theorem \ref{locdef}. Let $(\sigma, \tau)\in L^{2}_{\rho^{-1}}(\Omega)\times \mathbb{R}$. Define the functional $\mathcal{F}:H^{2}_{\rho}(\Omega)\times \mathbb{R}\rightarrow \mathbb R$ by \be \mathcal{F}(u,a)=\int\limits_{\Omega} \left(\frac{1}{2}\mid \mathcal{S}_g^{*}(u,a)\mid^{2}\rho - \sigma u\right)\, d\mu_g -a\tau.\ee  
Then $\mathcal F$ has a unique critical point $(u,a)\in H^2_{\rho}(\Omega)\times \mathbb R$.  This critical point is the global minimizer of $\mathcal F$ and it is a weak solution of the equation 
 $\mathcal S_g (\rho \mathcal S_g^*(u,a))= (\sigma, \tau).$
There is a constant $C>0$ such that for every $(\sigma , \tau)\in L^{2}_{\rho^{-1}}(\Omega)\times \mathbb{R}$ the  minimizer $(u,a) \in H^{2}_{\rho}(\Omega)\times \mathbb{R}$ of the corresponding functional satisfies $\|(u, a)\|_{H^2_{\rho}(\Omega)\times \R}\leq C \|(\sigma, \tau)\|_{L^2_{\rho^{-1}}(\Omega)\times \R}.$
\end{prop}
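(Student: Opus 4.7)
The plan is to apply the Lax--Milgram theorem, using Proposition \ref{prop:coest} to supply coercivity. Let $\mathcal{H} := H^2_\rho(\Omega) \times \R$ with its natural Hilbert structure, and consider the bilinear form
\[
B((u,a),(v,b)) \ := \ \int_\Omega \rho \, \la \mathcal{S}_g^*(u,a), \mathcal{S}_g^*(v,b)\ra_g \, d\mu_g,
\]
which is symmetric, bounded on $\mathcal{H}\times \mathcal{H}$ (because $\mathcal{S}_g^*$ sends $\mathcal{H}$ boundedly into $L^2_\rho$), and coercive by \eqref{eq:coest}. The data define the linear form $\ell(v,b) := \int_\Omega \sigma v \, d\mu_g + b\tau$, which is continuous on $\mathcal{H}$ since Cauchy--Schwarz gives $|\int_\Omega \sigma v \, d\mu_g| \leq \|\sigma\|_{L^2_{\rho^{-1}}} \|v\|_{L^2_\rho}$.

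First I would invoke Lax--Milgram to produce the unique $(u,a) \in \mathcal{H}$ with $B((u,a),(v,b)) = \ell(v,b)$ for all $(v,b) \in \mathcal{H}$. Since $\mathcal{F}(u,a) = \tfrac12 B((u,a),(u,a)) - \ell(u,a)$, expanding $\mathcal{F}(u+tv,a+tb)$ in $t$ shows that $(u,a)$ is the unique critical point of $\mathcal{F}$; strict convexity (from coercivity of $B$) identifies it as the global minimizer.

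Next I would interpret the Euler--Lagrange identity as the weak form of the equation. Setting $h := \rho \, \mathcal{S}_g^*(u,a) \in L^2_{\rho^{-1}}(\mathrm{Sym}^2(T^*\Omega))$, the identity $B((u,a),(v,b)) = \ell(v,b)$ reads
\[
\int_\Omega \la h, \mathcal{S}_g^*(v,b)\ra_g \, d\mu_g \ = \ \int_\Omega \sigma v \, d\mu_g + b\tau.
\]
Testing against $(v,0)$ with $v \in C^\infty_c(\Omega)$, integration by parts is legitimate and yields $L_g h = \sigma$ in the distributional sense on $\Omega$. Testing against $(0,b)$ gives $\tfrac{b}{2}\int_\Omega \rho \, \tr_g \mathcal{S}_g^*(u,a) \, d\mu_g = b\tau$, i.e.\ $DV_g(h) = \tau$. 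Together this is precisely $\mathcal{S}_g(h) = (\sigma,\tau)$ weakly.

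Finally, for the bound I would test the identity against $(u,a)$ itself, so that $B((u,a),(u,a)) = \ell(u,a)$. The coercivity estimate \eqref{eq:coest} gives $\|(u,a)\|_{\mathcal{H}}^2 \leq C^2 B((u,a),(u,a))$, while Cauchy--Schwarz bounds the right side by $\|(\sigma,\tau)\|_{L^2_{\rho^{-1}}\times \R} \, \|(u,a)\|_{\mathcal{H}}$; dividing yields the desired estimate with constant $C^2$. There is no real obstacle here, as all the analytic difficulty is packaged into the already-established Proposition \ref{prop:coest}; the only point requiring mild care is that $\rho$ and its derivatives vanish to infinite order at $\partial\Omega$, which is what makes $\mathcal{S}_g^*$ well-defined and bounded from $\mathcal{H}$ to $L^2_\rho$ and lets us extract the PDE via compactly supported test tensors.
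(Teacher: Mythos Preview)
Your proof is correct and essentially the same as the paper's. The paper argues by direct minimization (showing $\inf\mathcal{F}\le 0$, using coercivity \eqref{eq:coest} to get boundedness below and existence of a minimizer via ``standard Hilbert space arguments,'' then strict convexity for uniqueness), whereas you package the same content as an application of Lax--Milgram to the bilinear form $B$; the Euler--Lagrange interpretation and the final estimate are obtained identically in both.
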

\begin{proof} Let $\mu=\inf \{\mathcal{F}(u,a): (u,a) \in H^{2}_{\rho}(\Omega)\times \mathbb{R}\}$.  The choice $(u,a)= (0,0)$ shows that $\mu\leq 0$.  The coercivity estimate (\ref{eq:coest}) shows that $\mu$ is finite. Standard Hilbert space arguments exactly as in \cite[p. 150-152]{cor:schw} show that a minimizer $(u,a)\in H^2_{\rho}(\Omega)\times \R$ of $\mathcal F$ exists. 

If $(u, a)\neq (\hat u, \hat a)\in H^2_{\rho}(\Omega)\times \R$, then $\mathcal S_g^*(u-\hat u, a-\hat a)\neq 0$, and the map $t\mapsto\mathcal{F} \left( (1-t)(u,a)+t (\hat u, \hat a) \right)$ is strictly convex. This shows that $(u, a)$ is the unique critical point and in particular the only global minimizer of $\mathcal{F}$.

The Euler-Lagrange condition for the critical point $(u, a)$ of $\mathcal F$ gives that for all $(v,b)\in C^2_c(\Omega)\times \R$, $$\int_{\Omega} \mathcal S_g^*(u,a)\cdot \mathcal S_g^*(v,b)\rho\; d\mu_g = \int_{\Omega} \sigma v \; d\mu_g  + \tau b.$$  Thus $(u, a)$ is a weak solution of  $\mathcal S_g (\rho \mathcal S_g^*(u,a))= (\sigma, \tau)$.

Finally, using the coercivity estimate (\ref{eq:coest}), Cauchy-Schwarz, and $\mu\leq 0$, we obtain that
\begin{eqnarray*}
\frac{1}{2C}\|(u, a)\|_{H^2_{\rho}(\Omega)\times \mathbb{R}}^2&\leq&  \int\limits_{\Omega}\frac{1}{2}\mid \mathcal{S}_g^{*}(u,a)\mid^{2}\rho d\mu_g\\
&=&  \mu+ \int_{\Omega} \sigma u \, d\mu_g +a\tau\\ & \leq & \|(\sigma, \tau)\|_{L^2_{\rho^{-1}}(\Omega)\times \mathbb{R}}\cdot  \|(u,a)\|_{H^2_{\rho}(\Omega)\times \mathbb{R}}. 
\end{eqnarray*}
\end{proof}


\subsection{Pointwise estimates of the variational solution} \label{subsection:pointwise}
We will use the following function spaces: 
\begin{align*}
\mathcal B_0 & := \left( C^{0,\alpha}_{\phi, \phi^{4+\frac{n}{2}}\rho^{-\frac{1}{2}}}(\Omega)\cap L^2_{\rho^{-1}}(\Omega)\right) \times \R\\
\mathcal B_2 &:= C^{2,\alpha}_{\phi, \phi^{2+\frac{n}{2}}\rho^{-\frac{1}{2}}}(\text{Sym}^2(T^*\Omega))\cap L^2_{\rho^{-1}}(\text{Sym}^2(T^*\Omega)) \\
\mathcal B_4&:= \left( C^{4,\alpha}_{\phi, \phi^{\frac{n}{2}}\rho^{\frac{1}{2}}}(\Omega)\cap H^2_{\rho}(\Omega)\right) \times \R
\end{align*} 
with Banach norms
\begin{align*}
\|(\sigma, \tau)\|_0 & := |\tau|+ \|\sigma\|_{L^2_{\rho^{-1}}}+ \|\sigma\|_{C^{0,\alpha}_{\phi, \phi^{4+\frac{n}{2}}\rho^{-\frac{1}{2}}}}\\
\|h\|_2 &: = \|h\|_{L^2_{\rho^{-1}}}+ \|h\|_{C^{2,\alpha}_{\phi, \phi^{2+\frac{n}{2}}\rho^{-\frac{1}{2}}}}\\
\|(u,a)\|_4 & := |a|+\|u\|_{H^2_{\rho}}+ \|u\|_{C^{4,\alpha}_{\phi, \phi^{\frac{n}{2}}\rho^{\frac{1}{2}}}}\; .
\end{align*}
The operator $\rho \mathcal S_g^*$ is continuous from $C^{4,\alpha}_{\phi, \phi^{\frac{n}{2}}\rho^{\frac{1}{2}}}(\Omega)\times \R$ to the space of $C^{2,\alpha}_{\phi, \phi^{2+\frac{n}{2}}\rho^{-\frac{1}{2}}} (\Omega)$ sections of $\text{Sym}^2(T^*\Omega)$. The operator $\mathcal S_g$ is continuous from the space of $C^{2,\alpha}_{\phi, \phi^{2+\frac{n}{2}}\rho^{-\frac{1}{2}}}(\Omega)$ sections of $\text{Sym}^2(T^*\Omega)$ to  $C^{0,\alpha}_{\phi, \phi^{4+\frac{n}{2}}\rho^{-\frac{1}{2}}}(\Omega)\times \R$. 

\begin{prop} \label{prop:ptwise} Let $(\overline \Omega, g)$ be as in Theorem \ref{locdef}. There exists a constant $C>0$ with the following property. Given $(\sigma, \tau)\in \mathcal B_0$, there is $(u,a)\in \mathcal B_4$ so that $\mathcal{S}_g(h)= (\sigma, \tau)$, $\|(u,a)\|_4 \leq C \|(\sigma, \tau)\|_0$, and $\|h\|_2\leq C \|(\sigma, \tau)\|_0$ where  $h= \rho \mathcal S_g^*(u,a)$. 
\end{prop}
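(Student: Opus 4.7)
The plan is to produce $(u, a)$ by the variational method of Proposition \ref{varsol} and then promote the resulting weighted Sobolev bound to the weighted H\"older bounds defining $\mathcal{B}_4$ and $\mathcal{B}_2$ by a scaled interior Schauder argument on the balls $B(x, \phi(x)/2)$. Since $\mathcal{B}_0$ embeds continuously in $L^2_{\rho^{-1}} \times \R$, Proposition \ref{varsol} applied to $(\sigma, \tau)$ yields a weak solution $(u, a) \in H^2_\rho(\Omega) \times \R$ of
$$\mathcal{S}_g(\rho\,\mathcal{S}_g^*(u, a)) = (\sigma, \tau)$$
with $\|(u, a)\|_{H^2_\rho \times \R} \leq C\|(\sigma, \tau)\|_0$. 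Setting $h := \rho\,\mathcal{S}_g^*(u, a)$, the pointwise expression for $\mathcal{S}_g^*$ and the bound $\rho \leq 1$ give $\|h\|_{L^2_{\rho^{-1}}}^2 = \int_\Omega \rho\,|\mathcal{S}_g^*(u,a)|^2\,d\mu_g \leq C \|(u,a)\|_{H^2_\rho\times \R}^2$. A direct symbol computation shows that $L_g L_g^*$ acting on scalars has principal symbol $(n-1)|\xi|^4$, so $u \mapsto L_g(\rho L_g^*(u))$ is a fourth-order uniformly elliptic operator on compact subsets of $\Omega$ (where $\rho$ is smooth and positive); interior elliptic regularity thus promotes $u$ to $C^{4,\alpha}_{\mathrm{loc}}(\Omega)$ and hence $h$ to $C^{2,\alpha}_{\mathrm{loc}}(\Omega)$.

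To upgrade these to weighted H\"older estimates, I would fix $x \in \Omega$, set $r = \phi(x)/2$, and work on $B(x, r) \subset \Omega$. Because $\phi = d^2$ near $\partial \Omega$ and $d$ is $1$-Lipschitz, a short computation shows that $|1/d(z) - 1/d(x)| \leq 1$ on $B(x, r)$ for $d(x)$ small, so $\rho(z) = e^{-1/d(z)}$ is comparable to $\rho(x)$ throughout that ball, and the metric coefficients, together with $\rho$ and its derivatives, are controlled in the natural scale-invariant fashion. Rescaling $z = x + ry$ and dividing the equation through by $\rho(x) r^{-4}$ converts the problem into a uniformly elliptic fourth-order equation on $B(0, 1)$ with $C^{0,\alpha}$ coefficients independent of $x$, right-hand side $r^4\rho(x)^{-1}\tilde\sigma$, and $a$-dependent lower-order terms. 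Interior Schauder estimates, combined with an $L^2 \to L^\infty$ bootstrap (via $W^{4, p}$ regularity and Sobolev embedding, iterated if necessary), give
$$\|\tilde u\|_{C^{4,\alpha}(B(0, 1/2))} \leq C\Big[\tfrac{r^4}{\rho(x)}\|\tilde\sigma\|_{C^{0,\alpha}(B(0, 1))} + |a| + \rho(x)^{-1/2} r^{-n/2}\|u\|_{L^2_\rho(B(x, r))}\Big].$$

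Undoing the rescaling, multiplying through by $\phi(x)^{n/2}\rho(x)^{1/2}$ and taking supremum in $x$ then yields the $\mathcal{B}_4$ bound on $(u, a)$, while the $\mathcal{B}_2$ bound on $h$ follows from $h = \rho\,\mathcal{S}_g^*(u, a)$, the continuity of differentiation as a map $C^{\ell,\alpha}_{\phi, \varphi} \to C^{\ell-1,\alpha}_{\phi, \phi\varphi}$, and the $L^2_{\rho^{-1}}$ bound on $h$ from the first step. The principal obstacle is the careful bookkeeping of weights: one must verify that the factor $\rho(x)$ extracted from the principal part of the equation cancels exactly against the $\rho^{-1/2}$ in the $\sigma$-weight and the $\rho^{1/2}$ in the $u$-weight, so that all powers of $\rho$ close under the supremum in $x$, and that the $\phi$-powers match $\phi^{4+n/2}$, $\phi^{2+n/2}$, and $\phi^{n/2}$ as dictated by the definitions of $\mathcal{B}_0$, $\mathcal{B}_2$, and $\mathcal{B}_4$. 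A related technical wrinkle is that for large $n$ the direct Sobolev embedding $H^2 \hookrightarrow L^\infty$ fails, so the passage to $L^\infty$ in the Schauder estimate must be carried out by an iterative $W^{4, p}$ bootstrap through the equation rather than in a single step.
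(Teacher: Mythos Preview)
Your proposal is correct and follows essentially the same route as the paper: obtain $(u,a)$ from Proposition~\ref{varsol}, then upgrade the $H^2_\rho \times \R$ bound to weighted H\"older bounds via a scaled interior Schauder estimate on the balls $B(x,\phi(x)/2)$, using that $\rho(z)/\rho(x)$ is uniformly bounded on such balls. The paper organizes the bookkeeping slightly differently, dividing the equation through by $\rho$ as a \emph{function} to obtain the operator $P(u)=\rho^{-1}L_g\rho L_g^* u$, whose lower-order coefficients satisfy $\|b_\beta\|_{C^{0,\alpha}_{\phi,\phi^{4-|\beta|}}}<\infty$, and then invoking a single weighted Schauder estimate (Appendix~\ref{sec:Schauder}) of the form $\|u\|_{C^{4,\alpha}_{\phi,\varphi}}\leq C(\|Pu\|_{C^{0,\alpha}_{\phi,\phi^4\varphi}}+\|u\|_{L^2_{\phi^{-n}\varphi^2}})$ with $\varphi=\phi^{n/2}\rho^{1/2}$; your division by the constant $\rho(x)$ at each center is equivalent. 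Your remark about the $L^2\to L^\infty$ bootstrap in high dimensions is a genuine technical point that the paper absorbs into the phrase ``standard interior Schauder estimate''.
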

\begin{proof} Fix $(\sigma, \tau)\in \mathcal B_0$. Let $(u,a)\in H^2_{\rho}(\Omega)\times \R$ be the weak solution of 
$\mathcal S_g \rho \mathcal S_g^*(u,a) = (\sigma, \tau)$ from Proposition \ref{varsol}. Let $h= \rho \mathcal S_g^*(u,a)\in L^2_{\rho^{-1}}(\Omega)$.  Elliptic regularity for the operator $\rho^{-1}L_g \rho L_g^*$ gives that $h\in C^{2,\alpha}(\Omega)$.  

Note that $\rho^{-1}L_g(\rho L_g^* u)= \rho^{-1} \sigma - \frac{a}{2} \rho^{-1}L_g(\rho g)$. We apply the Schauder interior estimates in the form discussed in Appendix \ref{sec:Schauder}.  We also use the bound $\|(u,a)\|_{H^2_{\rho}\times \R} \leq C \|(\sigma, \tau)\|_0$ from Proposition \ref{varsol} and the obvious estimate $\|h\|_{L^2_{\rho^{-1}}}  \leq C \|(u,a)\|_{H^2_{\rho}\times \R}$. The constant $C$ may change from line to line. 
\begin{eqnarray*} 
\|h\|_{C^{2,\alpha}_{\phi, \phi^{2+\frac{n}{2}}\rho^{-1/2}}} &=&\|\rho \mathcal S_g^*(u, a)\|_{C^{2,\alpha}_{\phi, \phi^{2+\frac{n}{2}}\rho^{-1/2}}} \leq  C \|(u, a)\|_{C^{4,\alpha}_{\phi, \phi^{n/2}\rho^{1/2}}\times \mathbb R}\\
&\leq & C  \Big( \|\rho^{-1} \sigma - \frac{a}{2} \rho^{-1}L_g(\rho g)\|_{C^{0,\alpha}_{\phi, \phi^{4+\frac{n}{2}}\rho^{1/2}}}+ \|(u, a)\|_{L^2_{\rho}\times \mathbb R} \Big) \\
& \leq & C\Big(  \|\rho^{-1} \sigma \|_{C^{0,\alpha}_{\phi,\phi^{4+\frac{n}{2}}\rho^{1/2}}}+ \|(u,a)\|_{L^2_{\rho}\times \mathbb R}\Big)\\
&\leq & C \Big(  \|\sigma \|_{C^{0,\alpha}_{\phi, \phi^{4+\frac{n}{2}} \rho^{-1/2}}}+ \|(\sigma, \tau)\|_{L^2_{\rho^{-1}}\times \mathbb R} \Big)\\
&=& C \|(\sigma, \tau)\|_0.
\end{eqnarray*}
\end{proof}


\subsection{Solving the non-linear problem by iteration} \label{sec:nonlinear}

The goal of this section is to obtain a solution of the non-linear problem $\Theta(g+h)=\Theta(g)+(\sigma, \tau)$  using the linear theory from Section  \ref{subsection:pointwise} to iteratively adjust approximate solutions.  The proof of Theorem \ref{locdef} will be complete once Proposition \ref{non-lin} has been established.  

We first make a general remark about the quadratic remainder term in the Taylor expansion of $h \mapsto \Theta(g + h)$ at an arbitrary $C^{2, \alpha}(\overline \Omega)$ metric $g$. We have that 
$$\Theta (g + h) = (R(g + h), V(g + h)) = (R(g), V(g)) + \mathcal{S}_{g} (h) + Q_g (h)$$ 
where $\mathcal{S}_g = D \Theta_g$ is the linearization of $\Theta$ at $g$ and where $Q_g$ is the ``quadratic remainder" term. More precisely, in a fixed coordinate system, $\mathcal{S}_g (h)$ (respectively $Q_g (h)$) is a homogeneous linear (quadratic) polynomial in $h_{ij}, \partial_k h_{ij}$ and $\partial^2_{k\ell} h_{ij}$ whose coefficients are smooth functions of $g_{ij}, \partial_{k} g_{ij}, \partial_{k\ell}^2 g_{ij}$ (and $h_{ij}, \partial_k h_{ij}$, $\partial_{k\ell}^2 h_{ij}$). It follows that there is a constant $D > 0$ so that for any open subset $U \subset \subset \Omega$ we have that $||Q_g (h)||_{C^{\alpha} (U) \times \mathbb{R}} \leq D || h ||^2_{C^{2, \alpha} (U)}$. Using this estimate for $U=B(x,\phi(x))$ a small ball near the boundary, and for $U$ the complement of a thin collar neighborhood of $\p \Omega$, we obtain 
$$ \|Q_g(h)\|_0 \leq D \|h\|_{2}^2$$
where $D$ might have changed. Here, we also used that the weight $\rho$ tends to zero faster on approach to the boundary than any power of the distance function. Enlarging $D$ slightly if necessary, we also see that 
\begin{eqnarray} \label{eqn:Taylorquadratic} \|Q_\gamma(h)\|_0 \leq D \|h\|_{2}^2 \end{eqnarray}
holds for every metric $\gamma$ that is sufficiently close to $g$ in $C^{2, \alpha}(\overline \Omega)$. Similarly, we have that 
\begin{eqnarray} \label {eqn:Taylorlinear} || \mathcal{S}_{\gamma}(h) - \mathcal{S}_{\gamma'}(h) ||_0 \leq D || h  ||_2 || \gamma - \gamma' ||_2\end{eqnarray} provided that $\gamma, \gamma'$ are $C^{2, \alpha}(\overline \Omega)$ close to $g$.  In (\ref{eqn:Taylorquadratic}) and (\ref{eqn:Taylorlinear}) the weighted $L^2$ and Schauder and norms, in whose definition we use the distance function to the boundary of $\Omega$, are computed with respect to the fixed metric $g$, cf. Remark  \ref{rem:nearbymetrics}.

\begin{prop} \label{non-lin} Let $(\overline \Omega, g)$ be as in Theorem \ref{locdef}. Let $C>0$ be the constant from Proposition \ref{prop:ptwise}.  There exists $\epsilon_0>0$ so that given any $(\sigma, \tau)\in \mathcal B_0$ with $\|(\sigma, \tau)\|_0\leq \epsilon_0$, there exists $(u,a)\in \mathcal B_4$ so that for $h= \rho \mathcal S_g^*(u,a)$, $g+h$ is a metric with $\Theta(g+h)=\Theta(g)+(\sigma, \tau)$, and such that $\|(u,a)\|_4\leq 2 C \|(\sigma, \tau)\|_0$ and $\|h\|_2\leq 2C \|(\sigma, \tau)\|_0$. \end{prop}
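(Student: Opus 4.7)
The plan is a Picard-type iteration that linearizes at the fixed metric $g$ at every stage and inverts using Proposition \ref{prop:ptwise}. Crucially, Proposition \ref{prop:ptwise} delivers not only a linearized solution $h$ but an explicit potential $(u,a)$ with $h = \rho \mathcal{S}_g^*(u,a)$, so I would build $(u,a)$ as the limit of a series $\sum_{k \geq 1} (u_k, a_k)$ in $\mathcal{B}_4$ and then recover $h = \rho \mathcal{S}_g^*(u, a) = \sum_k h_k$ using continuity of $\rho\mathcal{S}_g^*:\mathcal B_4 \to \mathcal B_2$.

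Concretely, set $(\sigma_1, \tau_1) := (\sigma, \tau)$ and $H_0 := 0$. At step $k \geq 1$, Proposition \ref{prop:ptwise} produces $(u_k, a_k) \in \mathcal B_4$ with $h_k := \rho\mathcal{S}_g^*(u_k,a_k)$ solving $\mathcal{S}_g(h_k) = (\sigma_k,\tau_k)$ and satisfying $\|(u_k,a_k)\|_4, \|h_k\|_2 \leq C\|(\sigma_k,\tau_k)\|_0$. Setting $H_k := \sum_{j=1}^k h_j$, I would then define the next source term so as to cancel the newly introduced quadratic error,
\[ (\sigma_{k+1}, \tau_{k+1}) := -\bigl(Q_g(H_k) - Q_g(H_{k-1})\bigr) = -\bigl(2 B_g(H_{k-1}, h_k) + Q_g(h_k)\bigr), \]
where $B_g$ is the symmetric bilinear form polarizing $Q_g$. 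A routine induction then gives $\mathcal{S}_g(H_k) + Q_g(H_{k-1}) = (\sigma, \tau)$, so the target identity $\Theta(g+h) = \Theta(g) + (\sigma,\tau)$ will drop out by passing to the limit.

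For convergence I would combine the quadratic bound \eqref{eqn:Taylorquadratic} (and its bilinear polarization $\|B_g(H, h)\|_0 \leq D\|H\|_2 \|h\|_2$) with Proposition \ref{prop:ptwise} to obtain
\[ \|(\sigma_{k+1}, \tau_{k+1})\|_0 \leq D\bigl(2\|H_{k-1}\|_2 + \|h_k\|_2\bigr)\|h_k\|_2. \]
Under the inductive hypotheses $\|(\sigma_k,\tau_k)\|_0 \leq 2^{-(k-1)}\|(\sigma,\tau)\|_0$ and $\|H_{k-1}\|_2 \leq 2C\|(\sigma,\tau)\|_0$ (both trivial when $k=1$), this simplifies to $\|(\sigma_{k+1},\tau_{k+1})\|_0 \leq 5DC^2\|(\sigma,\tau)\|_0 \cdot \|(\sigma_k,\tau_k)\|_0$. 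Choosing $\epsilon_0 \leq (10DC^2)^{-1}$ closes the induction and yields the geometric contraction $\|(\sigma_{k+1},\tau_{k+1})\|_0 \leq \tfrac12 \|(\sigma_k,\tau_k)\|_0$, hence $\|(u_k,a_k)\|_4 \leq C 2^{-(k-1)}\|(\sigma,\tau)\|_0$ and absolute convergence of $\sum_k (u_k,a_k)$ in $\mathcal B_4$ to some $(u,a)$ with $\|(u,a)\|_4 \leq 2C\|(\sigma,\tau)\|_0$; the analogous bound $\|h\|_2 \leq 2C\|(\sigma,\tau)\|_0$ holds as well.

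Finally I would verify that $g + h$ is a Riemannian metric and that the target identity holds on the nose. The weight $\phi^{2+n/2}\rho^{-1/2}$ in the definition of $\|\cdot\|_2$ is bounded below uniformly on any fixed compact subset of $\Omega$ and blows up exponentially on approach to $\partial\Omega$, so the pointwise bound on $|h|$ coming from the $\mathcal B_2$ norm makes $|h|$ uniformly small on $\overline\Omega$ (and vanishing at the boundary) after possibly shrinking $\epsilon_0$, which gives positive definiteness of $g + h$. Passing to the limit in $\mathcal{S}_g(H_k) + Q_g(H_{k-1}) = (\sigma, \tau)$, using continuity of $\Theta$ on a $C^{2,\alpha}$-neighborhood of $g$, then gives $\Theta(g + h) = \Theta(g) + (\sigma, \tau)$. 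The main technical point throughout is that the three weighted scales have been calibrated precisely so that $\rho\mathcal{S}_g^*:\mathcal B_4 \to \mathcal B_2$, $\mathcal{S}_g: \mathcal B_2 \to \mathcal B_0$, and $Q_g:\mathcal B_2\times\mathcal B_2 \to \mathcal B_0$ are all bounded; checking that the iteration closes up in these specific weighted norms, rather than the convergence argument per se, is the real content of the proof.
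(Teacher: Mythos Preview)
Your iteration is the same scheme as the paper's: at each stage you solve $\mathcal{S}_g(h_k)$ equal to the current defect and add $h_k$ to the running sum. The paper packages the defect as $(R(g)+\sigma,V(g)+\tau)-\Theta(\gamma_m)$ and estimates it by expanding $\Theta(\gamma_{m+1})$ around $\gamma_m$ (using \eqref{eqn:Taylorquadratic} and \eqref{eqn:Taylorlinear}), obtaining decay $\|(\sigma,\tau)\|_0^{1+m\delta}$; you write the same defect as $Q_g(H_k)-Q_g(H_{k-1})$ and estimate it directly, obtaining geometric decay $2^{-k}$. Either bookkeeping closes the iteration and gives the stated $2C$ bound.

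There is, however, a genuine slip in your error analysis. The remainder $Q_g(h)=\Theta(g+h)-\Theta(g)-\mathcal{S}_g(h)$ is \emph{not} a quadratic form in $h$: the scalar curvature (and the volume integrand $\sqrt{\det(g+h)}$) depend non-polynomially on $h$, and the paper is careful to say only that $Q_g(h)$ can be written as a quadratic expression in $h,\partial h,\partial^2 h$ whose coefficients themselves depend on $h$. So there is no fixed bilinear $B_g$ with $Q_g(H)=B_g(H,H)$, and your polarization identity $Q_g(H_k)-Q_g(H_{k-1})=2B_g(H_{k-1},h_k)+Q_g(h_k)$ is not available. What you actually need is the Lipschitz bound
\[
\|Q_g(H_k)-Q_g(H_{k-1})\|_0 \;\le\; D\bigl(\|H_k\|_2+\|H_{k-1}\|_2\bigr)\|h_k\|_2,
\]
which follows from the same Taylor-remainder analysis that gives \eqref{eqn:Taylorquadratic} (write $Q_g(H_k)-Q_g(H_{k-1})$ as an integral of $D^2\Theta$ along the segment and use that the second derivative is bounded on a $C^{2,\alpha}$-neighborhood of $g$; the paper uses exactly this in the proof of Proposition~\ref{ctsdep}). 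With this correction your induction goes through unchanged, and the rest of your argument is fine.
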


\begin{proof}  Let $(u_0, a_0) \in \mathcal{B}_4$ be the solution of $\mathcal{S}_g\rho \mathcal{S}_g^*(u_0,a_0)= (\sigma, \tau)$ from Proposition \ref{prop:ptwise} and let $h_0=\rho \mathcal S_g^*(u_0,a_0)$, so that  $$\|(u_0,a_0)\|_4\leq C\|(\sigma, \tau)\|_0 \text{  and  } \|h_0\|_2\leq C \|(\sigma, \tau)\|_0.$$ From (\ref{eqn:Taylorquadratic}) we obtain that  $\|Q_g(h_0)\|_0 \leq D \|h_0\|_{2}^2$ and hence  
$$\|\Theta(g+h_0)-(R(g)+\sigma, V(g)+\tau)\|_0=\|Q_g(h_0)\|_0\leq D C^2 \|(\sigma, \tau)\|_0^2.$$
We let $\gamma_1:= g + h_0$. Note that $\gamma_1$ is a $C^{2, \alpha}(\overline \Omega)$ metric provided $||(\sigma, \tau)||_0$ is sufficiently small. Fix $\delta \in (0, 1)$. We require that $\epsilon_0 >0$ to be so small that $D C^2 \epsilon_0^{1 - \delta} \leq 1$.  
We now proceed inductively: 

\begin{lemma}  [\protect{Cf. \cite[Proposition 3.9]{cor:schw}}] Fix $\delta\in (0,1)$. Let $C$ be the constant from Proposition \ref{prop:ptwise}.  There exists $\epsilon_0\in (0,\frac{1}{2})$ depending only on $\delta$, $\Omega$, and $g \in C^{4, \alpha}(\overline{\Omega})$ such that the following holds.  Suppose that $m \geq 1$ and that we have constructed $(u_0, a_0), \ldots , (u_{m-1}, a_{m-1}) \in \mathcal{B}_4$, $h_0,\ldots,h_{m-1}\in \mathcal{B}_2$ where $h_p = \rho \mathcal{S}_g^* (u_p, a_p)$, and metrics $\gamma_1,\ldots,\gamma_m \in C^{2,\alpha}(\overline{\Omega})$ where $\gamma_{j} = g + \sum_{p=0}^{j-1} h_p$. Assume that $\|(\sigma, \tau)\|_0\leq \epsilon_0$ and that for all $0 \leq p\leq m-1$,  
\begin{eqnarray} \label{eqn:aux11}  \|(u_p, a_p)\|_4 \leq C \|(\sigma,\tau)\|_0^{(1+p\delta)}     \ \      \text{ and }        \ \       \|h_p\|_2\leq C \|(\sigma,\tau)\|_0^{(1+p\delta)}, \end{eqnarray}
and that for all $1 \leq j\leq m$,
\begin{eqnarray} \label{eqn:aux12} \|\Theta(\gamma_j) - (R(g)+\sigma, V(g)+\tau)  \|_0\leq \|(\sigma,\tau)\|_0^{(1+j\delta)}. \end{eqnarray}
If we define $h_m:=\rho\mathcal S_g^*(u_m,a_m)$ where $(u_m, a_m)$ is the variational solution to $\mathcal S_g \rho \mathcal S_g^*(u_m, a_m)=(R(g)+\sigma, V(g)+\tau)-\Theta(\gamma_m)$ from Proposition \ref{prop:ptwise}, and if we let $\gamma_{m+1}:=\gamma_m+h_m$,  then $\gamma_{m+1}$ is a $C^{2,\alpha}(\overline{\Omega})$ metric and the estimates (\ref{eqn:aux11}) and (\ref{eqn:aux12}) hold for $p=m$ and $j=m+1$.  \label{pic}
\end{lemma}

\begin{proof} We let $\gamma_0 := g$. The induction hypotheses ensure that  $\|g-\gamma_j\|_{C^{2,\alpha}(\overline{\Omega})}$ stays small (depending on $\epsilon_0 > 0$) throughout the iteration. 
Using Proposition \ref{prop:ptwise}, we find $(u_m, a_m)\in \mathcal B_4$ such that $\mathcal S_g \rho \mathcal S_g^*(u_m,a_m)=(R(g)+\sigma, V(g)+\tau)-\Theta(\gamma_m)$. Putting $h_m:=\rho \mathcal S_g^*(u_m, a_m)$, the hypotheses imply the following: 
\begin{align*}
\|(u_m, a_m)\|_4 &\leq C \|(R(g)+\sigma, V(g)+\tau)-\Theta(\gamma_m)\|_0\leq C \|(\sigma, \tau)\|_0^{1+m\delta}, \\ 
\|h_m\|_2 &\leq C \|(R(g)+\sigma, V(g)+\tau)-\Theta(\gamma_m)\|_0\leq C \|(\sigma, \tau)\|_0^{1+m\delta}.
\end{align*}
Note that 
\begin{eqnarray*}
\nonumber \Theta(\gamma_{m+1})&=&\Theta(\gamma_m)+\mathcal{S}_{\gamma_m}(h_m)+Q_{\gamma_m}(h_m)\\&=&(R(g)+\sigma, V(g)+\tau)+\sum\limits_{p=0}^{m-1}[\mathcal{S}_{\gamma_{p+1}}(h_m)-\mathcal{S}_{\gamma_{p}}(h_m)]+Q_{\gamma_m}(h_m).
\end{eqnarray*}
Using (\ref{eqn:Taylorquadratic}), (\ref{eqn:Taylorlinear}) and elementary manipulations, we obtain that
\begin{align*}
& \|(R(g)+\sigma, V(g)+\tau) - \Theta(\gamma_{m+1})\|_0  \leq D \left( \|h_m\|^2_{2}+\|h_m\|_2\sum\limits_{p=0}^{m-1}\|h_p\|_2\right)\\
&\leq D C^2 \left( \|(\sigma, \tau)\|^{(2+2m\delta)}_0 + \|(\sigma, \tau)\|^{2+m\delta}_0\sum\limits_{p=0}^{m-1}\|(\sigma, \tau)\|^{\delta p}_0\right)\\ 
& \leq 2 D C^2 \epsilon_0^{1-\delta}(1 - \epsilon_0^\delta)^{-1} \|(\sigma, \tau)\|_0^{1+(m+1)\delta}.
\end{align*}
Choose $\epsilon_0 >0$ small enough so that $2 D C^2 \epsilon_0^{1-\delta} (1 - \epsilon_0^\delta)^{-1} \leq 1$.
\end{proof}

It follows that the series $\sum_{p=0}^\infty (u_p, a_p)$ converges in $\mathcal {B}_4$ to some $(u, a)$, and that if $h:= \rho \mathcal{S}_g^*(u, a)$, then $\gamma:= g + h$ satisfies $\Theta(\gamma) = (R(g) + \sigma, V(g)+\tau)$. Choosing $\epsilon_0>0$ even smaller if necessary, we obtain that $||(u, a)||_4 \leq 2 C ||(\sigma, \tau)||_0$ and $||h||_2 \leq 2 C ||(\sigma, \tau)||$ from summing the estimates for $(u_p, a_p)$ and $h_p$. This concludes the proof of Proposition \ref{non-lin}. 
\end{proof}

\begin{remark} \label{rem:nearbymetrics} The conclusion of Proposition \ref{non-lin} holds with one choice for $\epsilon_0>0$ and $C>0$ for any metric $g'$ from a small $C^{4, \alpha}(\overline \Omega)$ neighborhood of $g$. To see this, note  that the condition that $\mathcal{S}_{g'}^*$ have only trivial kernel in $H^1_{\mathrm{loc}} (\Omega) \times \mathbb{R}$ is an open condition for $g' \in {C}^{4, \alpha}(\overline \Omega)$. This follows easily from Proposition \ref{prop-bdryexten}. The fundamental coercivity estimate (\ref{coest-1}), and hence (\ref{eq:coest}), holds with a uniform constant $C$ for all metrics $g'$ that are close to $g$ in $C^{4, \alpha} (\overline \Omega)$. The dependence on the metric can easily be made part of the proof.  The derivation of (\ref{coest-1}) is the only indirect argument that was used in the proof of Proposition \ref{non-lin}. We emphasize that the norms of the lower order terms of the operators to which we apply Schauder estimates in the proof of Proposition \ref{prop:ptwise} are uniformly bounded in appropriate spaces, even though the weighted norms as $g'$ varies in a neighborhood of $g$ are not necessarily equivalent.  Thus there is a constant $C$ for which the weighted Schauder estimates will hold for all $g'$ from a $C^{4, \alpha}(\overline \Omega)$ neighborhood of $g$.  
\end{remark}


\subsection{Continuous dependence}

\begin{prop} \label{ctsdep} There exist $\epsilon_0 >0$ and $C>0$ with the following property. If $(\sigma_i, \tau_i) \in \mathcal{B}_0$ with $\|(\sigma_i, \tau_i)\|_0\leq \epsilon_0$ for $i=1, 2$, and if $\gamma_1$ and $\gamma_2$ are the corresponding solutions of $\Theta(\gamma_i)= \Theta(g)+ (\sigma_i, \tau_i)$, $i=1,2$, constructed in the proof of Proposition \ref{non-lin}, then $\|\gamma_1-\gamma_2\|_2\leq C \|(\sigma_1,\tau_1)-(\sigma_2,\tau_2)\|_0$. \end{prop}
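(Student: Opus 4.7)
The plan is to reduce continuous dependence to a single application of the linear theory by exploiting uniqueness of the variational solution. Recall from Proposition \ref{non-lin} that $\gamma_i=g+h_i$ with $h_i=\rho\,\mathcal{S}_g^*(u_i,a_i)$, that $(u_i,a_i)\in\mathcal{B}_4\subset H^2_\rho(\Omega)\times\R$ is the sum of the iteratively produced series, that $\|h_i\|_2\leq 2C\|(\sigma_i,\tau_i)\|_0\leq 2C\epsilon_0$, and that $\Theta(\gamma_i)=\Theta(g)+(\sigma_i,\tau_i)$. Taylor-expanding the last identity at $g$ yields the key relation
\be \label{eqn:ctsdep-key}
\mathcal{S}_g\rho\,\mathcal{S}_g^*(u_i,a_i)=\mathcal{S}_g h_i=(\sigma_i,\tau_i)-Q_g(h_i),
\ee
and since $(u_i,a_i)\in H^2_\rho(\Omega)\times\R$, the uniqueness statement in Proposition \ref{varsol} identifies it with the variational solution of (\ref{eqn:ctsdep-key}).

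Subtracting (\ref{eqn:ctsdep-key}) for $i=1$ and $i=2$ and invoking the linearity of $\mathcal{S}_g\rho\,\mathcal{S}_g^*$, the difference $(u_1-u_2,a_1-a_2)\in H^2_\rho(\Omega)\times\R$ is the variational solution associated with the data $(\sigma_1-\sigma_2,\tau_1-\tau_2)-[Q_g(h_1)-Q_g(h_2)]\in\mathcal{B}_0$. The weighted Schauder bound of Proposition \ref{prop:ptwise} applied to $h_1-h_2=\rho\,\mathcal{S}_g^*(u_1-u_2,a_1-a_2)$ then yields
\be \label{eqn:ctsdep-schauder}
\|h_1-h_2\|_2\leq C\bigl(\|(\sigma_1,\tau_1)-(\sigma_2,\tau_2)\|_0+\|Q_g(h_1)-Q_g(h_2)\|_0\bigr).
\ee

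The principal step is closing (\ref{eqn:ctsdep-schauder}), which requires a Lipschitz companion to the quadratic Taylor estimate (\ref{eqn:Taylorquadratic}). This follows cleanly from (\ref{eqn:Taylorlinear}): since $DQ_g(h)(k)=\mathcal{S}_{g+h}(k)-\mathcal{S}_g(k)$, the fundamental theorem of calculus gives
\be \label{eqn:ctsdep-fta}
Q_g(h_1)-Q_g(h_2)=\int_0^1\bigl[\mathcal{S}_{g+h_2+t(h_1-h_2)}-\mathcal{S}_g\bigr](h_1-h_2)\,dt,
\ee
and inserting (\ref{eqn:Taylorlinear}) under the integral produces
\be \label{eqn:ctsdep-lip}
\|Q_g(h_1)-Q_g(h_2)\|_0\leq D\bigl(\|h_1\|_2+\|h_2\|_2\bigr)\|h_1-h_2\|_2\leq 4CD\epsilon_0\|h_1-h_2\|_2.
\ee
Combining (\ref{eqn:ctsdep-schauder}) and (\ref{eqn:ctsdep-lip}) and shrinking $\epsilon_0$ so that $4C^2D\epsilon_0\leq\tfrac{1}{2}$, one absorbs the $Q_g$ term into the left-hand side to obtain $\|\gamma_1-\gamma_2\|_2=\|h_1-h_2\|_2\leq 2C\|(\sigma_1,\tau_1)-(\sigma_2,\tau_2)\|_0$. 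The one point demanding care is ensuring that the intermediate metrics appearing in (\ref{eqn:ctsdep-fta}) satisfy the ``closeness to $g$'' hypothesis required by (\ref{eqn:Taylorlinear}); this is automatic from $\|h_i\|_2\leq 2C\epsilon_0$ and the exponential decay of the weight at $\partial\Omega$, which makes the weighted norm $\|\cdot\|_2$ dominate the unweighted $C^{2,\alpha}(\overline{\Omega})$ norm on nearby metrics.
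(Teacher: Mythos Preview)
Your argument is correct and close in spirit to the paper's, but it is organized differently in one useful way. Both proofs start from the identity $\mathcal{S}_g(h_1-h_2)=(\sigma_1-\sigma_2,\tau_1-\tau_2)-[Q_g(h_1)-Q_g(h_2)]$ and use the same Lipschitz bound on $Q_g$ (your derivation via the fundamental theorem of calculus and (\ref{eqn:Taylorlinear}) is equivalent to the paper's ``analysis of the remainder term''). Where you differ is in how the weighted estimate for $h_1-h_2$ is obtained: the paper re-derives the Schauder bound and then separately controls $\|(u_1-u_2,a_1-a_2)\|_{H^2_\rho\times\R}$ via the coercivity estimate together with an integration by parts that it explicitly flags as requiring a density argument. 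You instead recognize $(u_1-u_2,a_1-a_2)$ as \emph{the} variational solution for the data on the right of (\ref{eqn:ctsdep-key}) and invoke Proposition~\ref{prop:ptwise} directly, which packages Schauder and coercivity in one step.

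The one point to make precise is your appeal to ``the uniqueness statement in Proposition~\ref{varsol}''. That proposition asserts uniqueness of \emph{critical points} of $\mathcal{F}$ in $H^2_\rho\times\R$, so you must verify that $(u_i,a_i)$ is a critical point for the data $(\sigma_i,\tau_i)-Q_g(h_i)$, i.e., that $\int_\Omega h_i\cdot\mathcal{S}_g^*(v,b)=\int_\Omega L_g(h_i)\,v+DV_g(h_i)\,b$ holds for all $(v,b)\in H^2_\rho\times\R$. This is exactly the integration by parts the paper isolates as the delicate step; it is valid because $h_i\in\mathcal{B}_2$ and its derivatives decay faster than any power of the distance at $\partial\Omega$, so the boundary terms vanish (equivalently, one uses the density of $C^\infty(\overline\Omega)$ in $H^2_\rho$ as the paper does). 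Once this is said, your route is a clean repackaging of the paper's proof, with the advantage that the separate coercivity/density step is absorbed into a single citation of Proposition~\ref{prop:ptwise}.
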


\begin{proof} 
Let $(u_i, a_i)$, $h_i=\rho \mathcal S_g^*(u_i, a_i)$, and $\gamma_i= g+h_i$ be as in Proposition \ref{non-lin}. Then
\begin{align*}
\mathcal S_g(h_1-h_2) &= \mathcal S_g\rho \mathcal S_g^*(u_1-u_2, a_1-a_2) = (\sigma_1-\sigma_2, \tau_1-\tau_2)+ (Q_g(h_2)-Q_g(h_1)) .
\end{align*}
Analysis of the remainder term in the Taylor expansion as in Section \ref{sec:nonlinear} gives
\begin{eqnarray*}
\|Q_g(h_1)-Q_g(h_2)\|_0 &\leq& D \| h_1- h_2\|_2 ( \| h_1\|_2+\| h_2\|_2) \leq 2 C D \epsilon_0 \|h_1- h_2\|_2.
\end{eqnarray*}
Interior Schauder estimates for the operator $\rho^{-1} L_g \rho L_g^*$ give that 
\begin{eqnarray} 
\|h_1- h_2\|_2 &=& \|\rho \mathcal S_g^*(u_1-u_2,a_1- a_2)\|_2  \leq  C \|(u_1-u_2,a_1- a_2)\|_{4} \nonumber  \\
& \leq&   C(\|\rho^{-1}(\sigma_1 -\sigma_2) - \rho^{-1}\frac{a_1 - a_2}{2}L_g (\rho g)\|_{C^{0, \alpha}_{\phi, \phi^{4+\frac{n}{2}} \rho^{1/2}}} \nonumber \\  &&+  \nonumber \|(u_1-u_2,a_1- a_2)\|_{H^2_{\rho}\times \mathbb R} + \|Q_g(h_1)-Q_g(h_2)\|_0 ) \nonumber \\
 & \leq & C(\|\sigma_1- \sigma_2\|_0 + \|(u_1-u_2,a_1- a_2)\|_{H^2_{\rho}\times \mathbb R} + 2 C D\epsilon_0 \|h_1- h_2\|_2). \label{estimateaux}
\end{eqnarray}
By the coercivity estimate (\ref{eq:coest}), 
\begin{eqnarray*}
 \|(u_1-u_2,a_1- a_2)\|^2_{H^2_{\rho}\times \mathbb{R}} & \leq & C \int\limits_{\Omega} \mathcal S_g^*(u_1-u_2,a_1- a_2) \cdot \rho \mathcal S_g^*(u_1-u_2,a_1- a_2)\; d\mu_g \\
&= & C \int\limits_{\Omega} \mathcal S_g^*(u_1-u_2,a_1- a_2) \cdot (h_1-h_2)\; d\mu_g .
\end{eqnarray*}
We would like to integrate by parts in the last term.  Since $\mathcal S_g^*(u_1-u_2, a_1-a_2)\in L^2_{\rho}(\Omega)\cap C^{2,\alpha}_{\phi, \phi^{2+\frac{n}{2}}\rho^{\frac{1}{2}}}(\Omega)$, it is not immediately clear that the boundary terms will vanish. Using that $C^{\infty}(\overline{\Omega})$ is dense in $H^2_{\rho}(\Omega)$ (cf. \cite[Lemma 2.1]{cs:ak}), we can justify the integration by parts using an approximation argument. It follows that
\begin{align*} 
&  \|(  u_1-u_2   ,a_1- a_2 )  \|^2_{H^2_{\rho}\times \mathbb{R}}
\leq C \int\limits_{\Omega} (u_1-u_2,a_1- a_2) \cdot \mathcal S_g (h_1-h_2)\; d\mu_g \\
&\leq  C\|(u_1-u_2,a_1- a_2)\|_{L^2_{\rho} \times \R}  \|(\sigma_1- \sigma_2, \tau_1- \tau_2)+(Q_g( h_2)-Q_g( h_1)) \|_{L^2_{\rho^{-1}}\times \mathbb{R}}\\
& \leq C \|(u_1-u_2,a_1- a_2)\|_{H^2_{\rho} \times \R} \left( \|(\sigma_1-\sigma_2,\tau_1-\tau_2)\|_0 +2 C D \epsilon_0 \|h_1- h_2\|_2\right).
\end{align*}
Together with (\ref{estimateaux}), this completes the proof. 
\end{proof}


\subsection{Higher order regularity of the solution and the proof of Theorem \ref{locdefTake2}}

The non-linear differential operator $u \mapsto \hat P (u) = \rho^{-1} \left( R(g + \rho \mathcal{S}_g^* (u, a) ) - R(g)\right)$ is quasi-linear fourth order elliptic in $u$ provided that $\rho \mathcal{S}_g^*(u, a)$ is sufficiently small. The fourth order part of this operator is equal to 
\begin{eqnarray*}
\frac{1}{2}\gamma^{i\ell} \gamma^{jk} g^{a b} \left( - g_{j\ell} u_{abik} + g_{jk} u_{a b i \ell} + g_{i\ell} u_{a b j k} - g_{ik} u_{abj\ell} \right).
\end{eqnarray*}
Here, $\gamma^{ij} := (g + \rho \mathcal{S}_g^* (u, a))^{-1}_{ij}$. To see ellipticity, note that for $\rho \mathcal{S}_g^* (u, a)$ sufficiently small, $\gamma^{ij}$ is close to $g^{ij}$ and the symbol of the operator is close to $(n-1)|\xi|^4$. The lower order terms may blow up on the boundary. The equation $\hat P(u)  = \rho^{-1} \sigma$ can be cast in a form to which higher order Schauder estimates similar to those in (\ref{wtschk}) of Appendix \ref{sec:Schauder} and bootstrapping can be applied. Note that the right hand side is compactly supported and hence lies in any of the weighted Sobolev spaces we defined. Under the regularity assumptions for $(\overline \Omega, g)$ in the statement of Theorem \ref{locdefTake2}, we obtain that
\begin{equation} \label{regest}
\|u\|_{C^{k, \alpha}_{\phi, \phi^{n/2}\rho^{1/2}}}\leq C(k, \Omega, g) \left(\| \hat Pu\|_{C^{k-4, \alpha}_{\phi, \phi^{4+\frac{n}{2}} \rho^{1/2}}} + \|u\|_{L^2_\rho}\right).
\end{equation}
This implies that
\begin{align*}
\|h\|_{C^{k-2,\alpha}_{\phi,\phi^{2+\frac{n}{2}}\rho^{-1/2}}} &= \|\rho \mathcal S_g^*(u, a)\|_{C^{k-2,\alpha}_{\phi,\phi^{2+\frac{n}{2}}\rho^{-1/2}}}\leq C\|(u, a)\|_{C^{k,\alpha}_{\phi,\phi^{n/2}\rho^{1/2}}\times \mathbb R}<\infty.
\end{align*}
In particular, it follows that $h=\rho \mathcal S_g^*(u,a)$ extends by $0$ as a $C^{k-2, \alpha}$ function across the boundary of $\Omega$. We emphasize that we lose two degrees of differentiability in the construction of $h$. Theorem \ref{locdefTake2} follows from this, Remark \ref{rem:nearbymetrics}, and inspection of how the weighted norms we have used are constructed and depend on the metric tensor.  Note that to arrange $\mbox{supp}(\gamma-g)$ to be compactly contained in $\Omega$, we would first replace $\Omega$ by $\Omega_{\delta}$, for $\delta>0$ so small that $\Omega_0\subset \Omega_\delta$ and so that $\Omega_\delta$ is not $V$-static (see the proof of Proposition \ref{prop:coest}).

We remark that if the metric $g$ is smooth to start with, then we can bootstrap to conclude that $h$, and hence $\gamma = g + h$, is also smooth.


\section{Constant scalar curvature gluing with a volume constraint}   \label{sec:gluing}

\begin{thm} \label{thm-imp-strenthed}
Let $k \geq 2$, $n \geq 3$, and $\sigma_n\in \{ -n(n-1), 0, n(n-1)\}$. Let $(\Sigma_1, \gamma_1), (\Sigma_2, \gamma_2)$ be two compact $n$-dimensional $C^{k, \alpha}$ Riemannian manifolds with non-empty boundary. Assume that $R(\gamma_1) = R(\gamma_2)=\sigma_n$.  When $\sigma_n>0$, we also assume that the operators $(\Delta_{\gamma_i}+n)$ have positive Dirichlet spectrum on $ \Sigma_i$. Let $p_i\in \rm{int}(\Sigma_i)$ and $U_i$ be a neighborhood of $p_i$ in $\Sigma_i$ for $i \in \{1, 2\}$. There is a family of $C^{k, \alpha}$ metrics $\{ \hat \gamma_T \} $ on the connected sum $\Sigma_1 \#\Sigma_2\supset (\Sigma_1\setminus U_1)\sqcup (\Sigma_2\setminus U_2)$ with $R(\hat \gamma_T)=\sigma_n$ and such that $\hat \gamma_T \to\gamma_1 \sqcup \gamma_2$ in $C^{k, \alpha}\Big((\Sigma_1\setminus U_1)\sqcup (\Sigma_2\setminus U_2)\Big)$ and  $\rm{vol}(\Sigma_1 \#\Sigma_2,\hat \gamma_T)\rightarrow \rm{vol}(\Sigma_1,\gamma_1)+\rm{vol}(\Sigma_2,\gamma_2)$ as $T \to \infty$. 
\label{imp-vol}
\end{thm}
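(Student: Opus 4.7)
The plan is to follow the conformal gluing strategy of \cite{imp, imp:flds, cpp}, supplemented by an argument that controls the total volume via the rapid pinching of the neck.

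First, I would construct a family of approximate solutions $\gamma_T$ on $\Sigma_1 \# \Sigma_2$.  Fix geodesic normal coordinates for $\gamma_i$ centered at $p_i$ on a ball whose closure lies in $U_i$, and pass to cylindrical variables $s = -\log r$ on the punctured ball.  After a pointwise conformal change, $\gamma_i$ takes the form $ds^2 + g_{\mathbb S^{n-1}} + O(e^{-2s})$ on $[s_0, \infty) \times \mathbb S^{n-1}$.  For each large $T$, I would truncate both half-cylinders at $s = T$, glue them in a transition region of length $O(1)$ by interpolating with the flat, round-spherical, or hyperbolic half-space model according to $\sigma_n = 0, n(n-1), -n(n-1)$, and then undo the pointwise conformal change.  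The resulting $C^{k, \alpha}$ metric $\gamma_T$ satisfies $R(\gamma_T) = \sigma_n$ outside the bounded transition region, with $\|R(\gamma_T) - \sigma_n\|_{e^{\delta|s-T|} C^{k-2,\alpha}} = O(e^{-\delta T})$ for a small fixed $\delta > 0$.

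Next, I would correct $\gamma_T$ to exact constant scalar curvature $\sigma_n$ by a conformal perturbation.  Writing $\hat\gamma_T = u_T^{4/(n-2)} \gamma_T$ with $u_T = 1 + v_T > 0$, the equation $R(\hat\gamma_T) = \sigma_n$ becomes
\[
  \mathcal L_T v_T = (\sigma_n - R(\gamma_T)) + Q_T(v_T), \qquad \mathcal L_T = -\tfrac{4(n-1)}{n-2}\Delta_{\gamma_T} + R(\gamma_T) - \tfrac{n+2}{n-2}\sigma_n,
\]
where $Q_T$ collects the terms quadratic and higher in $v_T$.  For $\sigma_n \le 0$, $\mathcal L_T$ has trivial kernel by a maximum-principle argument.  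For $\sigma_n = n(n-1)$, the hypothesis that $\Delta_{\gamma_i}+n$ has positive Dirichlet spectrum on each $\Sigma_i$ is precisely the condition that isolates $\mathcal L_T$ from zero, uniformly in $T$, as in \cite{imp:flds, cpp}.  Weighted Schauder estimates on the pinching neck together with a contraction-mapping argument then yield $v_T$ with $\|v_T\|_{C^{k,\alpha}} = O(e^{-\delta T})$, supported up to exponentially small tails in the neck.

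Finally, I would verify the convergence claims.  Away from $U_1 \sqcup U_2$, $\gamma_T$ equals $\gamma_1 \sqcup \gamma_2$ and the weighted bound on $v_T$ gives $\hat\gamma_T \to \gamma_1 \sqcup \gamma_2$ in $C^{k,\alpha}$ there.  For the volume, decompose $\Sigma_1 \# \Sigma_2 = (\Sigma_1 \setminus V_1^T) \cup N_T \cup (\Sigma_2 \setminus V_2^T)$, where $V_i^T$ is the ball around $p_i$ excised in the construction.  Since $\mathrm{vol}_{\gamma_i}(V_i^T) \to 0$ as its radius shrinks, and since $u_T^{2n/(n-2)} \to 1$ uniformly, the exterior pieces contribute $\mathrm{vol}(\Sigma_i, \gamma_i) + o(1)$; the neck $N_T$ has cross-sectional radius $O(e^{-T})$ in $\gamma_T$, so $\mathrm{vol}_{\gamma_T}(N_T) \to 0$, a bound preserved under the conformal correction.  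The main obstacle is the uniform invertibility of $\mathcal L_T$ in the positive case and the design of weighted function spaces on the degenerating neck in which both the linear solution operator and the nonlinear remainder $Q_T$ are controlled uniformly in $T$; the volume statement additionally requires that $v_T$ be small in $L^\infty$ (not only in energy) on the neck, which is why pointwise Schauder-type estimates are essential throughout.
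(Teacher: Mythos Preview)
Your overall strategy---conformal gluing following \cite{imp, imp:flds} with a contraction mapping to correct to exact constant scalar curvature, then a separate volume estimate---matches the paper. There are, however, two substantive differences worth noting, and one step that does not work as stated.

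The paper does \emph{not} linearize about $u=1$ on the degenerating connected-sum metric. Instead it keeps the conformally cylindrical background $\gamma_T$ (with uniformly bounded geometry as $T\to\infty$) and seeks the full conformal factor $\hat\Psi_T=\Psi_T+\eta_T$, where $\Psi_T=\chi_{1,T}\Psi_1+\chi_{2,T}\Psi_2$ is the \emph{superposition} of the two single-summand conformal factors; this produces a Schwarzschild neck in the exact-cylinder region, independently of $\sigma_n$, and there is no ``interpolation with flat/spherical/hyperbolic models depending on $\sigma_n$.'' The linearization is $\mathcal L_T=-\Delta_{\gamma_T}+c_nR(\gamma_T)-c_n\sigma_n\tfrac{n+2}{n-2}\Psi_T^{4/(n-2)}$, and the analysis runs in ordinary (unweighted) $C^{k,\alpha}$ norms on $(\Sigma_T,\gamma_T)$; the paper explicitly flags dispensing with weighted H\"older spaces as a simplification. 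Your choice to undo the conformal change first forces you onto a metric whose injectivity radius tends to zero, so that uniform Schauder constants and uniform invertibility of $\mathcal L_T$ genuinely require the weighted machinery you acknowledge but do not set up. Conformal covariance makes the two pictures equivalent in principle, but the cylindrical one is the cleaner route.

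Separately, your claim that for $\sigma_n\le 0$ the maximum principle yields $\ker\mathcal L_T=0$ fails when $\sigma_n=0$: in your formulation the zeroth-order coefficient is $R(\gamma_T)$, supported in the transition region and of no definite sign. In the paper's framework the case $\sigma_n=0$ (where in fact $\mathcal N_T=\mathcal L_T$, so no nonlinear iteration is needed) is handled by the same blow-up argument used for all signs: a putative nontrivial kernel element would limit, after recentering, either to a nontrivial bounded Dirichlet solution of $\Delta_{\gamma_i}+\tfrac{\sigma_n}{n-1}$ on $\Sigma_i$---excluded by the spectral hypothesis, which is automatic when $\sigma_n\le 0$---or to a bounded solution of $(-\Delta_{\mathring\gamma}+\tfrac{(n-2)^2}{4})v=0$ on the round cylinder, excluded by the maximum principle there.
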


\begin{remark} 
It is clear from the proof that Theorem \ref{thm-imp-strenthed} also holds in the case where $\partial \Sigma_i=\varnothing$ and $\sigma_n<0$ and in the case where $\sigma_n>0$ and $(\Delta_{\gamma_i}+n)$ has positive spectrum. 
\end{remark}

Theorem \ref{thm-imp-strenthed} augments the result of \cite[Theorem 1.2]{cip}, where an analogous gluing result is formulated for $\sigma_n\leq 0$, without a volume constraint.  We include here the case $\sigma_n>0$, and we also estimate the volume, which we need for our application to Theorem \ref{glue}.  The proof of Theorem \ref{thm-imp-strenthed} follows the approach of the proofs of \cite[Theorem 1]{imp} and \cite[Theorem 3.10]{imp:flds} closely. For completeness and clarity, we repeat many of the arguments rather than simply indicating necessary modifications.  Some of our estimates are slightly sharper than the analogues in \cite{imp, imp:flds}, and there is at least one technical simplification as we do not need to employ weighted H\"{o}lder spaces in our argument. 


\subsection{The approximate solution} \label{subsection:approximate_solution}  Here we construct approximate solutions to the scalar curvature equation on the connected sum.  To do this, we use a conformal rescaling in each of two punctured geodesic balls $B_i\setminus \{ p_i\}\subset U_i$ to produce a metric on each of $\Sigma_i\setminus \{ p_i\}$ with an asymptotically cylindrical end. We identify these ends (after a cut off to an exact cylindrical metric far along the end) to form the connected sum. We then superimpose the two conformal factors used to produce these cylindrical blow ups on the ends to obtain a new conformal factor. This gluing generalizes how the (scalar flat) Schwarzschild metric $(\mathbb{R}^n \setminus \{0\}, (1 + \frac{m}{2|x|^{n-2}})^{\frac{4}{n-2}} \sum_{j=1}^n dx_j^2)$ connects two copies of Euclidean space through a small neck of cross sectional area proportional to $m^{(n-1)/(n-2)}$. Our construction of approximate solutions here follows that of \cite[Section 2]{imp} very closely.     \\

\begin{lemma} [Quasi-polar and quasi-cylindrical coordinates] \label{lem-metric-decay} Let $n, k \geq 2$, let $(M,g)$ be an $n$-dimensional $C^{k, \alpha}$ Riemannian manifold, and let $p \in \mathrm{int} (M)$. There exists $r_0 > 0$ such that for every $\rho \in (0, r_0)$, there exist $C^{k+1, \alpha}$ coordinates $(x^1, \ldots, x^n)$ on an open subset containing $B(p, \rho / 2)$, with $(0, \ldots, 0)$ corresponding to $p$, and such that $g_{ij} = \delta_{ij} + Q_{ij}$, where $Q_{ij}\in C^{k, \alpha}(B(p,\rho/2))$, with $Q_{ij}(0)=0=Q_{ij,\ell}(0)$ for all $i, j, \ell \in \{1, \ldots, n\}$. Let $(r, \theta)$ be polar coordinates in this coordinate system. One can extend $r$ to all of $M\setminus \{p\}$ as a $C^{k, \alpha}$ function with uniform bounds on $r^{\ell-1} |\nabla^{\ell}_gr|_g$ for $\ell=1, \ldots k$, so $r(x)$ agrees with $\mathrm{dist}_{g} (p, x)$ on $B(p, \rho) \setminus B(p, \rho/2)$, such that $\frac{1}{2} r (x) \leq \mathrm{dist}_g(p, x) \leq 2 r(x)$ on all of $B(p, \rho)$, and so that $\frac{r(x)}{\mathrm{dist}_g(p, x)} \to 1$ as $\mathrm{dist}_g(p, x) \to 0$. Changing to cylindrical coordinates $t(x) = - \log r(x)$, we can express the metric $r^{-2} g$ on $B(p, \rho/2)\setminus\{p\}$ in the form $dt^2 + g_{\mathbb S^{n-1}} + e^{-2t}\; \hat h$ where $\hat h\in C^{k, \alpha}_{\mathrm{loc}}(B(p,\rho/2)\setminus\{p\})$. Moreover, $\|\hat h\|_{C^{k,\alpha}([-\log (\rho/2), \infty)\times \mathbb S^{n-1})}< \infty$, where the norm (including covariant derivatives) is taken with respect to the cylindrical metric $dt^2 + g_{\mathbb S^{n-1}}$. 
\end{lemma}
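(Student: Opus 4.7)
The proof is essentially a careful change of coordinates together with a bookkeeping of the cancellation between the conformal factor $r^{-2}$ and the fact that $Q$ vanishes to second order at $p$. First, I would work in Riemannian normal coordinates centered at $p$ (or, equivalently, in harmonic coordinates followed by a polynomial correction that kills $g_{ij}(0) - \delta_{ij}$ and $\partial_\ell g_{ij}(0)$). In such coordinates $g_{ij} = \delta_{ij} + Q_{ij}$ with $Q_{ij}(0) = \partial_\ell Q_{ij}(0) = 0$, and standard regularity theory yields a $C^{k+1,\alpha}$ chart in which the metric components are of class $C^{k,\alpha}$. Choosing $r_0 > 0$ so that this chart contains $B(p, r_0)$, one obtains the required coordinates on an open set containing $B(p, \rho/2)$ for any $\rho \in (0, r_0)$.

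Next, I would take $r(x) := |x|$ on $B(p,\rho)$, which in normal coordinates coincides with $\operatorname{dist}_g(p, \cdot)$ by the definition of the exponential map; thus $r$ agrees with the distance on $B(p, \rho) \setminus B(p, \rho/2)$, and the quasi-isometry bound $\tfrac12 r \le \operatorname{dist}_g(p, \cdot) \le 2r$ together with the ratio limit hold trivially (with ratio identically $1$) on $B(p, \rho)$. One then extends $r$ to all of $M \setminus \{p\}$ as a $C^{k,\alpha}$ function that is bounded below by a positive constant outside $B(p, \rho/2)$. For the scaling bound $r^{\ell-1} |\nabla_g^\ell r|_g \le C$ for $\ell = 1, \ldots, k$, the Euclidean derivatives of $|x|$ scale precisely as $r^{1-\ell}$; because $\partial g(0) = 0$ the Christoffel symbols are $O(r)$, so the difference between Euclidean and $g$-covariant derivatives produces only lower-order corrections. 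Outside $B(p, \rho/2)$ the bound is trivial since $r$ is bounded away from zero.

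The core computation is the substitution $t = -\log r$ and $x = e^{-t} \omega(\theta)$ with $\omega \in \mathbb{S}^{n-1}$. Under the map $\Phi(t, \theta) = e^{-t} \omega(\theta)$ one has $\Phi^* dx^i = e^{-t}(-\omega^i\, dt + \partial_{\theta^a} \omega^i\, d\theta^a)$, so the flat piece $r^{-2} \delta_{ij} dx^i dx^j$ pulls back exactly to $dt^2 + g_{\mathbb{S}^{n-1}}$, while the perturbation becomes
\[
\Phi^*\!\left(r^{-2} Q_{ij} dx^i dx^j\right) = Q_{ij}(e^{-t} \omega)\, M^{ij}(\theta, dt, d\theta),
\]
where $M^{ij}$ is a polynomial in $dt, d\theta$ whose coefficients are smooth functions of $\omega$. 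Taylor's theorem with integral remainder, applied to $Q$ and using $Q(0) = \partial Q(0) = 0$, gives $Q_{ij}(e^{-t}\omega) = e^{-2t} \tilde Q_{ij}(t, \theta)$ with $\tilde Q_{ij}$ bounded, and setting $\hat h := \tilde Q_{ij} M^{ij}$ yields the desired decomposition $r^{-2} g = dt^2 + g_{\mathbb{S}^{n-1}} + e^{-2t} \hat h$.

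The main obstacle is establishing the uniform $C^{k,\alpha}$ bound on $\hat h$ on the semi-infinite cylinder $[-\log(\rho/2), \infty) \times \mathbb{S}^{n-1}$. Any derivative of $e^{2t} Q_{ij}(e^{-t}\omega)$ of total order $m \le k$ in $(t, \theta)$ is a linear combination of terms of the form $e^{(2-j)t} (\partial_x^j Q)(e^{-t} \omega)$ with $0 \le j \le m$. For $j = 0$ and $j = 1$ the vanishing $Q(0) = \partial Q(0) = 0$ supplies decay of order $e^{-2t}$ and $e^{-t}$ respectively, which precisely cancels the exponential prefactor; for $j \ge 2$ the prefactor $e^{(2-j)t} \le 1$ absorbs the bounded higher derivatives of $Q$. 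The H\"older seminorm on the top derivative is handled identically using the $C^{k,\alpha}$ hypothesis on $g$ together with the smoothness of $\Phi$ away from $r = 0$ (the map $(t, \theta) \mapsto e^{-t}\omega(\theta)$ has Lipschitz constant $O(e^{-t})$, which only improves the estimates at large $t$). Once these uniform bounds are in place, the remaining claims of the lemma follow.
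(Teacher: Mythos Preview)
Your core computation --- the pullback via $\Phi(t,\theta)=e^{-t}\omega(\theta)$ and the bookkeeping that $Q(0)=\partial Q(0)=0$ yields the extra $e^{-2t}$ factor --- is exactly what the paper does, and your more detailed justification of the uniform $C^{k,\alpha}$ bound on $\hat h$ spells out what the paper leaves as ``follows readily.''

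There is, however, a regularity pitfall in your primary choice of chart. For a $C^{k,\alpha}$ metric the Christoffel symbols are only $C^{k-1,\alpha}$, so the exponential map and hence Riemannian normal coordinates are only $C^{k-1,\alpha}$; the metric components in such a chart would then be $C^{k-1,\alpha}$, not $C^{k,\alpha}$, and the chart would not be $C^{k+1,\alpha}$ as the lemma requires. The paper explicitly avoids this: it starts from an \emph{arbitrary} $C^{k+1,\alpha}$ (or even smooth) chart $\varphi$ and composes with a linear transformation plus a quadratic polynomial map to kill $g_{ij}(0)-\delta_{ij}$ and $\partial_\ell g_{ij}(0)$. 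This polynomial correction is smooth, so the full $C^{k,\alpha}$ regularity of the metric is retained. Your parenthetical alternative (harmonic coordinates followed by a polynomial correction) is in the same spirit and is fine; but it should be the main route, not an aside.

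Once you abandon normal coordinates, your second paragraph also needs a small adjustment: now $|x|$ no longer coincides with $\mathrm{dist}_g(p,\cdot)$, so to make $r$ agree with $\mathrm{dist}_g$ on the annulus $B(p,\rho)\setminus B(p,\rho/2)$ you must interpolate between the coordinate radius and the true distance function there. This is routine --- both are $C^{k,\alpha}$ in the punctured ball (for $\mathrm{dist}_g$ this uses Foote's regularity result, cited in the paper) and their ratio tends to $1$ at $p$ since $Q(0)=\partial Q(0)=0$ --- but it is not automatic as in your write-up.
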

  
 \begin{proof} We can compose any $C^{k+1, \alpha}$ diffeomorphism $\varphi$ of a neighborhood of $p$ in $M$ onto a neighborhood of the origin $\varphi(p)$ in $\R^n$ with a map that is a non-singular linear transformation plus a vector field whose entries are homogenous quadratic polynomials in the coordinates, to obtain a new coordinate system centered at $p$ in which $g_{ij}=\delta_{ij}+Q_{ij}$, with $Q_{ij}(0)=0=Q_{ij,\ell}(0)$ for all $i, j, \ell \in \{1, \ldots, n\}$. Let $\theta: \mathbb{S}^{n-1} \to \R^n$ be the standard embedding of the unit sphere, and let $ \Phi: \R \times \mathbb{S}^{n-1} \to \R^n \setminus \{(0, \ldots, 0)\}$ be the ``cylindrical coordinates map"  $\Phi(t, \theta)=e^{-t} \theta$.  Pulling back $g_{ij}$ by this map, we get
  \begin{align*}
 \Phi^*(g_{ij} dx^i \otimes dx^j)&=e^{-2t} \left(dt^2 + g_{\mathbb{S}^{n-1}} + Q_{ij} (e^{-t} \theta) (- \theta^i dt + d \theta^i) \otimes (- \theta^j d t + d \theta^j)\right)\\ &  =: e^{-2t} (dt^2 + g_{\mathbb{S}^{n-1}} + e^{-2t} \hat h). \end{align*} Using $Q_{ij}(0) = 0 = Q_{ij, \ell} (0)$, the assertions about the decay of $\hat h$ follow readily. 
 \end{proof}

\begin{remark}
In \cite[(9)]{imp}, the weaker decay rate $e^{-2t}(dt^2 + g_{\mathbb{S}^{n-1}} + e^{-t} \hat h)$ with $\hat h$ and its derivatives bounded as $t \to \infty$ is used. Our sharper estimate leads to better bounds in some places than those obtained in \cite{imp}.  
\end{remark}  

\begin{remark}
We do not work with cylindrical coordinates based on geodesic polar coordinates in Lemma \ref{lem-metric-decay} to avoid an unnecessary loss of regularity. Recall that the distance function of a $C^{k, \alpha}$ metric to a point $p$ \emph{is} $C^{k, \alpha}$ in a punctured neighborhood of $p$, see \cite{Foote}. Note that one could arrange the coordinates $(x^1, \ldots, x^n)$ to be smooth, say, by starting with a smooth diffeomorphism $\varphi$ in the above proof.
\end{remark}
 
We now fix $R \in (0, \frac{1}{3} \min\{1, \textrm{dist}_{\gamma_i}(p_i, \partial\Sigma_i), (r_{0})_i, i=1,2\})$. Here, $(r_{0})_i$ is as in Lemma \ref{lem-metric-decay} applied with $M = \Sigma_i$ and $p = p_i$. We let $r_{(i)}(x)$ be the functions constructed in Lemma \ref{lem-metric-decay} for $\rho= R$. 

We define $r_i(x)= \min\{ 2R, r_{(i)}(x)\}$. Let $\psi:(0,\infty)\rightarrow (0,\infty)$ be a smooth, positive function with 
$$\psi(t)= \begin{cases} t^{\frac{n-2}{2}} \qquad \qquad \qquad 0< t < R \\ \textrm{interpolation } \quad \quad R\leq t \leq 2R \\ 1 \qquad \qquad \qquad\quad\;\; t > 2R\; .\end{cases}$$ 
We let $\Psi_i(x)= \psi(r_i(x))$. On $B_{\gamma_i}(p_i, R)$, $(\Psi_i(x))^{\frac{4}{n-2}}= (r_i(x))^2$.  Let $\Sigma_i^*=\Sigma_i \setminus \{ p_i\}$. Then $(\Sigma_i^*, \Psi_i^{-\frac{4}{n-2}} \gamma_i)$ is $(\Sigma_i\setminus B_{\gamma_i}(p_i, 2R), \gamma_i)$ with an infinite, asymptotically cylindrical end attached.  

Let $T\geq T_0 \gg -2 \log R >1$, and let $(r_i, \theta)$ be the quasi-geodesic polar coordinates on $B_{\gamma_i}(p_i, 3 R) \subset \Sigma_i$ of Lemma \ref{lem-metric-decay}.  Let $s_i= -\log r_i + \log R -\frac{T}{2}$. Note that under this change of variables, $r_i=R$ corresponds to $s_i=-\frac{T}{2}$ and $r_i\searrow 0$ corresponds to $s_i \nearrow \infty$.  By Lemma  \ref{lem-metric-decay}, the metric $\Psi_i^{-4/(n-2)} \gamma_i$ on $B_{\gamma_i}(p_i, R)$ can be written as $ds_i^2 + g_{\mathbb S^{n-1}}+ e^{-T} e^{-2s_i} R^2 \hat h_i$ where $\hat h_i$ and its covariant derivatives with respect to the cylindrical metric $ds_i^2 + g_{\mathbb{S}^{n-1}}$ are bounded,  independently of $T$.  Let $\gamma_{i, T}$ be the metric obtained by transitioning smoothly in the $(s_i, \theta)$-region $(-1, -\frac{1}{2})\times \mathbb S^{n-1}$ from the metric $\Psi_i^{-4/(n-2)} \gamma_i=ds_i^2 + g_{\mathbb S^{n-1}}+ e^{-T} e^{-2s_i} R^2 \hat h_i$ on $(-\frac{T}{2}, -1)\times \mathbb S^{n-1}$ to the exact cylindrical metric $ds_i^2 +g_{\mathbb S^{n-1}}$ on $(-\frac{1}{2}, \infty) \times \mathbb S^{n-1}$. Such a transition can be accomplished using a cut-off function whose norms do not depend on $T$.   

The cylindrical ends of the two Riemannian manifolds so obtained can be identified by forming the quotient via the relation $(s_1, \theta)\sim (-s_2, \theta)$ in the exactly cylindrical pieces $(-\frac{1}{2}, \frac{1}{2}) \times \mathbb S^{n-1}$ in each of $(\Sigma_i^*, \gamma_{i,T})$. We obtain a new manifold $(\Sigma_T, \gamma_T)$, where $\Sigma_T$ is topologically just $\Sigma_1 \#\Sigma_2$.  We define a new linear coordinate  $s$ on $[-\frac{T}{2}, \frac{T}{2}]\times \mathbb S^{n-1}\cong C_T\subset \Sigma_T$, so that $s=s_1$ for $s\leq 0$ and $s=-s_2$ for $s\geq 0$.  On $C_T$, the metric $ \gamma_T $ takes the form
\begin{equation} \label{metric-est} 
\gamma_T = ds^2+ g_{\mathbb S^{n-1}}+ e^{-T} \cosh (2s) \hat h_T, 
\end{equation} 
where $\hat h_T$ and its derivatives with respect to the cylindrical metric are bounded independently of $T$, and where $\hat h_T = 0 $ on $[ - \frac12, \frac12] \times \mathbb{S}^{n-1}$.

Let $\chi_{1,T}(s)$ be a cut-off function transitioning smoothly from 1 near $\{\frac{T}{2}-1\}\times \mathbb S^{n-1}$ to 0 near $\{ \frac{T}{2}\} \times \mathbb S^{n-1}$, and let $\chi_{2,T}(s)$ be the corresponding cut-off function transitioning smoothly from 1 near $\{-\frac{T}{2}+1\}\times \mathbb S^{n-1}$ to 0 near $\{ -\frac{T}{2}\} \times \mathbb S^{n-1}$. The function $\Psi_T$ defined by $\Psi_T(s, \theta):=\chi_{1,T}(s)\psi (R e^{-s- \frac{T}{2}})+ \chi_{2,T}(s) \psi( R e^{s - \frac{T}{2}})$ extends smoothly to $\Sigma_T$.    

Note that on $(-\frac{T}{2}+1, \frac{T}{2}-1)\times \mathbb S^{n-1}\subset C_T$, we have 
\begin{align*}
\Psi_T(s,\theta)=  (Re^{-s-\frac{T}{2}})^{\frac{n-2}{2}}+  (Re^{s-\frac{T}{2}})^{\frac{n-2}{2}} = 2 R^{\frac{n-2}{2}} e^{-\frac{(n-2)T}{4}} \cosh \big( \tfrac{(n-2)s}{2}\big).
\end{align*}
The scalar curvature of $\Psi_T^{{4/(n-2)}}\gamma_T$ is given by 
\begin{align}
R(\Psi_T^{\frac{4}{n-2}} \gamma_T)=c_n^{-1} \Psi_T^{-\frac{n+2}{n-2}} \left( - \Delta_{\gamma_T} \Psi_T + c_n R(\gamma_T) \Psi _T\right). \label{conscal} 
\end{align}
Here, $c_n = \frac{n-2}{4(n-1)}$. Because $\gamma_T$ is exactly cylindrical  on $(-\frac{1}{2} , \frac{1}{2}) \times \mathbb S^{n-1}$, we have that $R(\Psi_T^{{4/(n-2)}} \gamma_T)=0$ there.  In fact, on this region, $\Psi_T^{{4/(n-2)}}\gamma_T$ is precisely the \emph{Schwarzschild metric} of mass $m= 2 R^{(n-2)}e^{-(n-2)T/2}$, with the minimal sphere at $s=0$ in our coordinates (cf. \cite{bray-lee}).  The geometry of $(\Sigma_T, \Psi_T^{{4/(n-2)}} \gamma_T)$ is thus that of a Schwarzschild neck together with interpolating regions joining $(\Sigma_1\setminus B_{\gamma_1}(p_1, 2R), \gamma_1)$ and $(\Sigma_2\setminus B_{\gamma_2}(p_2, 2R), \gamma_2)$.

We use $\{ (\Sigma_T,   \Psi_T^{{4/(n-2)}} \gamma_T)\}_{T \geq T_0}$ as a family of approximate solutions to the constant scalar curvature equation on $\Sigma_1 \#\Sigma_2$ that we want to perturb to obtain exact solutions. The injectivity radius of $(\Sigma_T, \gamma_T)$ is bounded below uniformly as $T \to \infty$. Define the operator 
$$\mathcal N_T(f):= -\Delta_{\gamma_T} f + c_n R(\gamma_T) f - c_n \sigma_n f^{\frac{n+2}{n-2}}.$$
In view of (\ref{conscal}), we would like to solve $\mathcal N_T(\Psi_T + \eta_T )=0$ for small $\eta_T $, such that $ \Psi_T + \eta_T > 0$. To accomplish this by perturbation, we will estimate $\mathcal N_T(\Psi_T)$ and analyze the mapping properties of the linearization $\mathcal L_T$ of $\mathcal N_T$ at $\Psi_T$.

Let $\|f\|_{k,\alpha}:=\|f\|_{C^{k, \alpha}(\Sigma_T)}$ denote the H\"{o}lder norm on $(\Sigma_T, \gamma_T)$.  Let ``$\lesssim$" denote an inequality up to multiplication by a constant that is independent of $T$. We begin by estimating $\mathcal N_T(\Psi_T)$: 

\begin{prop}  [\protect{Cf. \cite[Proposition 6]{imp}}]  \label{prop-NT}
Let $k\geq 2$. If $3 \le n \le 6$, we have that \begin{equation} \label{approxsol1} \|\mathcal N_T(\Psi_T)\|_{k-2, \alpha} \lesssim e^{-\frac{(n-2)T}{2}}.\end{equation} 
  For $n > 6$, 
  \begin{equation} \label{approxsol2} \|\mathcal N_T(\Psi_T)\|_{k-2,\alpha}\lesssim  e^{-\frac{(n+2)T}{4} }. \end{equation}
\end{prop}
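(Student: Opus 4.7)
The plan is to exploit the conformal identity
\[\mathcal{N}_T(\Psi_T) \;=\; c_n\,\Psi_T^{(n+2)/(n-2)}\bigl(R(\tilde\gamma_T) - \sigma_n\bigr),\qquad \tilde\gamma_T := \Psi_T^{4/(n-2)}\gamma_T,\]
which reduces the problem to estimating $R(\tilde\gamma_T) - \sigma_n$ pointwise and weighting by $\Psi_T^{(n+2)/(n-2)}$. The first observation is that outside the neck $C_T$, i.e. on $\Sigma_i \setminus B_{\gamma_i}(p_i, R)$, one has $\Psi_T = \Psi_i$ and $\gamma_T = \Psi_i^{-4/(n-2)}\gamma_i$ by construction, so $\tilde\gamma_T = \gamma_i$ and therefore $\mathcal{N}_T(\Psi_T) \equiv 0$ there. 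All subsequent work takes place on $C_T$.

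I would decompose $C_T$ into: (i) the central exact cylinder $|s|\le 1/2$, where $\gamma_T = g_0 := ds^2 + g_{\mathbb{S}^{n-1}}$ and both cutoffs equal $1$; (ii) the compact metric transition collar $1/2 \le |s| \le 1$, where $\gamma_T - g_0 = O(e^{-T})$ with bounded derivatives; and (iii) the two long pieces $1 \le |s| \le T/2$, where $\gamma_T$ coincides with the cylindrical blow-up $\Psi_i^{-4/(n-2)}\gamma_i$. On (i), each summand of $\Psi_T$ is a multiple of $e^{\pm (n-2)s/2}$, which lies in the kernel of $-\partial_s^2 + c_n(n-1)(n-2)\,\cdot$, so $-\Delta_{g_0}\Psi_T + c_n R(g_0)\Psi_T = 0$; hence $\mathcal{N}_T(\Psi_T)$ reduces to the nonlinear term $-c_n\sigma_n\Psi_T^{(n+2)/(n-2)}$, and since $\Psi_T \lesssim e^{-(n-2)T/4}$ there, this is of size $e^{-(n+2)T/4}$. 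The same bound controls region (ii), where the metric perturbation error is $\lesssim e^{-T}\Psi_T \lesssim e^{-(n+2)T/4}$ and the nonlinear term has the same size.

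The core estimate is on the long piece, say $-T/2 \le s \le -1$. There $\chi_{1,T}\equiv 1$ and $\Psi_T = \Psi_1 + \chi_{2,T}\psi(r_2)$, whence
\[\tilde\gamma_T \;=\; v^{4/(n-2)}\gamma_1,\qquad v \;=\; 1 + \chi_{2,T}\,(r_2/r_1)^{(n-2)/2} \;=\; 1 + \chi_{2,T}\,M\,r_1^{-(n-2)},\]
with $M := R^{n-2}e^{-(n-2)T/2}$. The conformal transformation formula gives
\[R(\tilde\gamma_T) - \sigma_n \;=\; -c_n^{-1}\,v^{-(n+2)/(n-2)}\Delta_{\gamma_1}v \;+\; \sigma_n\bigl(v^{-4/(n-2)}-1\bigr).\]
The decisive analytic input is that in the quasi-polar coordinates supplied by Lemma \ref{lem-metric-decay}, $(\gamma_1)_{ij} = \delta_{ij} + O(r_1^2)$; combined with $\Delta_\delta(r_1^{-(n-2)}) = 0$, this forces $\Delta_{\gamma_1}(r_1^{-(n-2)}) = O(r_1^{-(n-2)})$. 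The cutoff contributions to $\Delta_{\gamma_1}v$ are supported where $r_1 \sim R$ and where $\psi(r_2) = O(e^{-(n-2)T/2})$, hence are absorbed. Since $v \ge 1$ and $w := v - 1 \le e^{(n-2)s} \le 1$ on this side, one deduces
\[|R(\tilde\gamma_T) - \sigma_n| \;\lesssim\; M\,r_1^{-(n-2)} \;=\; e^{(n-2)s}.\]
Multiplying by $\Psi_T^{(n+2)/(n-2)} \lesssim r_1^{(n+2)/2} \sim R^{(n+2)/2}\,e^{-(n+2)(s+T/2)/2}$ yields
\[|\mathcal{N}_T(\Psi_T)|(s,\theta) \;\lesssim\; e^{-(n+2)T/4}\,e^{(n-6)s/2},\qquad -T/2 \le s \le 0.\]
The supremum over this range is attained at $s = -T/2$ with value $e^{-(n-2)T/2}$ when $3 \le n \le 6$, and at $s = 0$ with value $e^{-(n+2)T/4}$ when $n > 6$; the $\Sigma_2$ side is symmetric.

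Promoting this to the $C^{k-2,\alpha}$ norm with respect to $\gamma_T$ is essentially automatic given the scale-invariant structure of the neck: the cutoffs, the perturbation $\hat h_T$, and $\gamma_T - g_0$ have uniformly bounded derivatives in the cylindrical coordinates; the eigenfunction structure of $\Psi_T$ gives $|\nabla^j_{g_0}\Psi_T|_{g_0} \lesssim \Psi_T$; and derivatives of $\Delta_{\gamma_1}(r_1^{-(n-2)})$ satisfy the same $r_1^{-(n-2)}$-type bound by the scaling in the quasi-polar expansion. Differentiating the pointwise identity then reproduces exponential estimates of the same size, so the Hölder norm is dominated by the $C^0$ bound derived above. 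The main technical obstacle is retaining the sharp balance $e^{-(n+2)T/4}e^{(n-6)s/2}$ between $|R(\tilde\gamma_T) - \sigma_n|$ and $\Psi_T^{(n+2)/(n-2)}$: discarding the $s$-dependence of either factor forfeits the dimensional threshold at $n = 6$ and yields a strictly weaker estimate that does not suffice for the subsequent perturbation step.
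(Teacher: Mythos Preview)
Your proof is correct and complete. The key pointwise bound
\[
|\mathcal N_T(\Psi_T)|(s,\theta)\;\lesssim\;e^{-(n+2)T/4}\,e^{(n-6)s/2}
\]
on the long piece is exactly what the paper obtains (with the opposite sign convention on $s$), and the dimensional dichotomy at $n=6$ follows from it in the same way.

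The organization, however, differs from the paper's. On the long piece near $\Sigma_2$ the paper subtracts off the identity $\mathcal N_T(\Psi_2)=0$, leaving a linear term $(-\Delta_{\gamma_T}+c_nR(\gamma_T))(\chi_{1,T}\Psi_1)$ and a nonlinear remainder $\Psi_T^{(n+2)/(n-2)}-\Psi_2^{(n+2)/(n-2)}$, and then compares $\gamma_T$ to the exact cylinder via the estimate $\gamma_T-\mathring\gamma=O(e^{-T}\cosh 2s)$. You instead keep $\Psi_T$ intact, pass through the conformal identity $\mathcal N_T(\Psi_T)=c_n\Psi_T^{(n+2)/(n-2)}(R(\tilde\gamma_T)-\sigma_n)$, and view $\tilde\gamma_T$ as a conformal perturbation $v^{4/(n-2)}\gamma_1$ of the original metric; your analytic input is then $\Delta_{\gamma_1}(r_1^{-(n-2)})=O(r_1^{-(n-2)})$, which is the quasi-polar expansion of Lemma~\ref{lem-metric-decay} read in a different way. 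The two arguments are equivalent in strength---both exploit the $O(r^2)$ vanishing of the metric perturbation at $p_i$---but yours is perhaps more geometric (it computes the scalar curvature of the approximate metric directly), while the paper's is more modular (the linear and nonlinear contributions are separated and can be tracked independently, which is convenient if one later wants to refine either piece).
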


This estimate follows along the lines of  \cite[Proposition 6]{imp},  \cite[Proposition 3.6]{imp:flds}. For the reader's convenience, we include a proof in Appendix \ref{sec:NT}.  The cited proofs involve some additional terms coming from the Einstein constraint equations.  Our estimates are also slightly sharper due in part to the fact that we use a better estimate on the metric (Lemma \ref{lem-metric-decay}).  


\subsection{The linearized operator} 
The linearization $\mathcal L_T$ of $\mathcal N_T$ at $\Psi_T$ is given by  
$$\mathcal L_T(f):= D\mathcal N_T\big|_{\Psi_T} (f)= -\Delta_{\gamma_T} f + c_n R(\gamma_T) f - c_n \sigma_n \frac{n+2}{n-2} \Psi_T^{{4/(n-2)}} f .$$
For a given integer $\ell$ with $0 \leq \ell \leq k$, we define the function space $\mathring C^{\ell,\alpha} (\Sigma_T)=\{ u  \in C^{\ell, \alpha} (\Sigma_T) 
\ \mathrm{and} \  u\big|_{\partial \Sigma_T}=0\}$.  
The proof of the following proposition is very similar to \cite{imp, imp:flds}. We include the proof for completeness and clarity. We let $\Sigma_{i,r}^*= \Sigma_i \setminus B_{\gamma_i } (p_i, r)$.  

\begin{prop} [\protect{Cf. \cite[Proposition 8]{imp}}]  \label{prop-LT}
Let $k \geq 2$. For all sufficiently large $T$, the operators $\mathcal L_T:\mathring C^{k, \alpha} (\Sigma_T)\rightarrow C^{k-2,\alpha} (\Sigma_T)$ are invertible. The norms of the inverse operators $\mathcal G_T: C^{k-2,\alpha} (\Sigma_T)\rightarrow \mathring C^{k,\alpha} (\Sigma_T)$ are uniformly bounded. \end{prop}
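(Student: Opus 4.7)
The plan is to combine Fredholm theory with a blow-up argument. Because $\mathcal L_T$ is a uniformly elliptic, formally self-adjoint, linear second-order operator on the compact manifold-with-boundary $\Sigma_T$, the map $\mathcal L_T : \mathring C^{k,\alpha}(\Sigma_T) \to C^{k-2,\alpha}(\Sigma_T)$ is Fredholm of index zero, so invertibility reduces to injectivity. Interior and boundary Schauder estimates applied on a cover of $\Sigma_T$ by balls whose radius is bounded below independently of $T$ (as allowed by the $T$-independent lower bound on the injectivity radius of $\gamma_T$) promote any uniform $L^\infty$-bound on the putative inverse $\mathcal G_T$ to the full $C^{k,\alpha}$-bound. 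It therefore suffices to establish, for all $T$ large and every $f \in \mathring C^{k,\alpha}(\Sigma_T)$, an a priori estimate $\|f\|_{L^\infty(\Sigma_T)} \leq C \|\mathcal L_T f\|_{C^{0}(\Sigma_T)}$ with $C$ independent of $T$.

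I would prove this estimate by contradiction. If it failed there would exist $T_j \to \infty$ and $f_j \in \mathring C^{k,\alpha}(\Sigma_{T_j})$ with $\|f_j\|_{L^\infty} = 1$ and $\|\mathcal L_{T_j} f_j\|_{C^0} \to 0$; pick $q_j \in \Sigma_{T_j}$ with $|f_j(q_j)| = 1$. After extracting a subsequence, one of three geometric regimes occurs: a \emph{far regime} where $q_j$ stays at bounded distance from some compact subset of $\Sigma_i \setminus B_{\gamma_i}(p_i, 2R)$, a \emph{neck regime} where the $s$-coordinate $s_j$ of $q_j$ on $C_{T_j}$ satisfies $\min(|s_j + T_j/2|, |s_j - T_j/2|) \to \infty$, and a \emph{transition regime} where $|s_j \mp T_j/2|$ stays bounded. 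In the far regime $\gamma_{T_j} = \gamma_i$ and $\Psi_{T_j} = 1$, so Schauder compactness produces a bounded $C^{k-1,\alpha}_{\mathrm{loc}}$-limit $f_\infty$ on $\Sigma_i^* := \Sigma_i \setminus \{p_i\}$ solving $\mathcal L_{\gamma_i} f_\infty = 0$ with zero Dirichlet data; removability of bounded solutions at the isolated point $p_i$ (valid for $n \geq 3$) places the extension into the Dirichlet kernel of $\mathcal L_{\gamma_i} = -\Delta_{\gamma_i} - \sigma_n/(n-1)$. That kernel is trivial either by the maximum principle (when $\sigma_n \leq 0$) or by the hypothesis that $\Delta_{\gamma_i} + n$ has positive Dirichlet spectrum (when $\sigma_n = n(n-1)$), a contradiction.

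In the neck regime, translation by $-s_j$ together with (\ref{metric-est}) yields $C^{k,\alpha}_{\mathrm{loc}}$-convergence of $\gamma_{T_j}$ to the exact round cylinder on $\mathbb R \times \mathbb S^{n-1}$, and $\Psi_{T_j}^{4/(n-2)} \to 0$ locally uniformly, so the limit satisfies $-\Delta_{\mathrm{cyl}} f_\infty + \tfrac{(n-2)^2}{4} f_\infty = 0$. Spherical-harmonic expansion in the $\mathbb S^{n-1}$-factor (with eigenvalues $k(k+n-2)$) reduces this equation mode by mode to $a_k''(s) = (k(k+n-2) + (n-2)^2/4) a_k(s)$, whose characteristic exponents $\pm\sqrt{k(k+n-2) + (n-2)^2/4}$ are real and non-zero for every $k \geq 0$; thus every bounded solution is identically zero, contradicting $|f_\infty(0, \theta_\infty)| = 1$. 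In the transition regime, translation by $\pm T_j/2$ gives a bounded limit $f_\infty$ on the complete asymptotically cylindrical manifold $(\Sigma_i^*, \Psi_i^{-4/(n-2)} \gamma_i)$; conformal covariance of the Yamabe operator converts the limit equation to $\mathcal L_{\gamma_i}(\Psi_i^{-1} f_\infty) = 0$ on $\Sigma_i^*$, and asymptotic analysis on the cylindrical end (as in the neck regime, now on a half-cylinder) forces $f_\infty = O(e^{-(n-2) t/2})$ there, so $\Psi_i^{-1} f_\infty$ is bounded near $p_i$ and the argument concludes as in the far regime.

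I expect the neck analysis to be the main technical obstacle: one must reason uniformly in $T$ against a geometry that is degenerating to a Schwarzschild-like neck, confirm that the limit operator on the infinite round cylinder has only trivial bounded solutions, and correctly interface the three blow-up scales — a step that relies on the sharp decay furnished by Lemma \ref{lem-metric-decay} and equation (\ref{metric-est}). Once the three regimes are ruled out, the desired a priori estimate follows, yielding both injectivity of $\mathcal L_T$ and the uniform bound on $\mathcal G_T$ required by Proposition \ref{prop-LT}.
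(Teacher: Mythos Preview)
Your overall architecture---Fredholm index zero, a blow-up contradiction at the $L^\infty$ level, then a Schauder upgrade---matches the paper's, and your neck regime is fine (the paper appeals directly to the maximum principle for $-\Delta_{\mathring\gamma}+\tfrac{(n-2)^2}{4}$ rather than separating variables, but these are equivalent). The far-regime analysis, however, contains a genuine error.

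You assert that in the far regime $\gamma_{T_j}=\gamma_i$ and $\Psi_{T_j}=1$, so the limit $f_\infty$ solves $\mathcal L_{\gamma_i} f_\infty=0$ on $\Sigma_i^*=\Sigma_i\setminus\{p_i\}$ and bounded-removable-singularity applies at $p_i$. But the equalities $\gamma_{T_j}=\gamma_i$ and $\Psi_{T_j}=1$ hold only on $\Sigma_i\setminus B_{\gamma_i}(p_i,2R)$. On $B_{\gamma_i}(p_i,2R)\setminus\{p_i\}$ the metrics $\gamma_{T_j}$ converge to the asymptotically cylindrical metric $\tilde\gamma_i=\Psi_i^{-4/(n-2)}\gamma_i$ and $\Psi_{T_j}\to\Psi_i\neq 1$, so the limit equation there is $\mathcal L_i f_\infty=0$ with respect to $\tilde\gamma_i$. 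Conformal covariance gives $(\Delta_{\gamma_i}+\sigma_n/(n-1))(\Psi_i^{-1}f_\infty)=0$ on all of $\Sigma_i^*$, but the function you must remove the singularity from is $\Psi_i^{-1}f_\infty$, and since $\Psi_i\sim r_i^{(n-2)/2}$ near $p_i$ while $|f_\infty|\le 1$, you only have $|\Psi_i^{-1}f_\infty|\lesssim r_i^{-(n-2)/2}$, which blows up. The bounded-removability lemma does not apply.

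The paper's remedy is a Green's-function comparison: the positive Dirichlet Green's function of $\Delta_{\gamma_i}+\sigma_n/(n-1)$ with pole at $p_i$ (which exists by the positive-spectrum hypothesis) grows like $r_i^{-(n-2)}$, strictly faster than $r_i^{-(n-2)/2}$, so by the maximum principle $|\Psi_i^{-1}f_\infty|$ lies below \emph{every} positive multiple of this Green's function and hence vanishes. Note also that your far and transition regimes both limit to the same space $(\Sigma_i^*,\tilde\gamma_i)$; the paper handles them as a single case. Your transition-regime decay claim $f_\infty=O(e^{-(n-2)t/2})$ on the cylindrical end would, if established, repair the argument as well, but it requires nontrivial indicial-root analysis on the perturbed half-cylinder that you have not carried out; the Green's-function route is shorter and avoids this.
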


\begin{proof} 
We first show that  $\mathcal L_T: \mathring C^{2,\alpha}(\Sigma_T) \rightarrow C^{0, \alpha}(\Sigma_T)$ is invertible for large $T$. The invertibility of these operators for $k\geq3$ follows from elliptic regularity. This map is Fredholm of index zero, so we only have to show that it is injective for $T$ large enough.
We do this by contradiction below.  First, note that $s=(-1)^j$ corresponds to $r_j= Re^{1- \frac{T}{2}}=:r(T)$, and that $(\Sigma^*_{j,r(T)}, \Psi_j^{-{4/(n-2)}}\gamma_j)\subset (\Sigma_T, \gamma_T)$.  Thus, as $T$ grows, more and more of $(\Sigma_j^*,  \Psi_j^{-{4/(n-2)}}\gamma_j)$ is contained in $(\Sigma_T, \gamma_T)$.
 
Suppose there is a sequence $T_m\nearrow \infty$ and non-zero $\eta_m \in \mathring C^{2, \alpha} (\Sigma_{T_m})$ so that $\mathcal L_{T_m}(\eta_m)=0$.  By normalization, we may arrange $\max\limits_{\Sigma_{T_m}} |\eta_m| = 1$. We distinguish two cases. 

In the first case, we assume that for one of $j=1, 2$, and for some $0<r<R$, there is a $c>0$ so that $\max\limits_{\Sigma_{j,r}^*} |\eta_m|\geq c$ (at least for a subsequence, which we re-index).  Let $\tilde{\gamma}_j:=\Psi_j^{-{4/(n-2)}} \gamma_j$.  The operators $\mathcal L_{T_m}$ converge locally on $ \Sigma_j^*$ to the operator $\mathcal L_j = -\Delta_{\tilde \gamma_j} + c_n R(\tilde \gamma_j)- c_n \sigma_n \frac{n+2}{n-2} \Psi_j^{{4/(n-2)}}$.  
Since the $\eta_m$ are uniformly bounded, interior Schauder estimates on the equations $\mathcal L_{T_m} (\eta_m)=0$ imply that we can take a subsequence converging in $C^2$ on compact subsets of $\Sigma_j^*$, to a non-trivial limit function $\eta$ on $\Sigma_j^*$.  Moreover, we have $\mathcal L_j(\eta)=0$ on $\Sigma_j^*$.  Applying the identity 
$$-\Delta_{\tilde\gamma_j} f + c_n R(\tilde \gamma_j) f = \Psi_j^{\frac{n+2}{n-2}}(-\Delta_{\gamma_j} (\Psi_j^{-1}f) + c_n R(\gamma_j) (\Psi_j^{-1}f))$$
for the conformal Laplacian (valid for every $f \in C^2(\Sigma_j)$) with $f = \eta$, we obtain that 
\begin{align*}
c_n \sigma_n \frac{n+2}{n-2} \Psi_j^{\frac{4}{n-2} } \eta = \Psi_j^{\frac{n+2}{n-2}}(-\Delta_{\gamma_j} (\Psi_j^{-1}\eta) + c_n R(\gamma_j) (\Psi_j^{-1}\eta)).
\end{align*}
Since $R(\gamma_j)=\sigma_n$, we conclude that
$$0=\Delta_{\gamma_j} (\Psi_j^{-1}\eta) + \frac{\sigma_n}{n-1} (\Psi_j^{-1}\eta).$$ 

Note that $0 \not \equiv \Psi_j^{-1} \eta \in C^{2, \alpha} (\Sigma_j^*)$ vanishes on $\partial \Sigma_j$. We have that $|\Psi_j^{-1}\eta|\lesssim r_j^{-\frac{n-2}{2}}$ near $p_j$ since $|\eta| \leq 1$. Our assumption that the operator  $\Delta_{\gamma_j} + \frac{\sigma_n}{n-1}$ has positive Dirichlet spectrum on $\Sigma_j$ implies the existence of a positive (Dirichlet) Green's function with pole at $p_j$, where it grows like $r_j^{- (n-2)}$. A standard application of the maximum principle shows that $|\Psi_j^{-1} \eta|$ lies below \emph{any} positive multiple of this Green's function. Hence $\Psi_j^{-1} \eta \equiv 0$, a contradiction. 

If we are not in the first case, then $\eta_m \to 0$ locally uniformly on $\Sigma_j^*$. If $q_m$ is such that $|\eta_m(q_m)|= 1$, then $q_m=(s(q_m), \theta(q_m)) \in C_{T_m} \cong [- \frac{T_m}{2}, \frac{T_m}{2}] \times \mathbb{S}^{n-1}$ and $\min \{ \frac{T_m}{2} - s(q_m), s(q_m) + \frac{T_m}{2}\} \to + \infty$.  Introducing a new linear variable $\tilde s = s - s (q_m)$ (where we are now identifying the cylindrical pieces $C_{T_m} \subset \Sigma_{T_m}$ by identifying the $(s, \theta)$ coordinate patches), we conclude that $\gamma_{T_m} \to \mathring \gamma = d \tilde s^2 + g_{\mathbb{S}^{n-1}}$ and $\Psi_{T_m}\rightarrow 0$ locally smoothly on $\mathbb{R} \times \mathbb{S}^{n-1}$.  It follows that $\mathcal{L}_{T_m} \to \mathring {\mathcal L} := - \Delta_{\mathring \gamma} + c_n R(\mathring{\gamma}) =  - \Delta_{\mathring \gamma} + \frac{(n-2)^2}{4}$ locally smoothly on $\mathbb{R} \times \mathbb{S}^{n-1}$. Using interior Schauder estimates and the supremum bound on $\eta_m$, we get a subsequence converging in $C^2$ on compact subsets of the cylinder to a solution $\eta$ of $\mathring{\mathcal L} (\eta)=0$. Moreover, $|\eta| \le 1$ and $| \eta | = 1$ at some point with $\tilde s =0$. This contradicts the maximum principle. 

Finally, we show that  the norm of the inverse $\mathcal G_T: C^{k-2, \alpha} (\Sigma_{T}) \rightarrow \mathring C^{k, \alpha} (\Sigma_{T}) $ can be bounded independently of $T$ large. The proof proceeds by contradiction, as above. Suppose there are $T_m \nearrow \infty$ and $\eta_m\in C^{k-2, \alpha} (\Sigma_{T_m})$ so that $\|\eta_m\|_{k-2, \alpha}\rightarrow 0$ while $\|\mathcal G_{T_m}(\eta_m)\|_{k, \alpha}=1$.  Let $v_m=\mathcal G_{T_m}(\eta_m)$, so that $\| \mathcal L_{T_m}(v_m)\|_{k-2,\alpha}=\|\eta_m\|_{k-2,\alpha} \rightarrow 0$.  Since $\| v_m \|_{k, \alpha} = 1 $, we see that $v_m$ converges in $ C^{k}$ on compact subsets of $\Sigma^*:=\Sigma_1^*\cup \Sigma_2^*$.  
Just as above, there are two possible cases.  In the first case, for either $j=1$ or $j=2$, some subsequence of the $v_m$ converges in the $ C^{k}$ on compact subsets of $\Sigma_j^*$ to a non-trivial solution $v$ of the equation $\mathcal L_j(v)=0$ with $ v |_{\p \Sigma_j} = 0$, in which case 
$\Psi_j^{-1}v$ extends to a non-trivial element in the kernel of $(\Delta_{\gamma_j} + \frac{\sigma_n}{n-1})$ on $\Sigma_j$. This is a contradiction.  

In the second case, $v_m$ converges to zero in $C^{k}$ on any compact subset of $\Sigma_1^*\cup \Sigma_2^*$.  The operators $ \mathcal L_T $ are uniformly elliptic. Since $\|  \mathcal L_{T_m} (v_m)\|_{k-2,\alpha}\rightarrow 0$ and $\| v_m\|_{k, \alpha}=1$, interior Schauder estimates imply that $\| v_m \|_{0}$ cannot tend to zero.  Thus there is a $c>0$ so that $$c\leq \max\limits_{C_{T_m}} | v_m | \leq 1.$$  The same rescaling to a cylinder as above leads to a contradiction. 
\end{proof}

We have now established the linear theory.  Before moving on to the non-linear estimates, we note that in the zero scalar curvature case $ \sigma_n = 0 $, the problem we want to solve is linear: $\mathcal N_T(f)= -\Delta_{\gamma_T} (f) + c_n R(\gamma_T) f  = \mathcal L_T (f)$. In this case,  Proposition \ref{prop-LT} is enough to obtain a solution $ \eta_T \in \mathring{C}^{k, \alpha}(\Sigma_T) $ of $ \mathcal N_T (\Psi_T + \eta_T ) = 0 $,   
for sufficiently large $ T$.
Such an $ \eta_T$ is  given by
$  \eta_T = - \mathcal G_T ( \mathcal N_T (\Psi_T )) $
and satisfies $ \| \eta_T \|_{k, \alpha} \lesssim \| \mathcal{N}_T (\Psi_T ) \|_{k-2, \alpha} $. 
By Proposition  \ref{prop-NT}, we have that
\begin{align}
\sup_{\Sigma_T} \Big|   \frac{\eta_T}{ \Psi_T}    \Big|
\lesssim \begin{cases} 
e^{-\frac{(n-2)T}{4}}   \qquad \textrm{for }  n \leq 6 \\
e^{-T}   \qquad \qquad \textrm{for }  n > 6.
\end{cases} 
 \end{align}
This guarantees that 
 $$ \Psi_T + \eta_T = \Psi_T \left( 1 + \frac{\eta_T}{ \Psi_T}     \right) > 0 $$
 on $ \Sigma_T $ for sufficiently large $T$.


\subsection{Non-linear estimates}  When $ \sigma_n \neq 0$, we  solve the non-linear problem $\mathcal N_T( \Psi_T + \eta_T) =0$ using a contraction mapping argument. To do this, we apply the linear estimates above, along with the following estimate of the \emph{quadratic error term} $\mathcal Q_T$, where 
\begin{align*}
\mathcal Q_T(\eta) : & = \mathcal N_T(\Psi_T+\eta)-(\mathcal N_T(\Psi_T)+ \mathcal L_T(\eta))\\
&=  c_n \sigma_n \left( \Psi_T^{\frac{n+2}{n-2}} + \frac{n+2}{n-2} \Psi_T^{{4/(n-2)}} \eta - (\Psi_T+\eta)^{\frac{n+2}{n-2}}\right) . 
\end{align*}

The arguments in this subsection follow those of \cite[Section 6]{imp} closely. 

\begin{lemma}  [\protect{Cf. \cite[Lemma 8]{imp}}]  \label{lem-quadratic} Let $k \geq 2$. For all $\eta_i$ with $| \Psi_T^{-1}\eta_i|  \leq \frac{1}{4}$ we have that
\begin{align} \label{quad-est}
\|\mathcal Q_T(\eta_1)-\mathcal Q_T(\eta_2)\|_{k-2, \alpha}\lesssim \left(  \max\limits_{i=1, 2} \| \Psi_T^{-1} \eta_i\|_{k-2,\alpha} \right) \|\eta_1-\eta_2\|_{k-2, \alpha}.
\end{align} 
\end{lemma}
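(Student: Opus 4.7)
The plan is to reduce the estimate to an elementary algebraic identity via the substitution $\mu_i := \Psi_T^{-1} \eta_i$, whose pointwise values lie in $[-1/4, 1/4]$ by hypothesis. Writing out $\mathcal Q_T$ in terms of $\mu$ gives
\begin{equation*}
\mathcal Q_T(\eta) \;=\; c_n\sigma_n \Psi_T^{(n+2)/(n-2)} g(\Psi_T^{-1} \eta), \qquad g(\mu) := 1 + \tfrac{n+2}{n-2}\mu - (1+\mu)^{(n+2)/(n-2)}.
\end{equation*}
Since $g$ is smooth on $(-1,\infty)$ with $g(0) = g'(0) = 0$, one can factor $g(\mu) = \mu^2 \phi(\mu)$ for some smooth $\phi$. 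A direct calculation, splitting
$\mu_1^2 \phi(\mu_1) - \mu_2^2 \phi(\mu_2) = (\mu_1^2 - \mu_2^2)\phi(\mu_1) + \mu_2^2[\phi(\mu_1) - \phi(\mu_2)]$
and applying the fundamental theorem of calculus to the second piece, yields
\begin{equation*}
g(\mu_1) - g(\mu_2) \;=\; (\mu_1 - \mu_2)\, \mathcal R(\mu_1, \mu_2),
\end{equation*}
where $\mathcal R(\mu_1, \mu_2) = (\mu_1 + \mu_2)\phi(\mu_1) + \mu_2^2 \int_0^1 \phi'(\mu_2 + t(\mu_1 - \mu_2))\, dt$ is smooth on $(-1,\infty)^2$ and satisfies $|\mathcal R(\mu_1, \mu_2)| \lesssim |\mu_1| + |\mu_2|$ on $[-1/4, 1/4]^2$.

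Next I would use the elementary identity $\Psi_T^{(n+2)/(n-2)} (\mu_1 - \mu_2) = \Psi_T^{4/(n-2)}(\eta_1 - \eta_2)$ to rewrite
\begin{equation*}
\mathcal Q_T(\eta_1) - \mathcal Q_T(\eta_2) \;=\; c_n \sigma_n\, \Psi_T^{4/(n-2)}\, (\eta_1 - \eta_2)\, \mathcal R(\mu_1, \mu_2).
\end{equation*}
The bound (\ref{quad-est}) now reduces to three standard ingredients. First, $C^{k-2, \alpha}(\Sigma_T)$ is a Banach algebra whose constant can be taken independent of $T$, since the injectivity radius of $(\Sigma_T, \gamma_T)$ is uniformly bounded below and the covariant derivatives of $\gamma_T$ are uniformly bounded. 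Second, one has $\|\Psi_T^{4/(n-2)}\|_{k-2,\alpha} \lesssim 1$; this follows by direct inspection from the explicit expression for $\Psi_T$ in Section \ref{subsection:approximate_solution}, as on the neck $\Psi_T^{4/(n-2)} = 2^{4/(n-2)} R^2 e^{-T} \cosh^{4/(n-2)}((n-2)s/2) \leq C$, with $s$-derivatives bounded in the same way, while away from the neck $\Psi_T^{4/(n-2)}$ is a fixed smooth function of the background geometry. Third, because $|\mu_i|_{C^0} \leq 1/4$ keeps us in a fixed compact subinterval on which $\phi$ and $\phi'$ are smooth, composition with these functions is continuous on $C^{k-2, \alpha}(\Sigma_T)$, so that $\|\phi(\mu_i)\|_{k-2, \alpha}$ and $\|\phi'(\cdot)\|_{k-2, \alpha}$ are controlled in terms of $\max_i \|\mu_i\|_{k-2, \alpha}$. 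Since $\mathcal R$ vanishes to first order at the origin, Banach-algebra multiplication then gives $\|\mathcal R(\mu_1, \mu_2)\|_{k-2, \alpha} \lesssim \max_i \|\mu_i\|_{k-2, \alpha}$, and one more application of the algebra property produces the asserted inequality.

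The main technical obstacle is verifying the uniform-in-$T$ Banach algebra constant and the uniform bound on $\|\Psi_T^{4/(n-2)}\|_{k-2, \alpha}$; both amount to explicit but slightly fiddly computations on the cylindrical neck, facilitated by the fact (established in Lemma \ref{lem-metric-decay} and the construction of $\gamma_T$) that cut-off functions and geometric quantities on the neck have $T$-independent bounds. All other steps are routine Hölder-space manipulations.
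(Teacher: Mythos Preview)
Your argument is correct and follows the same strategy as the paper's: factor $\mathcal Q_T(\eta_1)-\mathcal Q_T(\eta_2)$ as $c_n\sigma_n\,\Psi_T^{4/(n-2)}(\eta_1-\eta_2)$ times a remainder that is $O(\max_i\|\mu_i\|_{k-2,\alpha})$, then use the Banach-algebra property and the uniform bound on $\|\Psi_T^{4/(n-2)}\|_{k-2,\alpha}$. The only difference is cosmetic: rather than first writing $g(\mu)=\mu^2\phi(\mu)$ and then splitting, the paper applies the fundamental theorem of calculus directly to $t\mapsto(\Psi_T+t\eta_1+(1-t)\eta_2)^{(n+2)/(n-2)}$, obtaining in one stroke
\[
\mathcal Q_T(\eta_1)-\mathcal Q_T(\eta_2)=c_n\sigma_n\tfrac{n+2}{n-2}(\eta_1-\eta_2)\Psi_T^{4/(n-2)}\int_0^1\Bigl[1-\bigl(1+t\mu_1+(1-t)\mu_2\bigr)^{4/(n-2)}\Bigr]\,dt,
\]
so that your $\mathcal R(\mu_1,\mu_2)$ is replaced by this explicit integral; the subsequent H\"older estimates are identical.
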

\begin{proof}  This follows at once from the expansion
\begin{align*}
\mathcal Q_T(\eta_1) &-  \mathcal Q_T(\eta_2) = c_n \sigma_n \left[ \frac{n+2}{n-2} \Psi_T^{\frac{4}{n-2}}(\eta_1-\eta_2)+ (\Psi_T+\eta_2)^{\frac{n+2}{n-2}}- (\Psi_T+\eta_1)^{\frac{n+2}{n-2}}\right]\\
&= c_n \sigma_n \frac{n+2}{n-2}(\eta_1-\eta_2) \int\limits_0^1 \left[ \Psi_T^{\frac{4}{n-2}}- \left( \Psi_T + t \eta_1 + (1-t)\eta_2\right)^{\frac{4}{n-2}}\right] dt \\
&= c_n \sigma_n \frac{n+2}{n-2}(\eta_1-\eta_2) \Psi_T^{\frac{4}{n-2}} \int\limits_0^1 \left[ 1- \left( 1 + t \Psi_T^{-1}\eta_1 + (1-t)\Psi_T^{-1}\eta_2\right)^{\frac{4}{n-2}}\right] dt . 
\end{align*}
\end{proof}

\begin{prop}   [\protect{Cf. \cite[Proposition 9]{imp}}] \label{prop:fixed_point}  Let $k \geq 2$. Let $\beta$ be a constant such that $\beta \in (\frac{n-2}{4}, \frac{n-2}{2})$ when $3\leq n \leq 6$, and $\beta \in (\frac{n-2}{4}, \frac{n+2}{4})$ when $n > 6$. 
Let $\mathbb B_{\beta}:=\{ \eta \in \mathring C^{k, \alpha} (\Sigma_T): \|\eta\|_{k, \alpha} \leq e^{-\beta T}\}$. For sufficiently large $T$, the mapping $$F_T:\eta \mapsto -\mathcal G_T (\mathcal N_T(\Psi_T)+\mathcal Q_T(\eta))$$ is a contraction mapping $F_T:\mathbb B_{\beta} \rightarrow \mathbb B_{\beta}$.  
\end{prop}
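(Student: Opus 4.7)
The plan is to verify the hypotheses of the Banach fixed point theorem: for $T$ sufficiently large, show that $F_T$ sends $\mathbb{B}_\beta$ into itself, and that $F_T$ is a strict contraction on $\mathbb{B}_\beta$ in the $C^{k,\alpha}$ norm. Since $\mathcal{G}_T$ has operator norm uniformly bounded in $T$ (Proposition \ref{prop-LT}) and $\mathcal{Q}_T(0) = 0$, the two properties reduce to suitable bounds on $\mathcal{N}_T(\Psi_T)$ (which we take from Proposition \ref{prop-NT}) and on the quadratic term (which we obtain from Lemma \ref{lem-quadratic}). The subtle point, and the technical core of the argument, is to convert the multiplicative factor $\|\Psi_T^{-1}\eta_i\|_{k-2,\alpha}$ appearing in (\ref{quad-est}) into a genuine decaying factor in $T$.

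The first step is to observe that on the neck $C_T$, $\Psi_T(s,\theta) = 2R^{(n-2)/2} e^{-(n-2)T/4}\cosh\!\bigl(\tfrac{(n-2)s}{2}\bigr)$ after the cut-off, while off the neck $\Psi_T$ is uniformly bounded below by a positive constant independent of $T$. Setting $\tilde{\Psi}_T = e^{(n-2)T/4}\Psi_T$, the function $\tilde{\Psi}_T$ is uniformly bounded away from zero, and a direct calculation on the neck shows that $\partial_s^j \tilde{\Psi}_T^{-1}$ is uniformly bounded in $s$ for each $j$ (the growth of $\partial_s \tilde{\Psi}_T$ is balanced by the decay of $\tilde{\Psi}_T^{-2}$). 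Hence there is a constant $C$, independent of $T$, with
\begin{equation*}
\|\Psi_T^{-1}\|_{C^{k-2,\alpha}(\Sigma_T)} \leq C\, e^{(n-2)T/4}.
\end{equation*}
Consequently, for any $\eta \in \mathbb{B}_\beta$,
\begin{equation*}
\|\Psi_T^{-1}\eta\|_{k-2,\alpha} \lesssim e^{(n-2)T/4}\|\eta\|_{k-2,\alpha} \lesssim e^{-(\beta - (n-2)/4)T}.
\end{equation*}
Because $\beta > (n-2)/4$, this quantity tends to $0$, so in particular $|\Psi_T^{-1}\eta| \leq 1/4$ holds for $T$ large enough, which permits application of Lemma \ref{lem-quadratic}.

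For the self-mapping property, applying Lemma \ref{lem-quadratic} with $\eta_2 = 0$ and combining with Proposition \ref{prop-LT} gives
\begin{equation*}
\|F_T(\eta)\|_{k,\alpha} \lesssim \|\mathcal{N}_T(\Psi_T)\|_{k-2,\alpha} + \|\Psi_T^{-1}\eta\|_{k-2,\alpha}\,\|\eta\|_{k-2,\alpha} \lesssim \|\mathcal{N}_T(\Psi_T)\|_{k-2,\alpha} + e^{-(2\beta - (n-2)/4)T}.
\end{equation*}
Proposition \ref{prop-NT} bounds the first term by $e^{-(n-2)T/2}$ when $n\leq 6$ and by $e^{-(n+2)T/4}$ when $n>6$, each of which is $o(e^{-\beta T})$ precisely because of the upper bound assumed on $\beta$. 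The second term is also $o(e^{-\beta T})$ since $2\beta - (n-2)/4 > \beta$, again using $\beta > (n-2)/4$. Therefore $\|F_T(\eta)\|_{k,\alpha} \leq e^{-\beta T}$ for $T$ large, establishing $F_T(\mathbb{B}_\beta) \subset \mathbb{B}_\beta$. For the contraction estimate, $F_T(\eta_1) - F_T(\eta_2) = -\mathcal{G}_T(\mathcal{Q}_T(\eta_1) - \mathcal{Q}_T(\eta_2))$, so Proposition \ref{prop-LT} and Lemma \ref{lem-quadratic} yield
\begin{equation*}
\|F_T(\eta_1) - F_T(\eta_2)\|_{k,\alpha} \lesssim \max_{i=1,2}\|\Psi_T^{-1}\eta_i\|_{k-2,\alpha}\cdot \|\eta_1-\eta_2\|_{k-2,\alpha} \lesssim e^{-(\beta-(n-2)/4)T}\|\eta_1-\eta_2\|_{k,\alpha},
\end{equation*}
which gives a contraction for $T$ large. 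The main obstacle I anticipate is the sharp book-keeping in the previous display: the window $\beta \in (\tfrac{n-2}{4}, \tfrac{n-2}{2})$ (respectively $(\tfrac{n-2}{4}, \tfrac{n+2}{4})$) is precisely the range in which both the error term from $\mathcal{N}_T(\Psi_T)$ is small enough to absorb into $e^{-\beta T}$ and the quadratic term inherits an extra decaying factor from $\Psi_T^{-1}\eta$, and the $C^{k-2,\alpha}$ bound on $\Psi_T^{-1}$ above is the one place where the cylindrical/Schwarzschild neck geometry really has to be controlled carefully.
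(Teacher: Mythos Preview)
Your proof is correct and follows essentially the same approach as the paper: the key estimate $\|\Psi_T^{-1}\eta\|_{k-2,\alpha}\lesssim e^{-(\beta-(n-2)/4)T}$ (which the paper states as \eqref{contract-est} with only a reference to ``the explicit expression of $\Psi_T$'') is what drives both the self-mapping and contraction properties, and you have supplied a slightly more detailed justification for it via the bound on $\|\Psi_T^{-1}\|_{C^{k-2,\alpha}}$. The remaining steps---invoking Proposition~\ref{prop-LT}, Lemma~\ref{lem-quadratic} (with $\eta_2=0$ for the self-map), and Proposition~\ref{prop-NT} together with the dimensional window for $\beta$---match the paper line by line.
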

\begin{proof}  Recall that ``$\lesssim$" denotes an inequality up to multiplication by a constant that is independent of $T$. In view of the explicit expression of $\Psi_T$ in Section \ref{subsection:approximate_solution}, we easily see that for $ \eta \in \mathbb B_{\beta}$ we have that
\begin{align} \label{contract-est} \|\Psi_T^{-1}\eta\|_{k-2,\alpha} \lesssim e^{-\beta T} e^{\frac{(n-2)}{4}T}.
\end{align}  
Since $\beta>\frac{n-2}{4}$, the right-hand side of \eqref{contract-est} goes to $0$ uniformly as $T\rightarrow \infty$. Therefore, for any $\eta_1, \eta_2\in \mathbb B_{\beta}$ and sufficiently large $T$, using the uniform bound on $\mathcal G_T$ from Proposition \ref{prop-LT}, we have
\begin{align}
\|F_T(\eta_1)-F_T(\eta_2)\|_{k, \alpha} &= \|\mathcal G_T (\mathcal Q_T (\eta_2)-\mathcal Q_T(\eta_1) ) \|_{k, \alpha} \nonumber \\ 
& \lesssim \|\mathcal Q_T(\eta_2)-\mathcal Q_T(\eta_1)\|_{k-2, \alpha}.  \nonumber 
\end{align} 
By (\ref{quad-est}) and (\ref{contract-est}), $F_T$ is a contraction mapping on $\mathbb B_{\beta}$ for $T$ large.  To see that $F_T$ maps $\mathbb B_{\beta}$ into itself, we note that  by Proposition \ref{prop-NT} and the upper bound for $ \beta$ we have that $\|\mathcal N_T (\Psi_T)\|_{k-2, \alpha}= o(e^{-\beta T})$, while (\ref{quad-est}) and (\ref{contract-est}) imply that $\|\mathcal Q_T(\eta)\|_{k-2,\alpha}= o(e^{-\beta T})$.  Using once more the $T$-independent bound for the norm of $\mathcal G_T$ from Proposition \ref{prop-LT},  we conclude that indeed $F_T(\eta)\in \mathbb B_{\beta}$ for $\eta \in \mathbb B_{\beta}$.
\end{proof}

Choose $\beta$ as in Proposition \ref{prop:fixed_point}.  For sufficiently large $T$, $F_T$ has a unique fixed point $\eta_T\in \mathbb B_{\beta}$.  If we let $\hat \Psi_T := \Psi_T + \eta_T$, then $\mathcal N_T(\hat \Psi_T)=0$.  Since $\eta_T\in \mathbb B_{\beta}$, by \eqref{contract-est}, we have that $\hat \Psi_T>0$ for large $T$.  Thus we have solved the constant scalar curvature equation $R(\hat\Psi_T^{{4/(n-2)}} \gamma_T)=\sigma_n$.  Moreover, by elliptic regularity, $\hat \Psi_T \in C^{k, \alpha}(\Sigma_T)$.  Let $\hat \gamma_T=\hat\Psi_T^{{4/(n-2)}} \gamma_T$.  

We remark that if the metrics $g_i$ are smooth to start with, then we can bootstrap to conclude that $\hat \gamma_T$ is also smooth. 


\subsection{Volume estimate}  
We now derive estimates on the volume of $(\Sigma_T, \hat \gamma_T)$.  
The following proposition will complete the proof of Theorem \ref{imp-vol}. 
\begin{prop} The volume of $(\Sigma_T, \hat \gamma_T)$ approaches $\mathrm{vol}(\Sigma_1, \gamma_1)+\mathrm{vol}(\Sigma_2,  \gamma_2)$ as $T\rightarrow \infty$. 
\end{prop}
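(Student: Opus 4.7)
The plan is to decompose $\Sigma_T$ into three pieces and compute the $\hat{\gamma}_T$-volume of each separately. Let $A_i := \Sigma_i \setminus B_{\gamma_i}(p_i, R)$, viewed as subsets of $\Sigma_T$, and let $N_T := \Sigma_T \setminus (A_1 \sqcup A_2)$ be the ``neck.'' The goal is to show $\mathrm{vol}(A_i, \hat{\gamma}_T|_{A_i}) \to \mathrm{vol}(A_i, \gamma_i)$ and $\mathrm{vol}(N_T, \hat{\gamma}_T|_{N_T}) \to \mathrm{vol}(B_{\gamma_1}(p_1, R), \gamma_1) + \mathrm{vol}(B_{\gamma_2}(p_2, R), \gamma_2)$, which will sum to the desired limit $\mathrm{vol}(\Sigma_1, \gamma_1) + \mathrm{vol}(\Sigma_2, \gamma_2)$.

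The outer contribution is immediate. On $A_i$, the construction leaves $\gamma_T = \gamma_i$ and $\Psi_T = 1$ unchanged, so $\hat{\Psi}_T = 1 + \eta_T$. Since $\|\eta_T\|_{C^0} \le \|\eta_T\|_{k,\alpha} \le e^{-\beta T} \to 0$, the integrand $(1 + \eta_T)^{2n/(n-2)}$ converges to $1$ uniformly, and dominated convergence delivers the claim.

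The heart of the argument is the analysis of the neck. By the symmetric roles of the two sides, it suffices to treat $N_T^- := \{s \le 0\} \cap C_T$ and verify its contribution converges to $\mathrm{vol}(B_{\gamma_1}(p_1, R), \gamma_1)$. One splits $N_T^-$ further into the ``cylindrical'' part $\{-T/2 \le s_1 \le -1\}$, where $\gamma_T$ coincides exactly with $\Psi_1^{-4/(n-2)}\gamma_1$, and the ``inner'' part $\{-1 \le s_1 \le 0\}$. On the cylindrical part, the change of variable $r_1 = Re^{-T/2 - s_1}$ transforms the integral into
\[
\int_{\{Re^{-T/2+1} \le r_1 \le R\}} \Bigl(1 + \frac{\Psi_2}{\Psi_1} + \frac{\eta_T}{\Psi_1}\Bigr)^{2n/(n-2)} \, d\mu_{\gamma_1}.
\]
The explicit formulas yield $\Psi_2/\Psi_1 = e^{(n-2)s_1} = (Re^{-T/2}/r_1)^{n-2}$, bounded uniformly by $e^{-(n-2)} < 1$ on $\{s_1 \le -1\}$ and tending pointwise to $0$ at each fixed $r_1 > 0$. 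The hypothesis $\beta > (n-2)/4$ combined with the lower bound $\Psi_1 \ge R^{(n-2)/2}e^{-(n-2)T/4}$ on $\{s_1 \le 0\}$ gives $\|\eta_T/\Psi_1\|_{C^0(N_T^-)} \le R^{-(n-2)/2} e^{-(\beta-(n-2)/4)T} \to 0$. The integrand is thus uniformly bounded and converges pointwise to $1$ on $B_{\gamma_1}(p_1, R)\setminus\{p_1\}$, so dominated convergence on $B_{\gamma_1}(p_1, R)$ returns $\mathrm{vol}(B_{\gamma_1}(p_1, R), \gamma_1)$ in the limit. On the inner part, both $\Psi_1$ and $\Psi_2$ are $O(e^{-(n-2)T/4})$, hence $\hat{\Psi}_T^{2n/(n-2)} = O(e^{-nT/2})$, while the $\gamma_T$-measure of this bounded-length cylindrical slab remains bounded as $T \to \infty$, so this contribution is $o(1)$.

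The principal subtlety is the relative smallness of $\eta_T$ compared to the degenerating conformal factor $\Psi_T$: what is required is $\eta_T/\Psi_T$ small, not just $\eta_T$ small. This is precisely what the bound $\beta > (n-2)/4$ in Proposition \ref{prop:fixed_point} provides, and it is this fine-tuning between the approximate solution and the fixed-point correction that makes the volume on the glued manifold track the sum of the original volumes.
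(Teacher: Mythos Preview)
Your argument is essentially correct, with one small slip. On $A_i = \Sigma_i \setminus B_{\gamma_i}(p_i, R)$ you assert $\gamma_T = \gamma_i$ and $\Psi_T = 1$, but this holds only for $r_i \ge 2R$: on the annulus $R \le r_i \le 2R$ the function $\psi$ is still interpolating, so there $\gamma_T = \Psi_i^{-4/(n-2)}\gamma_i$ and $\Psi_T = \Psi_i$. The fix is immediate, since $\Psi_i$ is bounded below on $A_i$ by a positive constant independent of $T$ and one still has $\Psi_T^{4/(n-2)}\gamma_T = \gamma_i$, hence $\hat\gamma_T = (1 + \eta_T/\Psi_i)^{4/(n-2)}\gamma_i \to \gamma_i$ uniformly. (You also tacitly drop the cut-off $\chi_{2,T}$ when writing $\hat\Psi_T/\Psi_1 = 1 + \Psi_2/\Psi_1 + \eta_T/\Psi_1$ on $[-T/2,-1]$; near $s = -T/2$ that cut-off is active, but $\Psi_2/\Psi_1$ is already $O(e^{-(n-2)T/2})$ there, so the omission is harmless.)

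Your decomposition differs from the paper's. You split at the fixed radius $r_i = R$ and then show that the neck contributes the \emph{positive} amount $\mathrm{vol}(B_{\gamma_1}(p_1,R),\gamma_1) + \mathrm{vol}(B_{\gamma_2}(p_2,R),\gamma_2)$, via dominated convergence on the expanding annuli $\{Re^{1-T/2} \le r_i \le R\}$. The paper instead splits at the $T$-dependent level $|s| = T/4$: the outer region then exhausts each $\Sigma_i$ and carries the full volume in the limit, while the inner slab $[-T/4,T/4]\times\mathbb S^{n-1}$ has $\hat\gamma_T$-volume tending to zero (each half being dominated by $\mathrm{vol}(B_{\gamma_i}(p_i, e^{-T/4}R),\gamma_i)$). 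The paper's choice makes the bookkeeping slightly cleaner since the neck contribution vanishes rather than having to match a specific value; your approach works just as well and, as you note, makes the role of the constraint $\beta > (n-2)/4$ more explicit.
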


\begin{proof} We note that $\Sigma_T \setminus ([-\frac{T}{4}, \frac{T}{4}]\times \mathbb S^{n-1})$ corresponds to $[ \Sigma_1\setminus B_{\gamma_1}(p_1, e^{-\frac{T}{4}} R)] \cup [\Sigma_2 \setminus B_{\gamma_2} (p_2, e^{-\frac{T}{4}}R)]$.  On the respective components, we have $\Psi_i^{{4/(n-2)}}\gamma_T = \gamma_i$.  Furthermore, $\frac{ \Psi_T}{\Psi_i} \rightarrow 1$ uniformly as $T\rightarrow \infty$ on each of the respective components, and by (\ref{contract-est}), $\frac{\hat \Psi_T}{\Psi_T} \rightarrow 1$ uniformly as well.  Thus the volume of $(\Sigma_T \setminus ([-\frac{T}{4}, \frac{T}{4}]\times \mathbb S^{n-1}), \hat \gamma_T)$ tends to $\mbox{vol}(\Sigma_1, \gamma_1)+\mbox{vol}(\Sigma_2,  \gamma_2)$.   

We show now that $\mbox{vol}([-\frac{T}{4}, \frac{T}{4}]\times \mathbb S^{n-1}, \hat \gamma_T)$ tends to zero as $T \to \infty$. Since  $\mbox{vol} ([-1, 1]\times \mathbb S^{n-1}, \gamma_T)$ is uniformly bounded,  by the estimate of $\Psi_T$, we see that $\mbox{vol} ([-1, 1]\times \mathbb S^{n-1}, \Psi_T^{{4/(n-2)}} \gamma_T)$  goes to zero.  By (\ref{contract-est}), we have $\left| \frac{\hat \Psi_T}{\Psi_T}\right| \leq 2$ on $\Sigma_T$ for all $T$ sufficiently large. Thus $\mbox{vol} ([-1, 1]\times \mathbb S^{n-1}, \hat \gamma_T)$ also goes tends to zero. 
 
We next consider the right half $[1, \frac{T}{4}]\times \mathbb S^{n-1}$. Now, $\mbox{vol}([1, \frac{T}{4}]\times \mathbb S^{n-1}, \Psi_2^{{4/(n-2)}} \gamma_T)$ is less than $\mbox{vol}(B_{\gamma_2}(p_2, e^{-\frac{T}{4}}R), \gamma_2)$ and hence tends to $0$ as $T \to \infty$. In this region, we have that  $\left| \frac{\Psi_T}{\Psi_2}\right| \leq 2$ and $\left| \frac{\hat \Psi_T}{\Psi_T}\right| \leq 2$. Hence $\mbox{vol}([1, \frac{T}{4}]\times \mathbb S^{n-1}, \Psi_2^{{4/(n-2)}} \hat \gamma_T)$ tends to $0$. The left half is dealt with similarly.   
\end{proof} 


\section{Localizing the gluing: Proof of Theorem \ref{glue}}

The idea of combining the theory of local scalar curvature deformation in conjunction with a conformal gluing method in the proof here is exactly as in \cite[p. 57-58]{cd}.

\begin{proof} Fix two points $p_i \in U_i$. There exists $\rho_0 >0$ such that $B_{g_i} (p_i, \rho_0) \subset U_i$, such that for any $\rho \in (0, \rho_0)$ the operators $\Delta_{g_i} + \frac{ \sigma_n}{n-1}$ have positive Dirichlet spectrum on $B_{g_i} (p_i, \rho)$, and such that $U_i \setminus \overline{B_{g_i} (p_i, \rho/2)}$ is not $V$-static. (The last assertion follows from an argument as in the proof of Proposition \ref{prop:coest}.) Fix $\rho \in (0, \rho_0)$. 

Applying Theorem \ref{imp-vol} with $\Sigma_i:= \overline{ B_{g_i} (p_i, \rho)}$, $\gamma_i:=g_i$, we get a family of metrics $\{ \hat g_T: = \gamma_T \}$, with $R(\hat g_T ) = \sigma_n $ on $\Sigma_1 \#\Sigma_2\supset \Sigma_i \setminus B_{g_i} (p_i, \rho/2)$, such that $ \hat g_T $ converges to $ g_i $ in $C^{k, \alpha} (\Sigma_i \setminus B_{g_i} (p_i, \rho/2) )$ and $\mbox{vol}(\Sigma_1 \#\Sigma_2, \hat g_T) \rightarrow  \mbox{vol} (\Sigma_1, g_1)+ \mbox{vol} (\Sigma_2, g_2)$.  

We patch back in the original metric $g_i$, transitioning from $g_i$ near $\partial \Sigma_i$ to $\hat g_T$ near $\partial B_{g_i}(p_i, \rho/2)$ in the usual way: 
let $  0 \le \chi_i  \le 1 $ be a fixed smooth function on  $M_i$ 
such that
 $ \chi_i = 1 $ in a neighborhood of $ \p \Sigma_i $ and  $ \chi_i = 0 $ in a neighborhood of $ \p B_{g_i} (p_i, \rho/2) $.
Define $\tilde g_T =  \chi_i g_i + ( 1 - \chi_i) \hat g_T $ on  $\Sigma_i \setminus  B_{g_i} (p_i, \rho/2)   $.  
Then $\tilde g_T$ converges to  $g_i$ in $C^{k, \alpha}(\overline{U}_i \setminus B_{g_i}(p_i, \rho/2))$, 
 $\mbox{vol}(\overline{U}_1 \#\overline{U}_2, \tilde g_T) \rightarrow  \mbox{vol} (\overline{U}_1, g_1)+ \mbox{vol} ( \overline{U}_2, g_2)$,  and $R(\tilde g_T)\rightarrow \sigma_n$, with $R(\tilde g_T)=\sigma_n$ in a 
 neighborhood of $\partial U_i$ and $ \partial B_{g_i}(p_i, \rho/2)$. 
Since $g_i$ is not $V$-static on $ {U}_i \setminus \overline{  B_{g_i} (p_i, \rho/2) }$, Theorem \ref{locdefTake2} can now be applied on  $ U_i \setminus \overline{ B_{g_i} (p_i, \rho/2)}$ to deform $\tilde g_T$ (for sufficiently large $ T$) to a metric $\tilde g$ such that  $( \tilde g_T  - \tilde{g} ) $ has compact support  in  ${U_i} \setminus \overline{ B_{g_i}(p_i, \rho/2) }$, $R(\tilde g)=\sigma_n$, and 
  \begin{eqnarray*} 
\mbox{vol}  (\overline{U}_1, g_1)+ \mbox{vol} ( \overline{U}_2, g_2) = 
  \mbox{vol} (\overline{U}_1 \setminus B_{g_1}(p_1, \rho/2),   \tilde g )  + \mbox{vol} (\overline{U}_2 \setminus B_{g_2}(p_2, \rho/2),  \tilde g )  \\  + \mbox{vol}(\overline{U}_1 \#\overline{U}_2 \setminus   \left(( \overline{U}_1 \setminus B_{g_1}(p_1, \rho/2)) \sqcup  ( \overline{U}_2 \setminus B_{g_2}(p_2, \rho/2) ) \right),  \hat g_T).
\end{eqnarray*}
The metric $g$ on $M_1 \# M_2$ given by  $ g = g_i $ on $M_i \setminus U_i$ , 
  $ g = \tilde g $ on  $\overline U_i \setminus \overline{  B_{g_i} (p_i, \rho/2) } $, 
  and $ g = \hat g_T $ on $(\overline{U}_1 \# \overline{U}_2 )
  \setminus  \left( ( \overline{U}_1 \setminus B_{g_1}(p_1, \rho/2)  )
 \sqcup  ( \overline{U}_2 \setminus B_{g_2}(p_2, \rho/2))\right)$  has all the properties asserted in Theorem \ref{glue}.  
 \end{proof}
 
 
\section{Counterexamples to Min-Oo's conjecture with non-trivial topology and arbitrarily large volume}

In \cite{MinOo},  Min-Oo conjectured that 
if $(\overline \Omega, g)$ is a compact Riemannian manifold with boundary 
such that $g$ has scalar curvature at least $n(n-1)$, such that
$\p \Omega$ is isometric to the standard round sphere $\mathbb{S}^{n-1}$,
and such that $ \p \Omega$ is totally geodesic in $(\overline \Omega, g)$, 
then $(\overline \Omega, g)$  is isometric to the standard round hemisphere $\mathbb{S}^n_+$. 
Various affirmative partial results under stronger hypotheses have been achieved in this direction. We refer the reader to \cite{brendle-marques-neves} for a comprehensive account of these contributions. 
Recently, Brendle, Marques, and Neves constructed a counterexample to Min-Oo's conjecture:

\begin{thm} [\protect{\cite[Theorem 7]{brendle-marques-neves}}] 
\label{thm:brendle-marques-neves} 
Given any integer $n \geq 3$, there exists a smooth metric $g$ on the hemisphere $\mathbb{S}^n_+$ with the following properties: 
\begin{enumerate}
\item The scalar curvature of $g$ is at least $n(n-1)$ at every point on $\mathbb{S}^{n}_+$. 
\item The scalar curvature of $g$ is strictly greater than $n(n-1)$ at some point on $\mathbb{S}^n_+$.
\item The metric $g$ agrees with the standard round metric on $\mathbb{S}^n_+$ in a neighborhood of the equator $\partial \mathbb{S}_+^{n}$. 
\end{enumerate}
\end{thm}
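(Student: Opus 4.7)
The plan is to produce $g$ as a small compactly supported perturbation of the round metric $g_0$ on $\mathbb{S}^n_+$ supported in a geodesic ball strictly inside the interior of the hemisphere. Condition (3) is then automatic, and the task reduces to arranging that $R(g)\geq n(n-1)$ everywhere with strict inequality on an open set.

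The central obstruction is that $g_0$ is static: the space of static potentials is $(n+1)$-dimensional, spanned by the restrictions to $\mathbb{S}^n_+$ of the ambient coordinate functions $x^1,\ldots,x^{n+1}$, each satisfying $L_{g_0}^* f = 0$. For any compactly supported symmetric $(0,2)$-tensor $h$ in the interior, integration by parts gives
\[
\int_{\mathbb{S}^n_+} f\, L_{g_0}(h)\, d\mu_{g_0} = \int_{\mathbb{S}^n_+} h\cdot L_{g_0}^*(f)\, d\mu_{g_0}=0
\]
for every such $f$. Since the static potentials include functions strictly positive on $\mathbb{S}^n_+$ (e.g., $x^{n+1}$), it follows that $L_{g_0}(h)$ cannot be non-negative and nontrivial. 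In particular, Theorem \ref{locdefTake2} is unavailable here, and no purely first-order argument can produce the desired strict increase.

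To bypass this, I would work at second order. Writing $R(g_0+h)=n(n-1)+L_{g_0}(h)+Q_{g_0}(h)+O(\|h\|^3)$, the plan is to choose a nontrivial compactly supported $h$ with $L_{g_0}(h)\equiv 0$ pointwise and $Q_{g_0}(h)\geq 0$ with strict positivity somewhere. The vanishing of $L_{g_0}(h)$ is arranged using the large kernel of $L_{g_0}$ on compactly supported tensors; for instance, one may restrict to transverse-traceless $h$, for which $\tr_{g_0} h=0$ kills the Ricci contribution on the round sphere and the remaining divergence terms vanish by assumption. The positivity of $Q_{g_0}(h)$ on an open set is the main obstacle: $Q_{g_0}$ is a quadratic polynomial in $h$, $\nabla h$, and $\nabla^2 h$ with coefficients depending on $g_0$, and it is not automatically sign-definite.

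The positivity would be handled by a localization argument. Fix an interior point $p\in \mathbb{S}^n_+$, work in Riemannian normal coordinates centered at $p$, and take $h$ supported in the ball $B(p,r)$ of small radius $r>0$. After rescaling by $r^{-1}$, the ambient metric approaches the Euclidean metric and $Q_{g_0}$ converges to its Euclidean counterpart, with all curvature corrections suppressed by positive powers of $r$. An explicit choice of model tensor $h$ in Euclidean space, built from second derivatives of a bump function and suitably projected onto the kernel of $L_{g_0}$, can be arranged to make the leading part of $Q$ strictly positive on an open set. Undoing the rescaling, for $r$ and $t>0$ sufficiently small, the metric $g:=g_0+t\,h$ satisfies $R(g)\geq n(n-1)$ with strict inequality near $p$, while still coinciding with $g_0$ near the equator. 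A further compactly supported correction of order $t^3$ may be added to absorb the cubic remainder term without destroying the positivity.
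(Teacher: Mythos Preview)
This theorem is quoted from \cite{brendle-marques-neves} and is not proved in the present paper. Your proposal has a genuine gap at the decisive step: you need a compactly supported $h$ with $L_{g_0}(h)\equiv 0$ and $Q_{g_0}(h)\geq 0$, strict somewhere, and you argue this can be found in the rescaled Euclidean limit. But no such $h$ exists there. For any nontrivial compactly supported transverse-traceless tensor on $\mathbb{R}^n$, integrating the identity $R\sqrt{g}=\partial_k(\cdots)+\sqrt{g}\,g^{ij}(\Gamma^k_{jl}\Gamma^l_{ik}-\Gamma^k_{ij}\Gamma^l_{kl})$ gives
\[
\int_{\mathbb{R}^n} Q_\delta(h)\,dx \;=\; -\tfrac14\int_{\mathbb{R}^n}|\nabla h|^2\,dx \;<\; 0,
\]
so $Q_\delta(h)$ must be strictly negative on a set of positive measure. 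More conceptually, the rigidity statement in the positive mass theorem forces any compactly supported perturbation of $\mathbb{R}^n$ with $R\geq 0$ to be flat, hence to have $R\equiv 0$; this obstruction already appears at second order and applies to arbitrary compactly supported $h$, not only TT ones. Since your localization deliberately suppresses the sphere's curvature corrections, the Euclidean obstruction rules out your construction for all small $r$. The proposed cubic correction cannot help, as the failure occurs already at order $t^2$.

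The actual Brendle--Marques--Neves argument avoids this by working at the boundary rather than in the interior: they first produce a metric on $\mathbb{S}^n_+$ with $R\geq n(n-1)$, with round induced metric on $\partial\mathbb{S}^n_+$ but strictly positive mean curvature there, and then attach a spherical cap slightly beyond the equator and smooth the resulting corner. It is precisely the boundary term in the integration by parts against a positive static potential---the term that vanishes for your compactly supported $h$---that supplies the room needed to increase the scalar curvature.
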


We can construct more complicated counterexamples to Min-Oo's conjecture from counterexamples such as these by combining 
Theorem \ref{locdefTake2} and Theorem \ref{thm-imp-strenthed} (see also Remark \ref{rem:alternativelyGL}): 

\begin{prop} \label{prop-Min-Oo}
Let  $ g$ be a metric on $\mathbb{S}^n_+$ that is a counterexample to Min-Oo's conjecture.  Suppose $ g $ agrees with the standard round metric $\bar g$ in a neighborhood of the equator $\partial \mathbb{S}_+^{n}$. Given any constant $ V_0 > 0 $,  there exists a counterexample $(\mathbb{S}^n_+, \hat g)$ to Min-Oo's conjecture such that 
$\mathrm{vol}(\mathbb{S}_+^{n}, \hat g) \geq V_0$.
\end{prop}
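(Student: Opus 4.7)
The plan is to enlarge the volume of the Brendle--Marques--Neves counterexample $(\mathbb{S}^n_+, g)$ by iteratively connect-summing with copies of a round sphere $(\mathbb{S}^n, \bar g_0)$ near an interior point where $g$ agrees with $\bar g$. Each iteration will follow closely the strategy of Theorem \ref{glue}---namely, a conformal connect-sum construction via Theorem \ref{thm-imp-strenthed} followed by a localized scalar curvature correction via Theorem \ref{locdefTake2}---and will add a fixed positive amount of volume while preserving the three defining features of a Min-Oo counterexample: $R \geq n(n-1)$ everywhere, strict inequality somewhere, and agreement with $\bar g$ in a neighborhood of $\partial \mathbb{S}^n_+$. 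Since $\mathbb{S}^n_+ \# \mathbb{S}^n \cong \mathbb{S}^n_+$, the topology is preserved, and finitely many iterations produce a counterexample on $\mathbb{S}^n_+$ with volume at least $V_0$.

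For a single iteration, I would first fix an interior point $q$ in the neighborhood of $\partial \mathbb{S}^n_+$ on which $g = \bar g$, together with a radius $\rho > 0$ so small that $\overline{B_g(q, \rho)}$ is a round ball on which $\Delta_{\bar g} + n$ has positive Dirichlet spectrum. Taking an auxiliary round sphere $(\mathbb{S}^n, \bar g_0)$ of scalar curvature $n(n-1)$ together with a comparable round ball $\overline{B_{\bar g_0}(p', \rho)}$, I would apply Theorem \ref{thm-imp-strenthed} to the pair $(\overline{B_g(q, \rho)}, \bar g)$ and $(\overline{B_{\bar g_0}(p', \rho)}, \bar g_0)$ to produce a family $\{\hat g_T\}$ of $C^{k, \alpha}$ metrics on $\overline{B_g(q, \rho)} \# \overline{B_{\bar g_0}(p', \rho)}$ with $R(\hat g_T) \equiv n(n-1)$, converging in $C^{k, \alpha}$ to the disjoint round balls outside the gluing neck, and with total volume tending to the sum of the two ball volumes as $T \to \infty$. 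Patching $\hat g_T$ into $\mathbb{S}^n_+$ via the cutoff interpolation from the proof of Theorem \ref{glue} then yields an intermediate metric $\tilde g_T$ on $\mathbb{S}^n_+ \# \mathbb{S}^n \cong \mathbb{S}^n_+$ that coincides with $g$ outside $B_g(q, \rho)$ and with $\hat g_T$ near the neck, and whose scalar curvature differs from $n(n-1)$ only in a cutoff annulus where $R(\tilde g_T) - n(n-1) \to 0$ in $C^{k-2, \alpha}$ as $T \to \infty$. A final application of Theorem \ref{locdefTake2} would correct $R$ to be identically $n(n-1)$ throughout the annulus via a compactly supported deformation, producing a new metric $\hat g^{(1)}$ with $R(\hat g^{(1)}) \geq n(n-1)$ everywhere, $R(\hat g^{(1)}) > n(n-1)$ at the untouched original bump of $g$, $\hat g^{(1)} = \bar g$ near $\partial \mathbb{S}^n_+$, and volume larger than $\mathrm{vol}(g)$ by approximately $\mathrm{vol}(\overline{B_{\bar g_0}(p', \rho)}, \bar g_0)$.

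The hard part will be verifying the non-$V$-static hypothesis needed to invoke Theorem \ref{locdefTake2}. The natural choice for the deformation annulus, $B_g(q, \rho) \setminus \overline{B_g(q, \rho/2)}$, lies in the round region of $g$, where $\tilde g_T$ agrees with $\bar g$ in a collar; since the Einstein metric $\bar g$ is $V$-static on every subdomain (Example \ref{example_Einstein}), the annulus itself fails the required condition. My way around this is to exploit that Theorem \ref{locdefTake2} only requires the ambient domain $\Omega$---not the support $\Omega_0 \subset \Omega$ of the prescribed scalar curvature perturbation $\sigma$---to be non-$V$-static. I would take $\Omega$ to be a connected open subset of $\mathrm{int}(\mathbb{S}^n_+)$ containing both the cutoff annulus and a piece of the bump region of $g$, while keeping $\Omega_0$ equal to the cutoff annulus itself. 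Since $R(g)$ is non-constant on $\Omega$ (equal to $n(n-1)$ on the round piece, strictly greater on the bump piece), Proposition \ref{prop:csc} forces $g$---and hence $\tilde g_T$ for $T$ large, by the openness of the condition in $C^{k, \alpha}$ recorded in Remark \ref{rem:nearbymetrics}---to be non-$V$-static on $\Omega$. The metric correction produced by Theorem \ref{locdefTake2} may then extend through $\Omega$, but the scalar curvature change remains confined to $\Omega_0$, so the bump of $g$ (lying in $\Omega \setminus \Omega_0$) retains $R > n(n-1)$, and the iteration can be continued until the total volume exceeds $V_0$.
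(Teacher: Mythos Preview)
There is a genuine gap in your handling of the second summand. After applying Theorem \ref{thm-imp-strenthed} to the two round balls and patching the result back into $\mathbb{S}^n_+$ and into $\mathbb{S}^n$ by cutoff interpolation as in the proof of Theorem \ref{glue}, the intermediate metric $\tilde g_T$ carries its scalar curvature defect in \emph{two} annuli, one on each side of the neck---not one. Your enlargement of $\Omega$ into the bump region deals with the $\mathbb{S}^n_+$-side annulus, and that part of the argument is fine. But the annulus on the $\mathbb{S}^n$ side is surrounded by the round metric $\bar g_0$, which by Example \ref{example_Einstein} is $V$-static on every subdomain. Any domain $\Omega'$ contained in the $\mathbb{S}^n$ piece on which $\tilde g_T$ is $C^{k,\alpha}$-close to a fixed reference metric will be close to $\bar g_0$, and the limit of $V$-static metrics obstructs the use of Theorem \ref{locdefTake2} via Remark \ref{rem:nearbymetrics}. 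Stretching $\Omega$ through the neck to reach the bump does not help either: the neck geometry depends on $T$, so $\tilde g_T$ is not close to any fixed reference on such a domain, and the uniform constants you need from Remark \ref{rem:nearbymetrics} are unavailable. (Taking $\Sigma_2$ to be most or all of the round $\mathbb{S}^n$ to avoid the second patching fails the spectral hypothesis of Theorem \ref{thm-imp-strenthed}, since $\Delta_{\bar g_0}+n$ does not have positive Dirichlet spectrum on large round caps.)

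The paper sidesteps this obstruction by connect-summing $(\mathbb{S}^n_+, g)$ with a copy of \emph{itself} rather than with a round sphere. On each side the complement $\mathbb{S}^n_+ \setminus \overline{B_g(p,\rho/2)}$ contains the bump and hence is non-$V$-static, so Theorem \ref{locdefTake2} applies separately on each side exactly as in Theorem \ref{glue}. The outcome is a metric $\hat g$ on $\mathbb{S}^n_+ \# \mathbb{S}^n_+$ with two boundary components, and since $\hat g = g = \bar g$ near each of them one can cap one boundary with a standard round hemisphere with no interpolation whatsoever---so no third correction is needed. Topologically this returns $\mathbb{S}^n_+$, and each iteration adds a definite amount of volume.
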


\begin{proof} By analyticity, cf. the comments following \eqref{eqn:statickappa}, the metric $g$ cannot be $V$-static on $\mathbb{S}^n_+$. 

Let $B_g(p, \rho) \subset \mathbb{S}^n_+$ be  a geodesic ball  such that $R(g) = n (n-1)$ on $B_g(p, \rho )$, 
 $\Delta_{g} + n$ has positive Dirichlet spectrum on $B_g(p, \rho )$, and $g$ is not $V$-static on $\mathbb{S}^n_+ \setminus \overline{B_g(p, \rho/2)}$. 
We can  proceed exactly as in the proof of Theorem \ref{glue} and glue $(\mathbb{S}^n_+, g)$ to a copy of itself,  first applying Theorem \ref{imp-vol} in case $\Sigma_1$ and $\Sigma_2$ are taken to be $\overline{B_g(p, \rho)}$ from each copy, and then applying Theorem \ref{locdefTake2} to each copy of the non-$V$-static region $\mathbb{S}^n_+ \setminus \overline{B_g(p, \rho/2)}$.  The resulting metric $ \hat{g}$ on $\mathbb{S}^n_+   \#  \mathbb{S}^n_+$ agrees with $g$ to infinite order at
 $\p \mathbb{S}^n_+$ in both copies of $\mathbb{S}^n_+$. Moreover, $R(\hat g)=n(n-1)$ in the neck region while $ R( \hat{g} ) = R(g) $ on $ \mathbb{S}^n_+ \setminus \overline{B_g(p, \rho/2)}$ in both copies of $\mathbb{S}^n_+$, and $ \mbox{vol} (\mathbb{S}^n_+   \#  \mathbb{S}^n_+ , \hat{g}) = 2 \mbox{vol}(\mathbb{S}^n_+, g)$. 
 Because $g$ coincides with the standard round metric $\bar g$ near $\partial \mathbb{S}^n_+$ and $\hat g$ extends smoothly to $\bar g$ across $\p \mathbb S^n_+$, we can then add a standard round hemisphere to one of the two copies of $\mathbb{S}^n_+$.  Clearly, this process can be iterated, increasing the volume at every stage by a fixed amount.   
\end{proof} 

In conjunction with Theorem \ref{thm:brendle-marques-neves}, it follows that there are counterexamples to Min-Oo's conjecture of arbitrarily large volume. In contrast, it is shown in \cite{MiaoTamMinOo} that a metric $g$ on $\mathbb{S}^n_+$ that satisfies conclusions (1) and (3) of Theorem \ref{thm:brendle-marques-neves} and which is also $C^2$-close to the standard  round metric $ \bar{g}$ on $\mathbb{S}^n_+$ has volume less than  
$\mbox{vol}(\mathbb{S}^n_+, \bar{g})$. 

\begin{remark} In the proof of Proposition \ref{prop-Min-Oo}, we can arrange that $\hat g$ agrees with the round metric $\bar g$ near $\p \mathbb S^n_+$.  Indeed, when applying the proof of Theorem \ref{locdefTake2}, we can first find a collar neighborhood $N\subset \mathbb S^n_+$ of $\p \mathbb S^n_+$, so that $B_g(p,\rho)\subset U:=\mathbb S^n_+\setminus \overline N$, and $U\setminus  \overline {B_g(p,\rho/2)}$ is not $V$-static.  We also note that the proof of Proposition \ref{prop-Min-Oo} can be applied to any counterexample of the Min-Oo conjecture which is not $V$-static and contains a domain with constant scalar curvature.  We can connect to such a space one of the examples satisfying conditions (1) and (3) of Theorem \ref{thm:brendle-marques-neves}.  After applying Theorem \ref{glue}, we can cap off this end with a round sphere as above.  
\end{remark}

\begin{remark} \label{rem:alternativelyGL} The large-volume counterexamples to Min-Oo's conjecture can alternatively be obtained from the Brendle-Marques-Neves counterexample using the Gromov-Lawson connect-sum construction for positive scalar curvature \cite{gromlaw}. The construction in \cite{gromlaw} is local near the gluing points. One would connect two copies of the example of Brendle-Marques-Neves at points where $R(g)>n(n-1)$, applying the technique of Gromov-Lawson carefully so as to maintain the lower bound on the scalar curvature. 
\end{remark}

\begin{remark}  In the proof of Proposition  \ref{prop-Min-Oo}, 
one can start with two disjoint small balls $B_g(p_1, \rho_1)$ and $B_g(p_2, \rho_2)$ in $(\mathbb{S}^n_+, g)$ 
such that  $R(g) = n (n-1)$ on $B_g(p_i, \rho_i )$, 
 $\Delta_{g} + n$ has positive Dirichlet spectrum on $B_g(p_i, \rho_i )$, 
 $i = 1, 2$, and $g$ is not $V$-static on $U = \mathbb{S}^n_+ \setminus 
 ( \overline{B_g(p_1, \frac{\rho_1}{2})} \cup \overline{B_g(p_2, \frac{\rho_2}{2} )} )$.
 By forming the connected sum of $B_g(p_1, \rho_1) $ and $B_g(p_2, \rho_2)$ (adding a handle)  and deforming the metric on $U$,  one obtains a counterexample to Min-Oo's conjecture with non-trivial fundamental group.  One can also obtain such an example by connecting a counterexample to Min-Oo's conjecture to a non-$V$-static metric on $\mathbb S^1 \times \mathbb S^{n-1}$ or $\mathbb S^n/\Gamma$ (where $\Gamma$ is a finite subgroup of $SO(n+1)$) which has scalar curvature at least $n(n-1)$, and in some region has constant scalar curvature $n(n-1)$. The existence of such metrics on $\mathbb S^1 \times \mathbb S^{n-1}$ or $\mathbb S^n/\Gamma$ follows from results of Kazdan-Warner. In fact, it is shown in \cite{kaz-war-1, kaz-war-2} that on every closed  manifold that admits a smooth metric of positive scalar curvature, every smooth function is the scalar curvature of some smooth metric. Applying the proof of Proposition \ref{prop-Min-Oo} to such an example, one  obtains more complicated counterexamples with non-trivial topology and arbitrarily large volume. 
\end{remark}


\appendix

\section{Schauder theory} \label{sec:Schauder} 

Here we discuss interior Schauder estimates in weighted spaces, following Chru\'{s}ciel and Delay \cite[Appendix B]{cd}, for the particular example of the operator $u \mapsto P(u):= \rho^{-1}L_g\rho L_g^* u$. Note that, in local coordinates, $P(u)$ has the form $$ (n-1)\Delta^2 u + \sum\limits_{|\beta|\leq 3} b_{\beta} D^{\beta} u.$$ Recall that the weight $\rho$ is a smooth ($C^{k,\alpha}$) function that behaves like $e^{- 1/d}$ near $\partial \Omega$. It is easy to check that $||b_{\beta}||_{C^{0, \alpha}_{\phi, \phi^{4 - |\beta|} }} < \infty$. By appropriate scaling, one can obtain interior Schauder estimates on small balls near the boundary of $\Omega$ from interior Schauder estimates on balls of a fixed size for an operator whose coefficients are well controlled. The weighted H\"{o}lder norms defined in Section \ref{func-sp} are designed for this purpose. 

For simplicity, we assume that we are working in $\mathbb R^n$ with the standard metric, and that $x$ is close to $\partial \Omega$ so that $\phi(x)=d(x)^2$.  For $z\in B(0,1)$, let $y= x+\phi(x) z$, and for any $f$, let $\tilde f(z)= f(x+\phi(x)z)=f(y)$.  Then $D_z\tilde u|_z = \phi(x)D_y u|_{x+\phi(x) z}$.  We compute that
\begin{align*}
(Pu) (x+\phi(x)z) &= (n-1)\Delta^2_y u \big|_{x+\phi(x) z}+ \sum\limits_{|\beta|\leq 3} b_{\beta} D_y^{\beta} u\big|_{x+\phi(x) z}\\
&= \phi(x)^{-4} (n-1)\Delta^2_z \tilde u\big|_z+ \sum\limits_{|\beta|\leq 3} \phi(x)^{-|\beta|} \tilde b_{\beta}(z) D^{\beta}_z \tilde u \big|_z.
\end{align*}
We obtain that
$$\phi(x)^4 \widetilde{Pu}(z)= \left( (n-1) \Delta^2_z+ \sum\limits_{|\beta|\leq 3} \phi(x)^{4-|\beta|} \tilde b_{\beta}(z) D_z^{\beta}\right) \tilde u =: \tilde P \tilde u (z).$$
We see that $\tilde P$ is uniformly elliptic on $B(0,1)$ and has coefficients that are bounded in $C^{0, \alpha}$ by bounds for $\|b_{\beta}\|_{C^{0, \alpha}_{\phi, \phi^{4-|\beta|}}}$.  The standard interior Schauder estimate gives 
\begin{align*}
\|\tilde u\|_{C^{4, \alpha}(B(0, \frac{1}{4}))} & \leq C \left(\|\tilde P \tilde u \|_{C^{0,\alpha}(B(0, \frac{1}{2}))} + \|\tilde u \|_{L^2(B(0, \frac{1}{2}))}\right) \\
& \leq C \left( \phi(x)^4 \| \widetilde{Pu} \|_{C^{0,\alpha}(B(0, \frac{1}{2}))} + \|\tilde u \|_{L^2(B(0, \frac{1}{2}))}\right).
\end{align*}
Scaling back, we see that 
\begin{align*}
&\sum  \limits_{j=0}^4 \phi(x)^j \| \nabla^j_g u\|_{C^{0, \alpha}(B(x, \frac{\phi(x)}{4}))}  + \phi(x)^{4+\alpha} [\nabla_g^4 u]_{0,\alpha;B(x, \frac{\phi(x)}{4})} \\
& \leq C \left( \phi(x)^4 \| Pu \|_{C^0(B(x, \frac{\phi(x)}{2}))}+\phi(x)^{4+\alpha}[Pu]_{0,\alpha;B(x, \frac{\phi(x)}{2})} +\phi(x)^{-\frac{n}{2}} \| u \|_{L^2(B(x, \frac{\phi(x)}{2}))}\right).
\end{align*}
We can multiply this inequality by $\varphi(x)$ where $\varphi=\phi^r \rho^s$ to obtain the following weighted estimate on $\Omega$:
\begin{equation}
\|u\|_{C^{4, \alpha}_{\phi, \varphi}}\leq C (\|Pu\|_{C^{0,\alpha}_{\phi, \phi^4 \varphi}}+ \|u\|_{L^2_{\phi^{-n}\varphi^2}}).  \label{wtsch0}
\end{equation}
This estimate is similar to that in \cite[Appendix B]{cd}. Note that we impose slightly different conditions on the lower order coefficients here, and that we use a different convention for the weighted $L^2$ norms.  
As for higher regularity, we obtain similarly that 
\begin{equation}
\|u\|_{C^{k, \alpha}_{\phi, \varphi}}\leq C (\|Pu\|_{C^{k-4,\alpha}_{\phi, \phi^4 \varphi}}+ \|u\|_{L^2_{\phi^{-n}\varphi^2}})  \label{wtschk}
\end{equation}
where the constant $C$ depends on the domain, the weight, and bounds for $\|b_{\beta}\|_{C^{k-4, \alpha}_{\phi, \phi^{4-|\beta|}}}$. 


\section{Proof of Proposition \ref{prop-NT}}  \label{sec:NT} Here we sketch the proof of Proposition \ref{prop-NT}, which is similar to that of \cite[Proposition 6]{imp} and \cite[Proposition 3.6]{imp:flds}.

Recall that $[-\frac{T}{2}, \frac{T}{2}]\times \mathbb S^{n-1} \cong C_T\subset \Sigma_T$.  In the proof below, H\"{o}lder norms on $C_T$ or $Q:= [-1,1]\times \mathbb S^{n-1} \subset C_T$ are indicated with an additional subscript. 

\begin{proof} We recall that  on $\Sigma_T\setminus C_T$, $\mathcal N_T(\Psi_T)=0$, and that on $[-\frac{T}{2}+1,\frac{T}{2}-1]\times \mathbb S^{n-1}$, $\Psi_T(s, \theta)=2 R^{\frac{n-2}{2}} e^{-\frac{(n-2)T}{4}} \cosh \big( \tfrac{(n-2)s}{2}\big).$
Let $\mathring \gamma= ds^2 + g_{\mathbb S^{n-1}}$ be the standard cylindrical metric.  Then $\Psi_T$ solves the equation $(-\Delta _{\mathring \gamma} + c_n R(\mathring \gamma))( \Psi_T) =0$ on $[-\frac{T}{2}+1,\frac{T}{2}-1]\times \mathbb S^{n-1}$.  Therefore,  by \eqref{metric-est} and Lemma \ref{lem-metric-decay}, we have 
\begin{align} 
\| \Delta_{\gamma_T}f-\Delta_{\mathring \gamma}f\|_{k-2, \alpha,C_T} & \lesssim e^{-T} \cosh 2s \|f\|_{k, \alpha; C_T} \label{opest}\\
\|R(\gamma_T)-R(\mathring \gamma)\|_{k-2, \alpha, C_T} & \lesssim e^{-T} \cosh 2s \nonumber. 
\end{align}  
On $Q\cong [-1,1]\times \mathbb S^{n-1}$,  (\ref{opest}) implies 
 $\|\mathcal N_T(\Psi_T)\|_{k-2, \alpha, Q}\lesssim e^{-\frac{(n+2)T}{4}}.$
This completes the estimate on $Q$. 

We now consider $C_T\setminus Q \cong ([-\frac{T}{2}, -1]\times \mathbb S^{n-1})\cup ([1, \frac{T}{2}]\times \mathbb S^{n-1})$.   The estimates on the two components are similar. We will do one of them.  

Recall that on $[1, \frac{T}{2}]\times \mathbb S^{n-1}$ we have that $\gamma_T= \Psi_2^{-{4/(n-2)}} \gamma_2$ and $\Psi_T(s, \theta)=\chi_{1,T}(s)\psi (R e^{-s- \frac{T}{2}})+ \chi_{2,T}(s) \psi( R e^{s - \frac{T}{2}})$. Moreover, $\mathcal N_T(\Psi_2)=0$ in this region, so that 
$$\mathcal N_T(\Psi_T)= (-\Delta_{\gamma_T} +c_n R(\gamma_T))(\chi_{1,T} \Psi_1) -c_n \sigma_n \Psi_T^{\frac{n+2}{n-2}} +c_n\sigma_n \Psi_2^{\frac{n+2}{n-2}}.$$ 
We write the last two terms using $\Psi_T^{\frac{n+2}{n-2}}- \Psi_2^{\frac{n+2}{n-2}}=  \Psi_2^{\frac{n+2}{n-2}} \left( \Big ( 1+ \chi_{1,T} \frac{ \Psi_1}{\Psi_2}\Big)^{\frac{n+2}{n-2}} - 1\right).$ Since also $\frac{\Psi_1}{\Psi_2}= e^{-s(n-2)}$ and $\Psi_2^{\frac{n+2}{n-2}}= (Re^{s-\frac{T}{2}})^{\frac{n+2}{2}}$ in this region, we obtain that 
\begin{align}
\left| \Psi_T^{\frac{n+2}{n-2}}- \Psi_2^{\frac{n+2}{n-2}}  \right|\lesssim \Psi_2^{\frac{n+2}{n-2}}\cdot \frac{\Psi_1}{\Psi_2} & \lesssim e^{-s\frac{n-6}{2}} e^{-\frac{(n+2)T}{4}}. \label{nlest}
\end{align}
On $[1, \frac{T}{2}-1]\times \mathbb S^{n-1}$,   (\ref{nlest}) shows
\begin{align*}
\Big| \Psi_T^{\frac{n+2}{n-2}}  & - \Psi_2^{\frac{n+2}{n-2}}  \Big|
\lesssim \begin{cases} e^{-\frac{(n+2)T}{4}}   \qquad \qquad \textrm{for }  n > 6 \\ 
e^{-\frac{(n-2)T}{2}}  \qquad \qquad \textrm{for }  n \leq 6 \; \end{cases} 
 \end{align*}
while  on  $[\frac{T}{2}-1, \frac{T}{2}]\times \mathbb S^{n-1}$, (\ref{nlest}) gives that
$$\left| \Psi_T^{\frac{n+2}{n-2}}- \Psi_2^{\frac{n+2}{n-2}}  \right|\lesssim  e^{-\frac{(n-2)T}{2}}.$$
The required H\"{o}lder bounds of $\|\Psi_T^{\frac{n+2}{n-2}}- \Psi_2^{\frac{n+2}{n-2}} \|_{k-2,\alpha,C_T}$ follow analogously. 

It remains to estimate  $\|(-\Delta_{\gamma_T} + c_n R(\gamma_T))(\chi_{1,T}\Psi_1)\|_{k-2,\alpha,C_T}$.   
We first estimate on $[1, \frac{T}{2}-1]\times \mathbb S^{n-1}$, where  $\chi_{1,T}=1$.  Using this along with (\ref{opest}) and the fact that $\Psi_1$ is in the kernel of the conformal Laplacian on the cylinder, we obtain  
\begin{align*}
\Big|(-\Delta_{\gamma_T} +  & c_n R(\gamma_T))(\chi_{1,T}\Psi_1) \Big| \\ &= \Big|  (-\Delta_{\gamma_T} + c_n R(\gamma_T))(\Psi_1)- (-\Delta_{\mathring \gamma} + c_n R(\mathring \gamma))(\Psi_1)   \Big| \\& \lesssim e^{-T} \cosh (2s)\|\Psi_1\|_{C^2(C_T)}\\
 &\lesssim e^{s(2-\frac{n-2}{2})}e^{-T}e^{-\frac{(n-2)T}{4}}\\
 & \lesssim \begin{cases} e^{-T}e^{-\frac{(n-2)T}{4}} = e^{ - \frac{(n+2)}{4} T } \qquad \qquad \;\;\;\;\;\textrm{for } n > 6  \\e^{(1-\frac{n-2}{4})T} e^{-T}e^{-\frac{(n-2)T}{4}}= e^{-\frac{(n-2)T}{2}} \quad\;  \textrm{for }  n \leq 6  . \end{cases}
 \end{align*}
 For $s\in [\frac{T}{2}-1, \frac{T}{2}]$,  using that $\Psi_1=   (Re^{-s})^{\frac{n-2}{2}}e^{-\frac{(n-2)T}{4}}$, we have 
\begin{align*}
\Big| & (-\Delta_{\gamma_T} +  c_n R(\gamma_T))(\chi_{1,T}\Psi_1) \Big| \\  & \lesssim e^{-T} \cosh (2s) \|\Psi_1\|_{C^2(C_T)} +  \Big| (-\Delta_{\mathring \gamma} + c_n R(\mathring \gamma))(\chi_{1,T}\Psi_1) \Big|
 \lesssim e^{-\frac{(n-2)T}{2}} \; . 
 \end{align*}
 This proves the desired $C^0$ bound in  \eqref{approxsol1} and  \eqref{approxsol2}.
 The estimate of the derivatives and the H\"{o}lder bound follow from similar reasoning.  \end{proof}

\end{document}